\def\V{{\mathbb{V}}}
\def\R{{\mathbb{R}}}
\def\S{{\mathbb{S}}}
\def\T{{\mathbb{T}}}
\begin{document}

\title{Spectral operators of matrices: semismoothness and characterizations of the generalized Jacobian 
}


\author{Chao Ding%
	\thanks{Institute of Applied Mathematics, Academy of Mathematics and Systems Science, Chinese Academy of Sciences, Beijing,  P.R. China (\email{dingchao@amss.ac.cn}). The research of this author was supported  by the National Natural Science Foundation of China under projects
		No. 11671387 and No. 11531014.}
	\and
	Defeng Sun%
	\thanks{Department of Applied Mathematics, The Hong Kong Polytechnic University, Hong Kong  (\email{defeng.sun@polyu.edu.hk}). The research of this author was supported in part by a start-up research grant from the Hong Kong
Polytechnic University.}
	\and
Jie Sun%
	\thanks{Department of Mathematics and Statistics, Curtin University, Australia (\email{Jie.Sun@curtin.edu.au}).}%
	\and
Kim-Chuan Toh%
	\thanks{Department of
Mathematics,  and Institute of Operations Research and Analytics,
 National University of Singapore,
Singapore (\email{mattohkc@nus.edu.sg}).}%
}


\maketitle

\begin{abstract}
Spectral operators of matrices proposed recently in [C. Ding, D.F. Sun, J. Sun, and K.C.  Toh, Math. Program. {\bf 168}, 509--531 (2018)] are a class of matrix valued functions, which map matrices to matrices by applying a vector-to-vector function to all eigenvalues/singular values of
 {the underlying} matrices. Spectral operators play a crucial role in the study of various applications involving matrices such as 
matrix optimization problems (MOPs)  {that include semidefinite programming
as one of the most important example classes}. In this paper, we will  study more fundamental first- and second-order properties of spectral operators, including the Lipschitz continuity, $\rho$-order B(ouligand)-differentiability ($0<\rho\le 1$), $\rho$-order G-semismoothness ($0<\rho\le 1$), and characterization of generalized Jacobians.
\end{abstract}

\begin{keywords}
	spectral operators, matrix valued functions, semismoothness, generalized Jacobian
\end{keywords}

\begin{AMS}
	90C25, 90C06, 65K05, 49J50, 49J52
\end{AMS}

\section{Introduction}
\label{section:introduction}

Spectral operators of matrices introduced recently in \cite{DSSToh18} are a class of matrix valued functions defined on 
 {a given}
real Euclidean vector space ${\cal X}$ of real/complex matrices over the scalar field of real numbers $\R$.
 {Unlike
 the well-studied classical  matrix functions} \cite[Chapter 9]{GVanLoan12}, \cite[Chapter 6]{HJohnson94}, \cite{Bhatia97,Higham08,HBenisrael73}, which are L\"{o}wner's operators  {generated by applying a single-variable function to each of the eigenvalues/singular values of the underlying matrices},
  {the} spectral operators introduced in \cite{DSSToh18} generate matrix valued functions by applying a vector-to-vector function to all eigenvalues/singular values of  {the underlying}
 matrices (see Definition \ref{def:def-spectral-op} for details).

Besides its intrinsic theoretical interest in linear algebra, spectral operators play a crucial role in the study of a class of optimization problems 
 {known as}
matrix optimization problems (MOPs), which include many important problems such as  matrix norm approximation, matrix completion, rank minimization, graph theory, machine learning, and etc.  \cite{GT94,Toh97,TT98,RFP07,LOverton96,CanRec08,CanTao09,CLMW09,CSPW09,WMGR09,CFP03,Lovasz79,Dobrynin04,KLVempala97,FPSTseng13,LZLi15,LZLu17,ZSo17,LLCDai18}. In particular, for a given unitarily invariant proper closed convex function $f:{\cal X}\to (-\infty,\infty]$, the spectral operator  {that is closely} related to MOPs is the proximal mapping \cite{Rockafellar70} of $f$ at $X$, which is defined by
\begin{equation}\label{eq:def-proximal}
P_{f}(X):={\rm argmin}_{Y\in{\cal X}}\left\{ f(Y)+\frac{1}{2}\|Y-X\|^{2}\right\}, \quad X\in{\cal X},
\end{equation}
where ${\cal X}$ is either the real vector subspace $\S^m$ of $m\times m$ real symmetric or complex Hermitian matrices, or the real vector subspace $\V^{m\times n}$ of $m\times n$ (assume $m\le n$) real/complex matrices. Among different MOP applications, semidefinite programming (SDP) \cite{Todd01}
 {is arguably
one of the most influential 
classes of}
problems and its importance has been well-recognized by researchers even beyond the optimization community. Recent exciting progress has been made  {both in the design of} efficient numerical methods for solving large scale SDPs \cite{ZSToh10,YSToh15} and 
 {in the study of}
second-order variational analysis of SDP problems \cite{DSYe14,Sun06,CSun08,MNRockafellar15}, in which the first- and second-order properties of the special spectral operator, the projection operator over the positive semidefinite matrix cone \cite{SSun02,SSun08}, have played an essential role.  However, for the general MOPs arising recently from different fields, the classical theory developed for L\"{o}wner's operators has become inadequate  {to cope with the new theoretical developments and needs}.
 {Beyond the spectral operators of matrices arising from} proximal mappings, more general spectral operators  {indeed have} played a pivotal role in many other MOP applications \cite{MSPan12}. Therefore, the study of the general spectral operators will provide the necessary foundations for both computational and theoretical study of the general MOPs.  In particular, the first- and second-order properties of spectral operators obtained in \cite{DSSToh18} including the well-definedness, continuity, directional differentiability, and Fr\'{e}chet-differentiability are of fundamental importance in the study of MOPs \cite{Ding12,LSToh12,CLSToh12,CDZhao17}.

In this paper, we will follow the path set in  \cite{DSSToh18} to conduct  extensive  {theoretical}
 studies on spectral operators.  More first- and second-order properties of spectral operators will be discussed in  {depth. These include} the Lipschitz continuity, $\rho$-order B(ouligand)-differentiability ($0<\rho\le 1$), $\rho$-order G-semismoothness ($0<\rho\le 1$), and characterization of generalized Jacobians. In particular, we will study the semismoothness \cite{Mifflin77,QSun93} of spectral operators, which is one of  {the most} important properties for both algorithm design and theoretical study of the general MOPs. Historically, the semismoothness of vector-valued functions  {had} played a crucial role in constructing nonsmooth and smoothing Newton method for nonlinear equations and related problems. In fact, it is shown
  {in} \cite{QSun93,Qi93,PQi93} that  the (strong) semismoothness is the key property for the local (quadratic) superlinear convergence of the Newton method. Nowadays the semismooth Newton method has became one of  {the most important techniques} in optimization \cite{IKSolodov13,UUBrztzke17,ZSToh10,YSToh15,LSToh18,LSToh18b,YSToh18}. In particular, the several semismooth Newton based  methods  {have been} proposed for solving  {various} large-scale optimization problems
  {in  machine learning} applications such as the lasso, fused lasso and convex clustering problems,
  {and they have significantly outperformed} a number of state-of-the-art solvers in terms of
  efficiency  and robustness \cite{LSToh18,LSToh18b,YSToh18}. For MOPs, the semismoothness of the special spectral operator: the projection operator over the SDP cone,
   {has played a key role in the development of the semismooth Newton based
  augmented Laggrangian method implemented in} the software package SDPNAL \cite{ZSToh10} and its enhanced version SDPNAL+ \cite{YSToh15} for solving large-scale SDP problems. Therefore,
 {based on   these recent progress}, we believe that the results on the semismoothness of spectral operators obtained in this paper will lay a  foundation for the research on general MOPs. For the proximal mapping \eqref{eq:def-proximal}, one can obtain its semismooth property by employing the  results
 {recently developed based}
on semi-algebraic geometry \cite{BCRoy87,Coste99}. It is shown in \cite{BDLewis09,Ioffe08}  {that}
 {locally Lipschitz continuous tame functions} (e.g., the proximal mapping \eqref{eq:def-proximal}) are semismooth. For more recent developments on semi-algebraic geometry in optimization, see \cite{ABSvaiter13,DIoffe15,DILewis15,CHien15,LMPham15,BPauwels16,BHPauwels18} and  {the} references therein.  It is worth to note that unlike our approach, by  {just} employing its tameness, one may not able to obtain  {the explicit formulas of the directional derivative and},   {more importantly},  the strong semismoothness of the proximal mapping (see Section \ref{section:semismoothness} for details).

Another fundamental property, which we will study, is the characterization of the Clarke generalized Jacobians \cite{Clarke83} of the locally Lipschitz continuous spectral operators. This is an important theoretical topic in the second-order variational analysis, which is crucial for the study of many perturbation properties of MOPs such as the strong regularity \cite{PSSun03,Sun06,CSun08}, and full and tilt stability \cite{MNRockafellar15,MRockafellar12}. In addition, for the software packages SDPNAL and SDPNAL+, due to the explicit characterization of the Clarke generalized Jacobian of  the projection operator over the positive semidefinite matrix cone, it becomes possible to  {exploit} the second order sparsity of the SDP problems  {inherited from}  the sparse structure of the generalized Jacobian of the reformulated semismooth equations. The second order sparsity  {can substantially reduce} the computational cost of solving the resulting linear systems
 {associated with the semismooth Newton directions. Indeed the efficient computation of
the semismooth Newton directions is} one of the biggest computational challenges in designing efficient
 {second-order} numerical methods for solving large-scale problems.  {To summarize,} we believe that the fundamental results  {obtained in this paper}, especially the second-order properties such as the semismoothness and the  Clarke generalized Jacobian of spectral operators, are of  importance in both  {the} computational and theoretical study of general MOPs.

%


The remaining parts of this paper are organized as follows. In
Section \ref{subsec:nonsymmetric}, we  {briefly} review several preliminary properties of spectral operators of matrices. We study
the Lipschitz continuity and Bouligand-differentiability of spectral operators defined on  {a} single matrix space $\V^{m\times n}$ in Sections \ref{section:Lip} and \ref{section:B-diff}, respectively. Then, the $G$-semismoothness and  characterization of the Clarke generalized Jacobians  of spectral operators are presented in Section \ref{section:semismoothness} and \ref{section:Clake g-J}. In Section \ref{section:extension}, we extend the corresponding results to spectral operators defined on the Cartesian product of several matrix spaces and the smoothing spectral operators. 
We make some final remarks in Section \ref{sect:remarks}.

Below are some common notations and symbols to be used  {later in the paper}:
\begin{itemize}
\item For any $X\in\V^{m\times n}$, we denote by $X_{ij}$ the $(i,j)$-th entry of $X$ and $x_{j}$ the $j$-th column
of $X$.  Let ${I}\subseteq \{1,\ldots, m\}$ and  ${J}\subseteq \{1,\ldots, n\}$ be two index sets. We use
 $ X_{J}$ to  denote the sub-matrix of $X$ obtained by  removing all the columns of $X$ not in   $J$ and $X_{{I}{J}}$ to
  denote the $|I|\times|J|$ sub-matrix of $X$ obtained  by removing all the rows of $X$ not in $I$ and all the columns of $X$ not in  $J$.

\item For  $X\in\V^{m\times m}$, ${\rm diag}(X)$ denotes the column vector   consisting of  all the
diagonal entries of $X$ being arranged from
the first to the last. For  $x\in\R^m$,
${\rm Diag}(x)$  denotes  the {$m\times m$} diagonal matrix whose $i$-th diagonal entry is $x_i$, $i=1,\ldots,m$.
\item We use $``\circ"$ to denote the usual Hadamard product between two matrices, i.e., for any two matrices $A$ and $B$ in $\V^{m\times n}$ the $(i,j)$-th entry of $  Z:= A\circ B \in \V^{m\times n}$ is
	 {$Z_{ij}=A_{ij} B_{ij}$.}
	
	 \item For any $X\in\S^m$, we use $\lambda:\S^m\to \R^m$ to denote the mapping of the ordered eigenvalues of a Hermitian matrix $X$ satisfying $\lambda_1(X)\ge \lambda_2(X)\ge \ldots \ge \lambda_m(X)$. For any $X\in\V^{m\times n}$, we use $\sigma:\V^{m\times n}\to \R^m$ to denote  the mapping of the ordered singular values of $X$ satisfying $\sigma_1(X)\ge \sigma_2(X)\ge \ldots \ge \sigma_m(X)\ge 0$.

	\item Let ${\mathbb O}^{p}$ ($p=m,n$) be the set of  $p\times p$ orthogonal/unitary matrices.
	 {We
	denote $\mathbb{P}^{p}$ and $\pm\mathbb{P}^{p}$ to be} the sets of all $p\times p$ permutation matrices and signed permutation matrices, respectively.  For any $Y\in\S^m$ and $Z\in\V^{m\times n}$, we use ${\mathbb O}^{m}(Y)$ to denote the set of all  orthogonal  matrices whose columns form an orthonormal basis of eigenvectors of $Y$, and use ${\mathbb O}^{m,n}(Z)$ to denote the set of all pairs of orthogonal matrices $(U,V)$, where the columns of $U$ and $V$ form a compatible set of orthonormal left and right singular vectors for $Z$, {respectively}.

\end{itemize}


\section{Spectral operators of matrices}
\label{subsec:nonsymmetric}

The general spectral operators of matrices introduced by \cite{DSSToh18} are defined on the Cartesian product
of several  real or complex matrix spaces. In order to  {summarize} the properties of spectral operators, we first introduce some definitions and notations, which are needed in the subsequent analysis.

Let $s$ be a positive integer and $0\leq s_{0} \leq s$ be a nonnegative integer. For given positive integers $m_{1},\ldots,m_{s}$ and $n_{s_{0}+1},\ldots,n_{s}$, define the real vector space ${\cal X}$ by
\begin{equation}\label{eq:space-X}
{\cal X}:=\S^{m_{1}} \times\ldots\times\S^{m_{s_{0}}}\times\V^{m_{s_{0}+1}\times n_{s_{0}+1}} \times\ldots\times\V^{m_{s}\times n_{s}}.
\end{equation}
Without loss of generality, we assume that $m_{k}\leq n_{k}$, $k=s_{0}+1,\ldots,s$. For any $X=(X_1,\ldots,X_s)\in{\cal X}$, we have for $1\le k\le s_0$, $X_k\in\S^{m_k}$  and $s_0+1\le k\le s$, $X_k\in\V^{m_k\times n_k}$.
Denote
	\begin{equation}\label{eq:space-Y}
	{\cal Y}:=\R^{m_1}\times\ldots\times\R^{m_{s_0}}\times\R^{m_{s_0}}\times\ldots\times \R^{m_s}.
	\end{equation}
For any $X\in{\cal X}$, define $\kappa(X)\in{\cal Y}$ by $\kappa(X):=\left(\lambda(X_1),\ldots,\lambda(X_{s_0}),\sigma(X_{s_0+1}),\ldots,\sigma(X_s)\right)$.
Define the set ${\cal P}$ by $$	{\cal P}:=\left\{\left(Q_{1}, \ldots,Q_{s}\right)\mid Q_{k}\in\mathbb{P}^{m_{k}},\ 1\leq k\leq s_{0}\ {\rm and}\ Q_{k}\in\pm\mathbb{P}^{m_{k}},\ s_{0}+1\leq k\leq s\right\}.$$
Let $g:{\cal Y}\to{\cal Y}$ be a given mapping. For any    $x=(x_1,\dots,x_s)\in  {\cal Y}$ with $x_k\in\R^{m_{k}}$,   we write $g(x)\in{\cal Y}$ in the form $g(x)=\left(g_{1}(x), \ldots,g_{s}(x)  \right)$ with  $g_{k}(x)\in\R^{m_{k}}$ for $1\le k\le s$.
\begin{definition}\label{def:mixed_symmetric}
		The given mapping $g:{\cal Y}\to{\cal Y}$  is said to be {\it mixed symmetric}, with respect to ${\cal P}$, at     $x=(x_1,\dots,x_s)\in {\cal Y}$ with $x_k\in\R^{m_{k}}$, if
		\begin{equation}\label{eq:def-symmetric}
			g(Q_1x_1,\ldots,Q_sx_s)=\left(Q_1g_1(x),\ldots,Q_{s}g_{s}(x)\right)\quad \forall\,\left(Q_{1}, \ldots,Q_{s}\right)\in{\cal P} .
		\end{equation}
		The mapping $g$ is said to be mixed symmetric,  with respect to ${\cal P}$,  over a set ${\cal D} \subseteq {\cal Y}$ if \eqref{eq:def-symmetric} holds for every $x\in{\cal D}$. We call $g$ a {\it mixed symmetric} mapping,  with respect to ${\cal P}$,  if (\ref{eq:def-symmetric}) holds for every $x\in {\cal Y}$.
	\end{definition}

	Note that for each $k\in\{1,\ldots,s\}$, the function value $g_k(x)\in\R^{m_k}$ is dependent on all $x_1,\ldots,x_s$. {When there is no danger of confusion}, in later discussions  we often   drop
	 {the phrase} ``with respect to ${\cal P}$" from    Definition \ref{def:mixed_symmetric}.
	Let ${\cal N}$ be a given nonempty set in ${\cal X}$. Define $\kappa_{\cal N}: =\left\{ \kappa(X)\in{\cal Y}\mid   X \in {\cal N}\right\}$. The following definition of the spectral operator with respect to  {a}  mixed symmetric mapping $g$ is given by \cite[Definition 1]{DSSToh18}.
	\begin{definition}\label{def:def-spectral-op}
		Suppose that  $g:{\cal Y}\to{\cal Y}$  is   mixed  symmetric on $\kappa_{\cal N}$.
		The spectral operator $G:{\cal N}\to{\cal X}$ with respect to   $g$ is defined as $G(X):=\left(G_{1}(X),\ldots,G_{s}(X)\right)$ for $X=(X_1,\ldots,X_s)\in{\cal N}$  {such that}
		\begin{equation*}\label{eq:def-Gk-two-parts}
			G_{k}(X):=\left\{  \begin{array}{ll}
				P_{k}{\rm Diag}\big(g_{k}(\kappa(X))\big)P_{k}^\T  &  \ \ \mbox{if $1\le k\le s_{0}$,}
				\\[3pt]
				U_{k}\left[{\rm Diag}\big(g_{k}(\kappa(X))\big)\quad 0\right]V_{k}^\T  &\ \ \mbox{if $s_{0}+1\le k\le s$,}
			\end{array} \right.
		\end{equation*} where $P_{k}\in{\mathbb O}^{m_{k}}(X_{k})$, $1\leq k\leq s_{0}$, $(U_{k},V_{k})\in{\mathbb O}^{m_{k},n_{k}}(X_{k})$, $s_{0}+1\leq k\leq s$.
	\end{definition}
	
For the well-definedness, continuity and F(r\'{e}chet)-differentiability of spectral operators, one may refer to \cite{DSSToh18} for  details. It is worth mentioning that for the case that ${\cal X}\equiv\S^m$ (or $\V^{m\times n}$) and $g$ has the form $g(y)=(h(y_1),\ldots,h(y_m))\in\R^m$ with $y_i\in\R$ for some given scalar valued   {function} $h:\R\to\R$, the corresponding spectral operator $G$ is just the  L\"{o}wner operator
 {coined in}  \cite{SSun08}  in {recognition} of L\"{o}wner's original contribution on this topic in  \cite{Lowner34} (or the L\"{o}wner non-Hermitian operator \cite{Yang09} if $h(0) =0$). In \cite{Yang09}, Yang studied several important first and second order properties of the L\"{o}wner non-Hermitian operator, including its F-differentiability and the explicit derivative formula (the equivalent form also can be found in \cite{Noferini17}).

	
	Next, we will focus on the study of spectral operators for the case that ${\cal X}\equiv\V^{m\times n}$. The corresponding extensions for  the spectral operators defined on the general Cartesian product  of several matrix spaces will be presented  in Section \ref{section:extension}.
	
	Let ${\cal N}$ be a given nonempty open set in $\V^{m\times n}$. Suppose that $g:\R^m\to\R^m$ is  mixed symmetric with respect to ${\cal P}\equiv\pm{\mathbb P}^m$ (i.e., absolutely symmetric), on an open set $\hat{\sigma}_{ {\cal N}}$ in $\R^{m}$ containing $\sigma _{\cal N}:=\left\{\sigma(X)\mid X\in{\cal N}\right\}$. The spectral operator $G:{\cal N}\to\V^{m\times n}$ with respect to $g$  defined in Definition \ref{def:def-spectral-op} then takes the form of
	$$
	G(X)=U\left[{\rm Diag}(g(\sigma(X)))\quad 0\right]V^\T,\quad  X\in{\cal N},
	$$
	where $(U,V)\in{\mathbb O}^{m,n}(X)$. For  {a} given  $\overline{X}\in{\cal N}$, consider the singular value decomposition (SVD)    {of} $\overline{X}$, i.e.,
	\begin{equation}\label{eq:Y-eig-Z-SVD}
		\overline{X}=\overline{U}\left[ \Sigma(\overline{X})\quad  0 \right]\overline{V}^\T  ,
	\end{equation} where $\Sigma(\overline{X})$ is an $m\times m$  diagonal matrix whose $i$-th diagonal entry is $\sigma_i(\overline{X})$, $\overline{U}\in{\mathbb O}^m$ and $\overline{V}=\left[ \overline{V}_{1} \quad  \overline{V}_{2} \right] \in{\mathbb O}^{n}$ with
	$\overline{V}_{1}\in\V^{n\times m}$ and
	$\overline{V}_{2}\in\V^{n\times (n-m)}$.
	
We end this section by further introducing some necessary notations and results, which are used in later discussions. Let $\overline{\sigma}:=\sigma(\overline{X})\in\R^{m}$. We  use $\overline{\nu}_{1}>\overline{\nu}_{2}>\ldots>\overline{\nu}_{r}>0$ to denote the nonzero distinct singular values of $\overline{X}$.
	Let $a_l$, $l=1,\ldots,r$, $a$, $b$ and $c$ be the index sets defined by
	\begin{equation}\label{eq:def-a-b-c-ak-nonsymmetric}
	\begin{array}{l}
		a_l:=\{i\mid\sigma_{i}(\overline{X})=\overline{\nu}_{l}, \ 1\le i\le m\}, \quad l=1,\ldots,r, \qquad a:=\{i \mid \sigma_{i}(\overline{X})>0,\ 1\le i\le m\} , \\[3pt]
		b:= \{i\mid \sigma_{i}(\overline{X})=0, \ 1\le i\le m\} \quad  {\rm and} \quad c:=\{m+1,\ldots,n\} .
	\end{array}
	\end{equation}
Denote $\bar{a}:=\{1,\ldots,n\}\setminus a$. For each $  i\in \{1,\ldots,m\}$, we also define $l_{i}(\overline{X})$ to be the number of singular values which are equal to $\sigma_ i(\overline{X})$ but are ranked  before $i$ (including $i$), and $\tilde{l}_{i}(\overline{X})$ to be the number of singular values which are equal to $\sigma_i(\overline{X})$ but are ranked  after $i$ (excluding $i$),  i.e.,
	define $l_{i}(\overline{X})$ and  $\tilde{l}_{i}(\overline{X})$  such that
	\begin{eqnarray}
		&&\sigma_{1}(\overline{X})\geq\ldots\geq\sigma_{i-l_{i}(\overline{X})}(\overline{X})>\sigma_{i-l_{i}(\overline{X})+1}(\overline{X})=\ldots=\sigma_{i}(\overline{X})=\ldots=\sigma_{i+\tilde{l}_{i}(\overline{X})}(\overline{X})\nonumber \\[3pt]
		&>&\sigma_{i+\tilde{l}_{i}(\overline{X})+1}(\overline{X})\geq\ldots\geq\sigma_{m}(\overline{X}) .\label{eq:onsymmetric-l_i}
	\end{eqnarray}
	In later discussions, when the dependence of $l_{i}$ and $\tilde{l}_{i}$ on $\overline{X}$
is clear  from the context, we often drop $\overline{X}$ from these notations for   convenience. We define two linear  matrix operators
	$S :\V^{p\times p}\to \S^{p}$, $T :\V^{p\times p}\to\V^{p\times p}$  by
	\begin{equation}\label{eq:maps-ST}
		S(Y):=\frac{1}{2}(Y+Y^\T ),  \quad
		T(Y):=\frac{1}{2}(Y-Y^\T ), \quad Y\in\V^{p\times p} .
	\end{equation}
For any given $X\in{\cal N}$, let $\sigma=\sigma(X)$. For the mapping $g$, we define three matrices ${\cal E}^0_{1}({\sigma}),{\cal E}^0_{2}({\sigma})\in\R^{m\times m}$ and ${\cal F}^0({\sigma})\in\R^{m\times(n-m)}$ (depending on $X\in{\cal N}$) by
	\begin{eqnarray}
		({\cal E}^0_{1}({\sigma}))_{ij} &:=&
		\left\{ \begin{array}{ll}  \displaystyle{(g_i(\sigma)-g_j({\sigma}))/(\sigma_{i}-\sigma_{j})} & \mbox{if $\sigma_{i}\neq\sigma_{j}$} ,\\[3pt]
			0 & \mbox{otherwise} ,
		\end{array} \right.  \quad i,j\in\{1,\ldots,m\} ,
		\label{eq:def-matric-E1}
		\\
		({\cal E}^0_{2}({\sigma}))_{ij} &:=&
		\left\{ \begin{array}{ll}  \displaystyle{(g_i({\sigma})+g_j({\sigma}))/(\sigma_{i}+\sigma_{j}}) & \mbox{if $\sigma_{i}+\sigma_{j}\neq 0$} ,\\[3pt]
			0 & \mbox{otherwise} ,
		\end{array} \right.  \quad i,j\in\{1,\ldots,m\} ,
		\label{eq:def-matric-E2}
		\\
		({\cal F}^0({\sigma}))_{ij} &:=&
		\left\{\begin{array}{ll}
			\displaystyle{g_i({\sigma})/\sigma_{i}} & \mbox{if $\sigma_{i}\neq 0$} ,\\[3pt]
			0 & \mbox{otherwise},
		\end{array}\right. \quad  i\in\{1,\ldots,m\},\quad j\in\{1,\ldots, n-m\} .
		\label{eq:def-matric-F}
	\end{eqnarray}
	When the dependence of ${\cal E}^0_{1}({\sigma})$, ${\cal E}^0_{2}({\sigma})$ and ${\cal F}^0({\sigma})$  on $\sigma$ is clear from the context, we often drop ${\sigma}$ from these notations. In particular, let $\overline{\cal E}^0_{1}$, $\overline{\cal E}^0_{2}\in\V^{m\times m}$ and $\overline{\cal F}^0\in\V^{m\times(n-m)}$ be the matrices defined by (\ref{eq:def-matric-E1})-(\ref{eq:def-matric-F}) with respect to $\overline{\sigma}=\sigma(\overline{X})$. Since $g$ is absolutely symmetric at $\overline{\sigma}$, we know from \cite[Proposition 1]{DSSToh18} that for all $i\in a_l$, $1\leq l\leq r$, the function values $g_i(\overline{\sigma})$ are the same (denoted by $\bar{g}_l$). Therefore, for any $X\in{\cal N}$, we are able to decompose  $G$ into two parts, i.e.,
	\begin{equation}\label{eq:def-GS}
		G_{S}({X}):=\sum_{l=1}^{r}\bar{g}_{l}{\cal U}_{l}(X) \quad {\rm and}\quad G_{R}(X):=G(X)-G_{S}(X) ,
	\end{equation}
	where ${\cal U}_{l}(X):=\sum_{i\in a_{l}}u_{i}v_{i}^\T$ with ${\mathbb O}^{m,n}(X)$.	 It follows from \cite[Lemma 1]{DSSToh18} that there exists an open neighborhood ${\cal B}$ of $\overline{X}$ in ${\cal N}$ such that $G_{S} $ is twice continuously differentiable on ${\cal B}$, and for any $\V^{m\times n}\ni H\to 0$,
	\begin{equation}\label{eq:Gs-diff-formula}
	G_{S}(\overline{X}+H)-G_{S}(\overline{X})=G'_{S}(\overline{X})H+O(\|H\|^{2})	
	\end{equation}
with
		\begin{equation}\label{eq:Gs'-formula}
			G'_{S}(\overline{X})H=\overline{U}\big[\overline{\cal E}^0_1\circ S(\overline{U}^\T H\overline{V}_{1})+\overline{\cal E}^0_2\circ T(\overline{U}^\T H\overline{V}_{1}),
			\quad \overline{\cal F}^0\circ (\overline{U}^\T H\overline{V}_{2})  \big]\overline{V}^\T.
		\end{equation}
In other words, in an open neighborhood of $\overline{X}$, $G_S$ can be regarded as a ``smooth part" of $G$ and $G_R$ can be regarded as the remaining  ``nonsmooth part" of $G$. As we will see in  later developments, this decomposition \eqref{eq:def-GS}  {can simplify} many of our proofs.

	\section{Lipschitz continuity}\label{section:Lip}
	
	In this section,
	we analyze the local Lipschitz continuity of  the spectral operator $G$ defined on a nonempty open set ${\cal N}$. Let  $\overline{X}\in{\cal N}$ be given. Assume that $g$ is locally Lipschitz continuous near $\overline{\sigma}=\sigma(\overline{X})$ with module $L>0$. Therefore, there exists a positive constant $\delta_{0}>0$ such that
	\[
	\|g(\sigma)-g(\sigma')\|\leq L\|\sigma-\sigma'\| \quad \forall\, \sigma,\sigma'\in B(\overline{\sigma},\delta_{0}):=\left\{y\in\hat{\sigma}_{ {\cal N}}\mid \|y-\overline{\sigma}\|\leq \delta_0\right\} .
	\]
	By using the absolutely symmetric property of $g$ on $\hat{\sigma}_{ {\cal N}}$, we obtain the following simple observation.
	
	\begin{proposition}\label{prop:gigj}
	There exist a positive constant $L'>0$ and a positive constant $\delta>0$ such that for any $\sigma\in B(\overline{\sigma},\delta)$,
	\begin{eqnarray}
	|g_{i}(\sigma)-g_{j}(\sigma)|&\leq& L'|\sigma_{i}-\sigma_{j}|\quad \forall\, i,j\in\{1,\ldots, m\},\ i\neq j,\;\; \sigma_{i}\neq \sigma_{j} ,\label{eq:gigj-1}\\[3pt]
	|g_{i}(\sigma)+g_{j}(\sigma)|&\leq& L'|\sigma_{i}+\sigma_{j}|\quad \forall\, i,j\in\{1,\ldots, m\},\;\;  \sigma_{i}+\sigma_{j}>0 ,\label{eq:gigj-2}\\[3pt]
	|g_{i}(\sigma)|&\leq& L'|\sigma_{i}|\quad \forall\, i\in\{1,\ldots,m\},\;\; \sigma_{i}>0\label{eq:gigj-3} .
	\end{eqnarray}
	\end{proposition}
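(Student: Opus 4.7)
The plan is to exploit the absolute symmetry of $g$ (with respect to signed permutations $\pm\mathbb{P}^m$) to reduce each of the three inequalities to a direct application of the local Lipschitz estimate. For each inequality, I would pick a specific signed permutation $Q \in \pm\mathbb{P}^m$ so that, when the pair $(\sigma_i,\sigma_j)$ is interchanged (and/or sign-reversed), the identity $g(Q\sigma)=Q g(\sigma)$ converts the left-hand side into $\|g(\sigma)-g(Q\sigma)\|$ (up to a factor of $\sqrt{2}$), and the right-hand side into $\|\sigma-Q\sigma\|$. Concretely: for \eqref{eq:gigj-1}, take $Q$ to be the transposition of coordinates $i$ and $j$, so $g_i(Q\sigma)=g_j(\sigma)$ and all other coordinates of $g(Q\sigma)$ coincide with those of $g(\sigma)$, yielding $\sqrt{2}\,|g_i(\sigma)-g_j(\sigma)|=\|g(\sigma)-g(Q\sigma)\|$ and $\sqrt{2}\,|\sigma_i-\sigma_j|=\|\sigma-Q\sigma\|$. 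For \eqref{eq:gigj-2}, I would use the signed transposition that maps $\sigma_i \mapsto -\sigma_j$, $\sigma_j \mapsto -\sigma_i$, which similarly produces $\sqrt{2}\,|g_i(\sigma)+g_j(\sigma)|=\|g(\sigma)-g(Q\sigma)\|$ and $\sqrt{2}\,|\sigma_i+\sigma_j|=\|\sigma-Q\sigma\|$. For \eqref{eq:gigj-3}, I would use the single sign flip $\sigma_i \mapsto -\sigma_i$, giving $2|g_i(\sigma)|=\|g(\sigma)-g(Q\sigma)\|$ and $2|\sigma_i|=\|\sigma-Q\sigma\|$.

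The main obstacle is that to apply the Lipschitz estimate, we need both $\sigma$ and $Q\sigma$ to lie in $B(\overline{\sigma},\delta_0)$; this is not guaranteed in general because, e.g., if $\overline{\sigma}_i\neq\overline{\sigma}_j$, then $Q\sigma$ is close to $Q\overline{\sigma}\neq\overline{\sigma}$. I would overcome this by a case analysis on whether the relevant entries of $\overline{\sigma}$ are degenerate:

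For \eqref{eq:gigj-1}, if $\overline{\sigma}_i=\overline{\sigma}_j$, then for $\sigma$ close to $\overline{\sigma}$ both $\sigma_i$ and $\sigma_j$ are close to the common value, hence $Q\sigma$ is again close to $\overline{\sigma}$; shrinking $\delta$ so that $Q\sigma\in B(\overline{\sigma},\delta_0)$ makes the Lipschitz estimate directly give \eqref{eq:gigj-1} with $L'=L$. If $\overline{\sigma}_i\neq\overline{\sigma}_j$, then $|\sigma_i-\sigma_j|$ is bounded below by $|\overline{\sigma}_i-\overline{\sigma}_j|/2$ for $\sigma$ sufficiently close to $\overline{\sigma}$, and $|g_i(\sigma)-g_j(\sigma)|$ is bounded above by the continuity of $g$ on $B(\overline{\sigma},\delta_0)$, so the desired bound holds with a possibly larger constant. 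For \eqref{eq:gigj-2}, the analogous split is on whether $\overline{\sigma}_i+\overline{\sigma}_j>0$ or $=0$; in the latter case both $\overline{\sigma}_i,\overline{\sigma}_j=0$ (since they are nonnegative singular values), so $Q\sigma\to\overline{\sigma}$ as $\sigma\to\overline{\sigma}$, and the Lipschitz estimate applies. For \eqref{eq:gigj-3}, split on $\overline{\sigma}_i>0$ versus $\overline{\sigma}_i=0$.

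Finally I would take $L'$ to be the maximum of the constants arising from each case (and from all index pairs $i,j$, which form a finite set), and choose $\delta\in(0,\delta_0]$ small enough that in every case where the Lipschitz route is used, $Q\sigma\in B(\overline{\sigma},\delta_0)$, and that in every case where the continuity route is used, the denominator is bounded away from zero. This choice is uniform in $i,j$ because there are only finitely many pairs to consider. The argument is short; the only mildly delicate point is bookkeeping the two routes (Lipschitz versus continuity-plus-nondegeneracy) uniformly over all index pairs.
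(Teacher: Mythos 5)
Your proposal is correct and follows essentially the same route as the paper: split each inequality according to whether the relevant entries of $\overline{\sigma}$ are degenerate, use the nondegeneracy (denominator bounded below, numerator bounded via Lipschitz continuity/boundedness of $g$ near $\overline{\sigma}$) in one case, and in the degenerate case apply the local Lipschitz estimate between $\sigma$ and the signed-permuted point $Q\sigma$, which stays in the ball precisely because the degenerate entries of $\overline{\sigma}$ coincide (or vanish), exactly as in the paper's construction of the auxiliary vectors $t$, $\hat t$, $s$. The only cosmetic difference is that you cancel the $\sqrt{2}$ factors by comparing $\|g(\sigma)-g(Q\sigma)\|$ with $\|\sigma-Q\sigma\|$ directly, whereas the paper extracts a single coordinate via $g_i(Q\sigma)=g_j(\sigma)$; both yield the same conclusion.
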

\begin{proof}
	It is easy to check that there exists a positive constant $\delta_{1}>0$ such that for any $\sigma\in B(\overline{\sigma},\delta_{1})$,
	\begin{eqnarray}
	|\sigma_{i}-\sigma_{j}| &\geq &\delta_{1}>0\quad \forall\, i,j\in\{1,\ldots,m\},\ i\neq j,
	\;\; \overline{\sigma}_{i}\neq\overline{\sigma}_{j} ,
	\label{eq:bounded-cond-1}
	\\[3pt]
	|\sigma_{i}+\sigma_{j}|&\geq& \delta_{1}>0\quad \forall\, i,j\in\{1,\ldots,m\},\;\; \overline{\sigma}_{i}+\overline{\sigma}_{j}>0 ,
	\label{eq:bounded-cond-2}
	\\[3pt]
	|\sigma_{i}|&\geq& \delta_{1}>0\quad \forall\, i\in\{1,\ldots,m\},\;\;
	 \overline{\sigma}_{i}>0 .
	\label{eq:bounded-cond-3}
	\end{eqnarray}
	Let $\delta:=\min\{\delta_{0},\delta_{1}\}>0$. Denote $\tau:=\displaystyle{\max_{i,j}}\{|g_{i}(\overline{\sigma})-g_{j}(\overline{\sigma})|, |g_{i}(\overline{\sigma})+g_{j}(\overline{\sigma})|, |g_{i}(\overline{\sigma})|\}\ge 0$, $L_{1}:=(2L\delta+\tau)/\delta$ and $L':=\max\{L_{1},\sqrt{2}L\}$. Let $\sigma$ be any fixed vector in $B(\overline{\sigma},\delta)$.
	
	Firstly, we consider the case that $i,j \in\{1,\ldots,m\}$, $i\neq j$ and $\sigma_{i}\neq\sigma_{j}$. If $\overline{\sigma}_{i}\neq \overline{\sigma}_{j}$, then from (\ref{eq:bounded-cond-1}), we know that
	\begin{eqnarray}
	|g_{i}(\sigma)-g_{j}(\sigma)|&=&|g_{i}(\sigma)-g_{i}(\overline{\sigma})+g_{i}(\overline{\sigma})-g_{j}(\overline{\sigma})+g_{j}(\overline{\sigma})-g_{j}(\sigma)|\nonumber\\
	&\leq&2\|g(\sigma)-g(\overline{\sigma})\|+\tau \leq \frac{2L\delta+\tau}{\delta}|\sigma_{i}-\sigma_{j}|= L_{1}|\sigma_{i}-\sigma_{j}|  .\label{eq:bouded-1-1}
	\end{eqnarray}
	If $\overline{\sigma}_{i}= \overline{\sigma}_{j}$, define $t\in\R^{m}$ by
	\[
	t_{p}:=\left\{ \begin{array}{ll}\sigma_{p}  & \mbox{if $p\neq i,j$,}\\[3pt]
	\sigma_{j}  & \mbox{if $p=i$},\\[3pt]
	\sigma_{i}   & \mbox{if $p=j$},
	\end{array} \right. \quad p=1,\ldots,m .
	\] Then, we have $\|t-\overline{\sigma}\|=\|\sigma-\overline{\sigma}\|\leq \delta$. Moreover, since $g$ is absolutely symmetric on $\hat{\sigma}_{ {\cal N}}$, we have $g_{i}(t)=g_{j}(\sigma)$. Therefore
	\begin{equation}\label{eq:bouded-1-2}
	|g_{i}(\sigma)-g_{j}(\sigma)|=|g_{i}(\sigma)-g_{i}(t)| \leq \|g(\sigma)-g(t)\|\leq L\|\sigma-t\|=\sqrt{2}L|\sigma_{i}-\sigma_{j}| .
	\end{equation}
	Thus, the inequality (\ref{eq:gigj-1}) follows from (\ref{eq:bouded-1-1}) and (\ref{eq:bouded-1-2}) immediately.
	
	Secondly, consider the case $i,j\in\{1,\ldots, m \}$ and $\sigma_{i}+\sigma_{j}>0$. If $\overline{\sigma}_{i}+\overline{\sigma}_{j}>0$, it follows from (\ref{eq:bounded-cond-2}) that
	\begin{eqnarray}
	|g_{i}(\sigma)+g_{j}(\sigma)|&=&|g_{i}(\sigma)-g_{i}(\overline{\sigma})+g_{i}(\overline{\sigma})+g_{j}(\overline{\sigma})-g_{j}(\overline{\sigma})+g_{j}(\sigma)|\nonumber\\
	&\leq& 2\|g(\sigma)-g(\overline{\sigma})\|+\tau \leq\frac{2L\delta+\tau}{\delta}|\sigma_{i}+\sigma_{j}|=L_{1}|\sigma_{i}+\sigma_{j}|  .\label{eq:bouded-2-1}
	\end{eqnarray} If $\overline{\sigma}_{i}+\overline{\sigma}_{j}=0$, i.e., $\overline{\sigma}_{i}=\overline{\sigma}_{j}=0$, define the vector $\hat{t}\in\R^{m}$ by
	\[
	\hat{t}_{p}:=\left\{ \begin{array}{ll}\sigma_{p}  & \mbox{if $p\neq i,j$,}\\
	-\sigma_{j}  & \mbox{if $p=i$},\\
	-\sigma_{i}   & \mbox{if $p=j$},
	\end{array} \right. \quad p=1,\ldots,m .
	\] By noting that $\overline{\sigma}_{i}=\overline{\sigma}_{j}=0$, we obtain that $\|\hat{t}-\overline{\sigma}\|=\|\sigma-\overline{\sigma}\|\leq \delta$.
	Again, since $g$ is absolutely symmetric on $\hat{\sigma}_{ {\cal N}}$,
	we have $g_{i}(\hat{t})=-g_{j}(\sigma)$. Therefore,
	\begin{equation}\label{eq:bouded-2-2}
	|g_{i}(\sigma)+g_{j}(\sigma)|=|g_{i}(\sigma)-g_{i}(\hat{t})|\leq \|g(\sigma)-g(\hat{t})\|\leq L\| \sigma-\hat{t}\|=\sqrt{2}L|\sigma_{i}+\sigma_{j}| .
	\end{equation}
	Thus the inequality (\ref{eq:gigj-2}) follows from  (\ref{eq:bouded-2-1}) and (\ref{eq:bouded-2-2}).
	
	Finally, we consider the case that $i\in\{1,\ldots,m \}$ and $\sigma_{i}>0$. If $\overline{\sigma}_{i}>0$, then we know from (\ref{eq:bounded-cond-3}) that
	\begin{eqnarray}
	|g_{i}(\sigma)|&=&|g_{i}(\sigma)-g_{i}(\overline{\sigma})+g_{i}(\overline{\sigma})|\leq |g_{i}(\sigma)-g_{i}(\overline{\sigma})|+|g_{i}(\overline{\sigma})|\nonumber\\
	&\leq&\|g(\sigma)-g(\overline{\sigma})\|+\tau\leq \frac{2L\delta+\tau}{\delta}|\sigma_{i}|\leq L_{1}|\sigma_{i}| .\label{eq:bouded-3-1}
	\end{eqnarray} If $\overline{\sigma}_{i}=0$, define $s\in\R^{m}$  by
	\[
	s_{p}:=\left\{ \begin{array}{ll}\sigma_{p}  & \mbox{if $p\neq i$,}\\
	0  & \mbox{if $p=i$,}
	\end{array} \right. \quad p=1,\ldots,m .
	\]
	Then, since $\sigma_{i}>0$, we know that $\|s-\overline{\sigma}\|<\|\sigma-\overline{\sigma}\|\leq \delta$. Moreover, since $g$ is absolutely symmetric on $\hat{\sigma}_{ {\cal N}}$, we know that $g_{i}(s)=0$. Therefore, we have
	\begin{equation}\label{eq:bouded-3-2}
	|g_{i}(\sigma)|=|g_{i}(\sigma)-g_{i}(s)|\leq \|g(\sigma)-g(s)\|\leq L\|\sigma-s\|\leq L|\sigma_{i}| .
	\end{equation} Thus,  the inequality (\ref{eq:gigj-1}) follows from (\ref{eq:bouded-3-1}) and (\ref{eq:bouded-3-2}) immediately. This completes the proof.
\end{proof}

	For any fixed $0<\omega\leq\delta_{0}/\sqrt{m}$ and $y\in B(\overline{\sigma},\delta_{0}/(2\sqrt{m})):=\{\|y-\overline{\sigma}\|_{\infty}\leq\delta_{0}/(2\sqrt{m})\}$, the function $g$ is integrable on $V_{\omega}(y):=\{z\in\R^{m}\mid\|y-z\|_{\infty}\leq \omega/2\}$ (in the sense of Lebesgue). Therefore, we know that the function
	\begin{equation}\label{eq:def-Steklov averaged function}
	g(\omega,y):=\frac{1}{\omega ^{m}}\int_{V_{\omega}(y)}g(z)dz
	\end{equation} is well-defined on $(0,\delta_{0}/\sqrt{m}\,]\times B(\overline{\sigma},\delta_{0}/(2\sqrt{m}))$ and is said to be the Steklov averaged function \cite{Steklov1907} of $g$. For the sake of  convenience, we define $g(0,y)=g(y)$. Since $g$ is absolutely symmetric on $\hat{\sigma}_{ {\cal N}}$, it is easy to check that for any fixed $0<\omega\leq\delta_{0}/\sqrt{m}$, the function $g(\omega,\cdot)$ is also absolutely symmetric on $B(\overline{\sigma},\delta_{0}/(2\sqrt{m}))$. It follows from  the definition \eqref{eq:def-Steklov averaged function} that $g(\cdot,\cdot)$ is locally Lipschitz continuous on $(0,\delta_{0}/\sqrt{m}\,]\times B(\overline{\sigma},\delta_{0}/(2\sqrt{m}))$ with the module $L$.  Meanwhile, by elementary calculations, we know that $g(\cdot,\cdot)$ is continuously differentiable on $(0,\delta_{0}/\sqrt{m}\,]\times B(\overline{\sigma},\delta_{0}/(2\sqrt{m}))$ and for any  fixed $\omega\in(0,\delta_{0}/\sqrt{m}\,]$ and $y\in B(\overline{\sigma},\delta_{0}/(2\sqrt{m}))$, $\|g'_{y}(\omega,y)\|\leq L$.
Moreover, it is well known (cf. e.g., \cite[Lemma 1]{Gupal77}) that $g(\omega,\cdot)$ converges to $g$ uniformly on the compact set $B(\overline{\sigma},\delta_{0}/(2\sqrt{m}))$ as $\omega\downarrow 0$. By using the derivative formula of spectral operators  obtained in \cite[(38)]{DSSToh18},  {we can obtain} the following results from \cite[Theorem 4]{DSSToh18} and Proposition \ref{prop:gigj}, directly. For simplicity, we omit the  {detailed} proof here.
	
	\begin{proposition}\label{prop:G'-uni-bounded}
	Suppose that $g$ is locally Lipschitz continuous near $\overline{\sigma}$.   {Let} $g(\cdot,\cdot)$ be the corresponding Steklov averaged function defined in  (\ref{eq:def-Steklov averaged function}). Then, for any given $\omega\in(0,\delta_{0}/\sqrt{m}\,]$, the spectral operator $G(\omega,\cdot)$ with respect to $g(\omega,\cdot)$ is continuously differentiable on $B(\overline{X},\delta_{0}/(2\sqrt{m})):=\{X\in{\cal X}\mid \|\sigma(X)-\overline{\sigma}\|_{\infty}\leq\delta_{0}/(2\sqrt{m})\}$, and there exist two positive constants $\delta_{1}>0$ and $\overline{L}>0$ such that
	\begin{equation}\label{eq:G'-uni-bounded}
	\|G'(\omega,X)\|\leq \overline{L}\quad \forall\, 0<\omega\leq \min\{\delta_{0}/\sqrt{m},\delta_{1}\}\ {\rm and}\ X\in B(\overline{X},\delta_{0}/(2\sqrt{m})) .
	\end{equation} Moreover, $G(\omega,\cdot)$ converges to $G$ uniformly in the compact set $B(\overline{X},\delta_{0}/(2\sqrt{m}))$ as $\omega\downarrow 0$.
	 \end{proposition}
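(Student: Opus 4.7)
The plan is to combine the explicit derivative formula for spectral operators (equation (38) of \cite{DSSToh18}) with the uniform Lipschitz structure of the Steklov averaged function $g(\omega,\cdot)$, and then to deduce the uniform convergence from the unitary invariance of the Frobenius norm.

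First, for each fixed $\omega\in(0,\delta_{0}/\sqrt{m}\,]$ the averaged function $g(\omega,\cdot)$ is continuously differentiable on $B(\overline{\sigma},\delta_{0}/(2\sqrt{m}))$, is absolutely symmetric on that set (because the cube $V_\omega(y)$ is invariant under the signed-permutation action on $y$, so absolute symmetry is preserved by the integral in \eqref{eq:def-Steklov averaged function}), and satisfies $\|g'_y(\omega,y)\|\le L$ uniformly in $(\omega,y)$. These are precisely the hypotheses under which Theorem 4 of \cite{DSSToh18} applies; that theorem yields continuous differentiability of $G(\omega,\cdot)$ on $B(\overline{X},\delta_{0}/(2\sqrt{m}))$ together with formula (38), expressing $G'(\omega,X)H$ as a term involving $g'_y(\omega,\sigma(X))$ acting on the diagonal part of $U^\T H V_{1}$ plus Hadamard products with the divided-difference matrices ${\cal E}_1^0,{\cal E}_2^0,{\cal F}^0$ built from $g(\omega,\sigma(X))$.

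Second, to obtain an operator-norm bound on $G'(\omega,X)$ that is uniform in $\omega$, I would estimate each building block separately. The diagonal contribution is controlled by $\|g'_y(\omega,\sigma(X))\|\le L$. For the off-diagonal contributions I would apply Proposition \ref{prop:gigj} to $g(\omega,\cdot)$, using that its Lipschitz modulus is still $L$ and that $g(\omega,\overline{\sigma})\to g(\overline{\sigma})$ as $\omega\downarrow 0$. Consequently the quantity $\tau=\max_{i,j}\{|g_i(\overline{\sigma})-g_j(\overline{\sigma})|,|g_i(\overline{\sigma})+g_j(\overline{\sigma})|,|g_i(\overline{\sigma})|\}$ appearing in the proof of Proposition \ref{prop:gigj} remains uniformly bounded if we replace $g$ by $g(\omega,\cdot)$ for all sufficiently small $\omega$, so the resulting constant $L'$ can be chosen independently of $\omega$. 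This forces every entry of ${\cal E}_1^0,{\cal E}_2^0,{\cal F}^0$ at $\sigma(X)$ to be bounded by a uniform constant; plugging into (38) and choosing $\delta_1>0$ appropriately yields \eqref{eq:G'-uni-bounded} for an absolute constant $\overline{L}$.

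Finally, for the uniform convergence, I would pick for each $X$ an SVD $X=U[\Sigma(X)\ 0]V^\T$ with $(U,V)\in\mathbb{O}^{m,n}(X)$. By unitary invariance of the Frobenius norm,
\[
\|G(\omega,X)-G(X)\|=\|g(\omega,\sigma(X))-g(\sigma(X))\|.
\]
Since $X\mapsto\sigma(X)$ is continuous, its image over the compact ball $B(\overline{X},\delta_{0}/(2\sqrt{m}))$ is a compact subset of $B(\overline{\sigma},\delta_{0}/(2\sqrt{m}))$, on which $g(\omega,\cdot)$ converges uniformly to $g$ by the result of Gupal recalled after \eqref{eq:def-Steklov averaged function}; this transfers to uniform convergence of $G(\omega,\cdot)$ to $G$ on $B(\overline{X},\delta_{0}/(2\sqrt{m}))$. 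The main technical obstacle I expect is verifying the uniformity in $\omega$ of the constants produced by Proposition \ref{prop:gigj} when it is applied to the family $\{g(\omega,\cdot)\}_{\omega>0}$: this requires revisiting the proof of Proposition \ref{prop:gigj} to confirm that both the Lipschitz constant and the centering values stay uniformly bounded, and that a single neighborhood $B(\overline{\sigma},\delta)$ can be chosen on which the divided-difference estimates hold for every small $\omega$.
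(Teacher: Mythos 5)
Your proposal is correct and follows essentially the same route the paper indicates for this result (whose detailed proof it omits): continuous differentiability and the derivative formula from \cite[Theorem 4 and (38)]{DSSToh18} applied to the smooth, absolutely symmetric functions $g(\omega,\cdot)$, entrywise bounds on ${\cal E}_1^0,{\cal E}_2^0,{\cal F}^0$ via Proposition \ref{prop:gigj}, and transfer of Gupal's uniform convergence to $G(\omega,\cdot)$ through the unitary invariance of the norm. Your explicit attention to the $\omega$-uniformity of the constants in Proposition \ref{prop:gigj} (uniform modulus $L$, convergence of $g(\omega,\overline{\sigma})$ to $g(\overline{\sigma})$, and an $\omega$-independent neighborhood), which is precisely where the threshold $\delta_1$ in \eqref{eq:G'-uni-bounded} comes from, correctly fills in the step the paper leaves implicit.
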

	
	Proposition \ref{prop:G'-uni-bounded} allows us to derive the following result on the local Lipschitz continuity of  spectral operators.
	
	\begin{theorem}\label{thm:Lip-spectral-op}
	Suppose that $\overline{X}$ has the SVD (\ref{eq:Y-eig-Z-SVD}).  The spectral operator $G$ is locally Lipschitz continuous near $\overline{X}$ if and only if $g$ is locally Lipschitz continuous near $\overline{\sigma}=\sigma(\overline{X})$.
	\end{theorem}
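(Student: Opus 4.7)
The theorem is an equivalence, and I would handle the two implications separately, with sufficiency (``$g$ locally Lipschitz near $\overline{\sigma}$ implies $G$ locally Lipschitz near $\overline{X}$'') being the substantive one.

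For sufficiency, the plan is to cash in on Proposition \ref{prop:G'-uni-bounded}. That result gives, for every small enough $\omega>0$, a continuously differentiable smoothed spectral operator $G(\omega,\cdot)$ with $\|G'(\omega,X)\|\le\overline{L}$ uniformly in $\omega$ and in $X$ over a neighborhood of $\overline{X}$, together with uniform convergence $G(\omega,\cdot)\to G$ as $\omega\downarrow 0$. I would pick a Frobenius-norm ball ${\cal B}$ centered at $\overline{X}$ of radius $r\le\delta_{0}/(2\sqrt{m})$, which is both convex and (by the $1$-Lipschitz property of singular values with respect to the Frobenius norm) contained in the set $B(\overline{X},\delta_{0}/(2\sqrt{m}))$ on which Proposition \ref{prop:G'-uni-bounded} applies. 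For any $X,Y\in{\cal B}$ the segment $[X,Y]$ lies in ${\cal B}$, so the mean value inequality applied to the smooth map $G(\omega,\cdot)$ yields
$$\|G(\omega,X)-G(\omega,Y)\|\le \overline{L}\,\|X-Y\|,$$
uniformly in $\omega$. Letting $\omega\downarrow 0$ and invoking the uniform convergence transfers the inequality to $G$ itself, which is exactly the desired local Lipschitz continuity.

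For necessity, I would test $G$ on a one-parameter family of matrices with prescribed singular values. For any $y$ near $\overline{\sigma}$ in the ``canonical cone'' $C=\{z\in\R^{m}:z_{1}\ge\ldots\ge z_{m}\ge 0\}$, set $X_{y}:=\overline{U}\bigl[{\rm Diag}(y)\ \ 0\bigr]\overline{V}^{\T}$. Then $\sigma(X_{y})=y$ and $(\overline{U},\overline{V})\in{\mathbb O}^{m,n}(X_{y})$, so Definition \ref{def:def-spectral-op} gives $G(X_{y})=\overline{U}\bigl[{\rm Diag}(g(y))\ \ 0\bigr]\overline{V}^{\T}$. Unitary invariance of the Frobenius norm yields $\|G(X_{y})-G(X_{y'})\|_{F}=\|g(y)-g(y')\|$ and $\|X_{y}-X_{y'}\|_{F}=\|y-y'\|$, so a Lipschitz bound for $G$ near $\overline{X}$ transfers directly to a Lipschitz bound for $g$ on a $C$-neighborhood of $\overline{\sigma}$. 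Upgrading this to a full Euclidean ball $B(\overline{\sigma},\delta)$ uses the absolutely symmetric property of $g$: any $y$ near $\overline{\sigma}$ factors as $y=Q_{y}y^{c}$ with $y^{c}\in C$ the sorted-absolute-value form and $Q_{y}\in\pm\mathbb{P}^{m}$, so $g(y)=Q_{y}g(y^{c})$; concatenating along the finitely many ``type-change'' points on a segment $[y,y']$, together with the $1$-Lipschitz continuity of the sorting map $y\mapsto y^{c}$, promotes the cone bound to the full ball.

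The main technical hurdle lies in the extension step of the necessity direction, since in degenerate cases where $\overline{\sigma}$ has repeated or zero entries the signed-permutation representative $Q_{y}$ is nonunique and changes discontinuously across $B(\overline{\sigma},\delta)$; what rescues the argument is the $1$-Lipschitz continuity of the sorted-absolute-value map together with a segmentwise concatenation respecting isometries $Q_{y}$. The sufficiency direction, by contrast, is essentially a routine mean-value-plus-uniform-limit argument, since all the real analytic work has already been packaged into Proposition \ref{prop:G'-uni-bounded}.
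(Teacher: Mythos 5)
Your proposal is correct, and for the sufficiency direction it is essentially the paper's own argument: invoke Proposition \ref{prop:G'-uni-bounded}, apply the mean value theorem to the smoothed operators $G(\omega,\cdot)$, and pass to the limit using the uniform convergence $G(\omega,\cdot)\to G$. Your extra care in working on a convex Frobenius-norm ball contained in $B(\overline{X},\delta_{0}/(2\sqrt{m}))$ is a sensible refinement, since that set is defined through $\|\sigma(X)-\overline{\sigma}\|_{\infty}$ and is not obviously convex, while the mean value step needs the whole segment to stay where the derivative bound \eqref{eq:G'-uni-bounded} holds. Where you genuinely diverge is the necessity direction: the paper simply cites \cite[Proposition 3]{DSSToh18}, which asserts $G\bigl(\overline{U}[{\rm Diag}(y)\ \ 0]\overline{V}^\T\bigr)=\overline{U}[{\rm Diag}(g(y))\ \ 0]\overline{V}^\T$ for \emph{every} $y\in\hat{\sigma}_{\cal N}$, with no ordering or sign restriction, so the Lipschitz bound for $g$ near $\overline{\sigma}$ follows in one line from unitary invariance of the norm. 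You instead use the definition of $G$ only at sorted nonnegative vectors, where $\sigma(X_{y})=y$, and then upgrade to a full Euclidean ball by factoring $y=Q_{y}y^{c}$ with a signed permutation, using absolute symmetry, and subdividing the segment $[y,y']$ at the finitely many points where the representative $Q$ changes; combined with the $1$-Lipschitz continuity of the sorted-absolute-value map, the bound telescopes correctly, so the argument goes through. In effect you re-derive by hand the content of the cited proposition: your route is more self-contained and makes explicit exactly how absolute symmetry enters, at the cost of the combinatorial bookkeeping that the citation renders unnecessary.
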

	\begin{proof}
		$``\Longleftarrow"$ Suppose that $g$ is locally Lipschitz continuous near $\overline{\sigma}=\sigma(\overline{X})$ with module $L>0$, i.e., there exists a positive constant $\delta_{0}>0$ such that
	\[
	\|g(\sigma)-g(\sigma')\|\leq L\|\sigma-\sigma'\| \quad \forall\, \sigma,\sigma'\in B(\overline{\sigma},\delta_{0}) .
	\] By Proposition \ref{prop:G'-uni-bounded}, for any  $\omega\in(0,\delta_{0}/\sqrt{m}\,]$, the spectral operator $G(\omega,\cdot)$ defined with respect to the Steklov averaged function $g(\omega, \cdot)$ is continuously differentiable. Since $G(\omega,\cdot)$ converges to $G$ uniformly in the compact set $B(\overline{X},\delta_{0}/(2\sqrt{m}))$ as $\omega\downarrow 0$, we know that for any $\varepsilon>0$, there exists a constant $\delta_{2}>0$ such that for any $0<\omega\leq\delta_{2}$,
	\[
	\|G(\omega,X)-G(X)\|\leq \varepsilon\quad \forall\,X\in B(\overline{X},\delta_{0}/(2\sqrt{m})) .
	\] Fix any $X,X'\in B(\overline{X},\delta_{0}/(2\sqrt{m}))$ with $X\neq X'$. By Proposition \ref{prop:G'-uni-bounded}, we know that there exists $\delta_{1}>0$ such that (\ref{eq:G'-uni-bounded}) holds. Let $\bar{\delta}:=\min\{\delta_{1}, \delta_{2},\delta_{0}/\sqrt{m}\}$. Then, by the mean value theorem, we know that
	\begin{eqnarray*}
	\|G(X)-G(X')\|&=&\|G(X)-G(\omega,X)+G(\omega,X)-G(\omega,X')+G(\omega,X')-G(X')\|\\
	&\leq&2\varepsilon+\|\int_{0}^{1}G'(\omega,X+t(X-X'))dt\|\leq\overline{L}\|X-X'\|+2\varepsilon\quad \forall\, 0<\omega<\bar{\delta} .
	\end{eqnarray*} Since $X,X'\in B(\overline{X},\delta_{0}/(2\sqrt{m}))$ and $\varepsilon>0$ are arbitrary, by letting $\varepsilon\downarrow 0$, we obtain that
	\[
	\|G(X)-G(X')\|\leq\overline{L}\|X-X'\|\quad \forall\,X,X'\in B(\overline{X},\delta_{0}/(2\sqrt{m})) .
	\] Thus $G$ is locally Lipschitz continuous near $\overline{X}$.
	
	$``\Longrightarrow"$ Suppose that $G$ is locally Lipschitz continuous near $\overline{X}$ with module $L>0$, i.e., there exists an open neighborhood ${\cal B}$ of $\overline{X}$ in ${\cal N}$ such that for any $X,X'\in{\cal B}$,
	\[
	\|G(X)-G(X')\|\leq L\|X-X'\| .
	\]
	Let $(\overline{U},\overline{V})\in{\mathbb O}^{m\times n}(\overline{X})$ be fixed. For any $y\in\hat{\sigma}_{\cal N}$, we   define $Y:=\overline{U}\left[{\rm Diag}(y) \quad 0 \right]\overline{V}^\T$. Then, we know from \cite[Proposition 3]{DSSToh18} that $G(Y)=\overline{U}\left[{\rm Diag}(g(y)) \quad 0 \right]\overline{V}^\T$. Therefore, we obtain that there exists  an open neighborhood ${\cal B}_{\overline{\sigma}}$ of $\overline{\sigma}$ in $\hat{\sigma}_{\cal N}$ such that
	\[
	\|g(y)-g(y')\|=\|G(Y)-G(Y')\|\leq L\|Y-Y'\|=L\|y-y'\|\quad \forall\, y,y'\in{\cal B}_{\overline{\sigma}} .
	\]
This completes the proof.
	\end{proof}

	
\section{Bouligand-differentiability}\label{section:B-diff}

In this section, we shall study the $\rho$-order Bouligand-differentiability of spectral operators with $0<\rho\leq 1$, which is a slightly  stronger property than the directional differentiability studied in \cite[Theorem 3]{DSSToh18}.

Let ${\cal Z}$ be a finite dimensional real Euclidean space equipped with an inner product $\langle \cdot,\cdot \rangle$ and its induced norm $\|\cdot\|$. Let ${\cal O}$ be an open set in ${\cal Z}$ and ${\cal Z}'$ be another finite dimensional real Euclidean space. The function $F: {\cal O}\subseteq{\cal Z}  \to  {\cal Z}^\prime$ is said to be  {\it B(ouligand)-differentiable} \cite{Robinson87} (see also \cite{Pang90,FPang03,Pang91} for more details) at $z\in{\cal O}$ if for any $h\in{\cal Z}$ with $h\to 0$,
\begin{equation*}\label{eq:def-B-diff}
F(z+h)-F(z)-F'(z;h)=o(\|h\|) .
\end{equation*}
It is well known (cf. \cite{Shapiro90}) that if $F$ is locally Lipschitz continuous then $F$ is B-differentiable at $z\in{\cal O}$ if and only if $F$ is directionally differentiable at $z$. If the spectral operator $G$ is directionally differentiable, then the corresponding directional derivative formula  is presented in \cite[(21) in Theorem 3]{DSSToh18}. More precisely, since $g$ is absolutely symmetric on the nonempty open set $\hat{\sigma}_{ {\cal N}}$, it is easy to see that the directional derivative $\phi:=g'(\overline{\sigma};\cdot):\R^{m}\to\R^{m}$ satisfies
	\begin{equation}\label{eq:dir-diff-symmetric}
		g'(\overline{\sigma};Qh)=Qg'(\overline{\sigma};h)\quad \forall\, Q\in\pm\mathbb{P}_{\overline{\sigma}}^m\quad {\rm and} \quad \forall\,h\in\R^{m}\,,
	\end{equation} where $\pm\mathbb{P}_{\overline{\sigma}}^m$ is the subset defined with respect to $\overline{\sigma}$ by $\pm\mathbb{P}_{\overline{\sigma}}^m:=\left\{{Q}\in\pm\mathbb{P}^m\,|\, \overline{\sigma}={Q}\overline{\sigma}\right\}$. Thus, we know that the function $\phi$ is a mixed symmetric mapping, with respect to
$ \mathbb{P}^{|a_1|}\times\ldots\times\mathbb{P}^{|a_r|}\times\pm\mathbb{P}^{|b|}$,
over ${\cal V}:=\R^{|a_1|}\times\ldots\times\R^{|a_r|}\times\R^{|b|}$. Let $\Psi:=G'(\overline{X};\cdot):\V^{m\times n}\to\V^{m\times n}$ be the directional derivative of $G$ at $\overline{X}$. Let ${\cal W}:=\S^{|a_{1}|}\times\ldots\times\S^{|a_{r}|}\times\V^{|b|\times(n-|a|)}$. We know from \cite[(21) Theorem 3]{DSSToh18} that for any $H\in\V^{m\times n}$,
\begin{eqnarray}
\Psi(H) &=&G'(\overline{X};H)= \overline{U}\left[\overline{\cal E}^0_1\circ S(\overline{U}^\T H\overline{V}_{1})+\overline{\cal E}^0_2\circ T(\overline{U}^\T H\overline{V}_{1}) \quad \overline{\cal F}^0\circ \overline{U}^\T H\overline{V}_{2}  \right]\overline{V}^\T  +\overline{U} {\widehat \Phi}(D(H))\overline{V}^\T \nonumber
\\[3pt]
&=&  {G_S^\prime(\overline{X};H) + \overline{U} {\widehat \Phi}(D(H))\overline{V}^\T},
\label{eq:def-Psi}
\end{eqnarray}
where  $D(H)=\left(S(\widetilde{H}_{a_1a_1}),\ldots,S(\widetilde{H}_{a_ra_r}),\widetilde{H}_{b\bar{a}}\right)\in{\cal W}$, $\widetilde{H}=\overline{U}^\T H\overline{V}$, ${\Phi}:{\cal W}\to{\cal W}$ being the spectral operator defined with respect to the mixed symmetric mapping $\phi=g'(\overline{\sigma};\cdot)$,
and ${\widehat \Phi}:{\cal W}\to \V^{m\times n}$ is  {defined by}
	\begin{equation}\label{eq:def-Diag-Phi}
		{\widehat \Phi}(W):= \left[\begin{array}{cc} {\rm Diag}\left(\Phi_{1}(W), \dots, \Phi_{r}(W)\right)  & 0
			\\[2mm]
			0 &  \Phi_{r+1}(W)
		\end{array}\right] \quad { \forall\; W\in{\cal W}.}
	\end{equation}

A stronger notion than B-differentiability is $\rho$-order B-differentiability with $\rho>0$. The function $F: {\cal O}\subseteq{\cal Z}  \to  {\cal Z}^\prime$ is said to be  {\it $\rho$-order B-differentiable} at $z\in{\cal O}$ if for any $h\in{\cal Z}$ with $h\to 0$,
\begin{equation*}\label{eq:def-rho-order-B-diff}
F(z+h)-F(z)-F'(z;h)=O(\|h\|^{1+\rho}) .
\end{equation*}
Let $\overline{X}\in\V^{m\times n}$ be given. We have the following results on the $\rho$-order B-differentiability of spectral operators.
\begin{theorem}\label{thm:rho-order-B-diff-spectral-op}
	Suppose that $\overline{X}\in{\cal N}$ has the SVD (\ref{eq:Y-eig-Z-SVD}).  Let $0<\rho\leq 1$ be given.
	\begin{itemize}
		\item[(i)]  {If} $g$ is locally Lipschitz continuous near $\sigma(\overline{X})$ and $\rho$-order B-differentiable at $\sigma(\overline{X})$, then $G$ is $\rho$-order B-differentiable at $\overline{X}$.
		\item[(ii)]  {If} $G$ is $\rho$-order B-differentiable at $\overline{X}$, then $g$ is $\rho$-order B-differentiable at $\sigma(\overline{X})$.
	\end{itemize}
\end{theorem}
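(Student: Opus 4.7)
}
The two implications are rather asymmetric, so I would treat them separately and start with the easier direction (ii). The plan for (ii) is to probe $G$ along matrices of the form $Y(y):=\overline{U}\,[\,\mathrm{Diag}(y),\,0\,]\,\overline{V}^\T$, where $y$ ranges in a neighbourhood of $\overline{\sigma}$ in $\hat\sigma_{\cal N}$. By \cite[Proposition 3]{DSSToh18} (applied with the fixed pair $(\overline U,\overline V)\in\mathbb{O}^{m,n}(\overline{X})$), we have $G(Y(y))=\overline{U}\,[\,\mathrm{Diag}(g(y)),\,0\,]\,\overline{V}^\T$ and $\|Y(y)-\overline{X}\|=\|y-\overline{\sigma}\|$. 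Next I would evaluate $G'(\overline{X};Y(y)-\overline{X})$ via formula \eqref{eq:def-Psi}: with $\widetilde H=[\mathrm{Diag}(y-\overline{\sigma}),0]$, the two $\overline{\cal E}^0_j$-Hadamard terms vanish since $(\overline{\cal E}^0_1)_{ii}=0$, and because each block $S(\widetilde H_{a_l a_l})$ is diagonal and $\widetilde H_{b\bar a}$ has the diagonal form $[\mathrm{Diag}((y-\overline\sigma)_b),0]$, the spectral operator $\widehat\Phi(D(Y(y)-\overline{X}))$ collapses coordinate-wise, giving $G'(\overline{X};Y(y)-\overline{X})=\overline{U}\,[\,\mathrm{Diag}(\phi(y-\overline{\sigma})),0\,]\,\overline{V}^\T$ with $\phi=g'(\overline{\sigma};\cdot)$. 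Substituting into the assumed $\rho$-order B-differentiability of $G$ at $\overline{X}$ and using the isometry $\|Y(y)-\overline{X}\|=\|y-\overline{\sigma}\|$ immediately yields the $\rho$-order B-differentiability of $g$ at $\overline{\sigma}$.

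For part (i), my plan is to exploit the decomposition $G=G_S+G_R$ from \eqref{eq:def-GS}. The $G_S$ part is free: by \eqref{eq:Gs-diff-formula} it is $C^2$ on a neighbourhood of $\overline{X}$, so $G_S(\overline{X}+H)-G_S(\overline{X})-G_S'(\overline{X})H=O(\|H\|^2)=O(\|H\|^{1+\rho})$ since $\rho\le 1$. The task thus reduces to the nonsmooth remainder $G_R$. I would first record that $G_R(\overline{X})=0$: for $i\in a_l$ we have $g_i(\overline{\sigma})=\bar g_l$ so those contributions cancel, and for $i\in b$ the sign-flipping invariance coming from the absolute symmetry of $g$ at $\overline\sigma$ forces $g_i(\overline{\sigma})=0$. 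Writing any SVD $(U,V)\in\mathbb{O}^{m,n}(\overline{X}+H)$, a direct computation then gives
\[
G_R(\overline{X}+H)=\sum_{i=1}^{m}\bigl(g_i(\sigma(X))-g_i(\overline{\sigma})\bigr)\,u_iv_i^\T,\qquad X=\overline{X}+H.
\]
Using Mirsky's inequality $\|\sigma(X)-\overline{\sigma}\|\le\|H\|$ and the $\rho$-order B-differentiability of $g$ at $\overline{\sigma}$ componentwise, together with $\|u_iv_i^\T\|=1$, I would then obtain
\[
G_R(\overline{X}+H)=\sum_{i=1}^{m}\phi_i\bigl(\sigma(X)-\overline{\sigma}\bigr)\,u_iv_i^\T+O(\|H\|^{1+\rho}).
\]
The remaining task is to identify the leading sum with $\overline U\,\widehat\Phi(D(H))\,\overline V^\T$ up to $O(\|H\|^{1+\rho})$. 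To do so, I would invoke the standard SVD perturbation of $\overline{X}+H$ near a multi-cluster point: one can choose $(U,V)$ so that the columns indexed by $a_l$ take the form $\overline U_{a_l}P_l+O(\|H\|)$ and $\overline V_{a_l}Q_l+O(\|H\|)$ for orthogonal matrices $P_l,Q_l$ diagonalising $S(\widetilde H_{a_l a_l})$ (up to the off-diagonal $T$-part absorbed by $G_S$), and analogously the columns indexed by $b$ are governed by the left/right singular vectors of $\widetilde H_{b\bar a}$. Because $\phi$ is Lipschitz, positively homogeneous of degree one, and mixed symmetric over the cluster partition, substituting these expansions into the sum regroups exactly into the block structure defining $\widehat\Phi(D(H))$ in \eqref{eq:def-Diag-Phi}, while cross-cluster products are quadratically small.

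The main obstacle is this last identification step. The $\rho$-order B-differentiability of $g$ only controls the scalar function values; to pass from $\sum_i\phi_i(\sigma(X)-\overline\sigma)u_iv_i^\T$ to the structured matrix $\overline U\,\widehat\Phi(D(H))\,\overline V^\T$ with $O(\|H\|^{1+\rho})$ error, one has to handle two genuinely second-order effects simultaneously: (a) the error in approximating $\sigma_i(X)-\overline\sigma_i$ by the corresponding eigenvalue of $S(\widetilde H_{a_l a_l})$ or singular value of $\widetilde H_{b\bar a}$, and (b) the $O(\|H\|)$ rotation between the perturbed singular vectors $u_i,v_i$ and $\overline u_i,\overline v_i$. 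Here the Lipschitz-type estimates of Proposition \ref{prop:gigj} are essential: they bound $|g_i(\sigma(X))-g_j(\sigma(X))|$ (respectively $|g_i+g_j|$, $|g_i|$) by the corresponding gap in singular values, which forces the cross-cluster contributions in the rewriting to be $O(\|H\|\cdot\|H\|)=O(\|H\|^2)\subseteq O(\|H\|^{1+\rho})$, while the intra-cluster contributions collapse cleanly because $\phi$ respects the cluster partition. Combining (a), (b) and Proposition \ref{prop:gigj} in this way, together with $G_S$ contributing the smooth part, completes the required estimate.
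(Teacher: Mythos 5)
Your plan follows essentially the same route as the paper's proof: part (ii) by restricting $G$ to the isometric diagonal slice $y\mapsto \overline{U}\left[{\rm Diag}(y)\quad 0\right]\overline{V}^\T$ via \cite[Proposition 3]{DSSToh18}, and part (i) via the decomposition $G=G_S+G_R$ of \eqref{eq:def-GS}, handling $G_S$ by \eqref{eq:Gs-diff-formula} and identifying $G_R(\overline{X}+H)$ with $\overline{U}\,{\widehat\Phi}(D(H))\,\overline{V}^\T$ up to $O(\|H\|^{1+\rho})$ through singular value/vector perturbation and the Lipschitz continuity of $\phi=g'(\overline{\sigma};\cdot)$.

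Three points in your sketch need tightening, though none changes the strategy. First, in (ii) you evaluate $G'(\overline{X};\cdot)$ through formula \eqref{eq:def-Psi}; that formula is built from $\phi=g'(\overline{\sigma};\cdot)$, whose existence is not among the hypotheses of (ii), so as written the argument is mildly circular. The paper avoids this: from ${\rm Diag}\big(g(\overline{\sigma}+h)-g(\overline{\sigma})\big)=\overline{U}^\T\big(G(\overline{X}+H)-G(\overline{X})\big)\overline{V}_1$ and the assumed expansion of $G$, the directional derivative of $g$ is obtained as the diagonal of $\overline{U}^\T G'(\overline{X};H)\overline{V}_1$ (a positively homogeneous function of $h$), and the $O(\|h\|^{1+\rho})$ remainder comes out simultaneously; no appeal to \eqref{eq:def-Psi} is needed. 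Second, in (i) you allow two possibly different orthogonal matrices $P_l$ and $Q_l$ for the left and right frames of a nonzero cluster $a_l$; with genuinely different rotations the regrouping into ${\widehat\Phi}(D(H))$ does not go through. The perturbation result the paper uses, \cite[Proposition 7]{DSToh10}, gives the \emph{same} $Q_l$ on both sides for each cluster with $\overline{\nu}_l>0$ (and the pair $(M,N)$ for the zero block), together with $S(\widetilde{H}_{a_la_l})=Q_l\big(\Sigma(X)_{a_la_l}-\overline{\nu}_l I\big)Q_l^\T+O(\|H\|^{2})$ and its analogue for $[\,\widetilde H_{bb}\ \widetilde H_{bc}\,]$; combined with the global Lipschitz continuity of the spectral operator $\Phi$ on ${\cal W}$, this is exactly what turns $Q_l{\rm Diag}(\phi_l(h))Q_l^\T$ into $\Phi_l(D(H))$ up to $O(\|H\|^{2})$. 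Third, Proposition \ref{prop:gigj} plays no role in the paper's proof of this theorem and is not the right tool for the cross-cluster terms: those are $O(\|H\|^{2})$ simply because the frame errors are $O(\|H\|)$ while $\|\phi(h)\|=O(\|H\|)$ (global Lipschitz continuity of $\phi$ with $\phi(0)=0$, equivalently $\|g(\sigma)-g(\overline{\sigma})\|=O(\|H\|)$ by local Lipschitz continuity of $g$). With these corrections your outline coincides with the paper's argument.
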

\begin{proof}
Without loss of generality, we only prove the results for the case that $\rho =1$.

(i) For any $H\in\V^{m\times n}$, denote $X=\overline{X}+H$. Let $U\in{\mathbb O}^{m}$ and $V\in{\mathbb O}^{n}$ be such that
\begin{equation}\label{eq:def-Y-Z-B-diff}
X=U[\Sigma(X) \quad 0]V^\T  .
\end{equation}
Denote $\sigma=\sigma(X)$. Let $G_{S}(X)$ and $G_{R}(X)$ be defined by (\ref{eq:def-GS}). Therefore, by \eqref{eq:Gs-diff-formula}, we know that for any $H\to{0}$,
\begin{equation}\label{eq:smoothpart-B-diff-Spectral-op}
G_{S}(X)-G_{S}(\overline{X})=G'_{S}(\overline{X})H+O(\|H\|^{2}),
\end{equation}
where $G_S'(\overline{X})H$ is given by \eqref{eq:Gs'-formula}. For $H\in\V^{m\times n}$ sufficiently small, we have ${\cal U}_{l}(X)={\sum_{i\in a_{l}}}u_{i}v_{i}^\T $, $l=1,\ldots,r$. Therefore,  we know that
\begin{equation}\label{eq:GR-def-B-diff-spectral-op}
G_{R}(X)=G(X)-G_{S}(X)=\sum_{l=1}^{r+1}\Delta_{l}(H) ,
\end{equation}
where
\[
\Delta_{l}(H)=\sum_{i\in a_{l}}(g_{i}(\sigma)-g_i(\overline{\sigma}))u_{i}v_{i}^\T  \quad l=1,\ldots,r \quad {\rm and} \quad \Delta_{r+1}(H)=\sum_{i\in b}g_i(\sigma)u_{i}v_{i}^\T.
\]

\noindent {(a)}
We first consider the case that $\overline{X}=[\Sigma(\overline{X}) \quad 0 ]$. Then,  we know from the directional differentiability of single values (cf. e.g., \cite[Theorem 7]{Lancaster64}, \cite[Proposition 1.4]{Torki01} and \cite[Section 5.1]{LSendov05b}) that for any  $H$ sufficiently small,
\begin{equation}\label{eq:eig-singular-B-diff-1}
\sigma=\overline{\sigma}+\sigma'(\overline{X};H)+O(\|H\|^{2}) ,
\end{equation}
where $\sigma'(\overline{X};H)=\left(\lambda(S(H_{a_{1}a_{1}})),\ldots,\lambda(S(H_{a_{r}a_{r}})),\sigma([H_{bb}\quad H_{bc}])\right)\in\R^{m}$. Denote $h:=\sigma'(\overline{X};H)$. Since $g$ is locally Lipschitz continuous near $\overline{\sigma}$ and $1$-order B-differentiable at $\overline{\sigma}$, we know that for any $H$ sufficiently small,
\begin{equation*}
g(\sigma)-g(\overline{\sigma})=g( {\overline{\sigma}}+h+O(\|H\|^{2}))-g(\overline{\sigma})
= g( {\overline{\sigma}}+h)-g(\overline{\sigma})+O(\|H\|^{2})=g'(\overline{\sigma};h)+O(\|H\|^{2}) .
\end{equation*}
Let $\phi=g'(\overline{\sigma};\cdot)$. Since $u_{i}v_{i}^\T $, $i=1,\ldots,m$ are uniformly bounded, we obtain that for $H$ sufficiently small,
\begin{eqnarray*}
	\Delta_{l}(H) &=& U_{a_{l}}{\rm Diag}(\phi_{l}(h))V_{a_{l}}^\T +O(\|H\|^{2}),\quad  l=1,\ldots,r ,
	\\
	\Delta_{r+1}(H)&=& U_{b}{\rm Diag}(\phi_{r+1}(h))V_{b}^\T +O(\|H\|^{2}) .
\end{eqnarray*}
Again, we know from \cite[Proposition 7]{DSToh10} that there exist $Q_{l}\in{\mathbb O}^{|a_{l}|}$, $M\in{\mathbb O}^{|b|}$ and $N=[N_{1}\quad N_{2}]\in{\mathbb O}^{n-|a|}$ with $N_{1}\in\V^{(n-|a|)\times |b|}$ and $N_{2}\in\V^{(n-|a|)\times(n-m)}$ (depending on $H$) such that
\begin{eqnarray*}
	U_{a_{l}} &=&\left[\begin{array}{c} O(\|H\|)  \\[3pt] Q_{l}+O(\|H\|) \\[3pt] O(\|H\|) \end{array}\right], \quad  V_{a_{l}}=\left[\begin{array}{c} O(\|H\|)  \\[3pt] Q_{l}+O(\|H\|) \\[3pt] O(\|H\|) \end{array}\right], \ l=1,\ldots,r ,
	\\[3pt]
	U_{b}&=&\left[\begin{array}{c} O(\|H\|) \\[3pt] M+O(\|H\|)\end{array}\right],\quad  [V_{b}\quad V_{c}]=\left[\begin{array}{c} O(\|H\|) \\[3pt]
		N+O(\|H\|)\end{array}\right]   .
\end{eqnarray*}
Since $g$ is locally Lipschitz continuous near $\overline{\sigma}$ and directionally differentiable at $\overline{\sigma}$, we know from \cite[Theorem A.2]{Robinson87} or \cite[Lemma 2.2]{QSun93} that the directional derivative $\phi$ is globally Lipschitz continuous on $\R^{m}$. Thus, for $H$ sufficiently small, we have $\|\phi(h)\|=O(\|H\|)$. Therefore,  we obtain that
\begin{eqnarray}
\Delta_{l}(H) &=& \left[\begin{array}{ccc} 0 & 0  & 0  \\[3pt] 0 & Q_{l}{\rm Diag}(\phi_{l}(h))Q_{l}^\T   & 0   \\[3pt] 0 & 0  & 0 \end{array}\right]
+O(\|H\|^{2}),\quad l=1,\ldots,r ,
\label{eq:Delta-k-B-diff-1}
\\[6pt]
\Delta_{r+1}(H)&=&\left[\begin{array}{cc}0 & 0  \\[3pt]0 & M{\rm Diag}(\phi_{r+1}(h))N_{1}^\T  \end{array}\right]+O(\|H\|^{2}) .
\label{eq:Delta-k-B-diff-2}
\end{eqnarray}
Again, it follows from \cite[Proposition 7]{DSToh10} that
\begin{eqnarray}
S(H_{a_{l}a_{l}}) &=& Q_{l}(\Sigma(X)_{a_{l}a_{l}}-\overline{\nu}_{l}I_{|a_{l}|})Q_{l}^\T +O(\|H\|^{2}),\quad l=1,\ldots,r ,
\label{eq:H-H'-2}
\\[3pt]
[H_{bb}\quad H_{bc}] &=& M(\Sigma(X)_{bb}-\overline{\nu}_{r+1}I_{|b|})N_{1}^\T +O(\|H\|^{2}) .
\label{eq:H-H'-3}
\end{eqnarray}
Since $g$ is locally Lipschitz continuous near $\overline{\sigma}=\sigma(\overline{X})$,  we know from Theorem \ref{thm:Lip-spectral-op} that the spectral operator $G$ is locally Lipschitz continuous near $\overline{X}$. Therefore, we know from \cite[Theorem 3 and Remark 1]{DSSToh18} that $G$ is  {directionally differentiable} at $\overline{X}$. Thus, from  \cite[Theorem A.2]{Robinson87} or \cite[Lemma 2.2]{QSun93}, we know that $G'(\overline{X},\cdot)$ is globally Lipschitz continuous on $\V^{m\times n}$. Moreover, from the definition of directional derivative and the absolutely symmetry of $g$ on the nonempty open set $\hat{\sigma}_{ {\cal N}}$, it is easy to see that the directional derivative $\phi:=g'(\overline{\sigma};\cdot)$ is actually a mixed symmetric mapping over the space ${\cal V}:=\R^{|a_1|}\times\ldots\times\R^{|a_r|}\times\R^{|b|}$.
Let ${\cal W}:=\S^{|a_{1}|}\times\ldots\times\S^{|a_{r}|}\times\V^{|b|\times(n-|a|)}$. Thus, the corresponding spectral operator $\Phi$ defined with respect to $\phi$ is globally Lipschitz continuous on the space ${\cal W}$. Hence, we know from (\ref{eq:GR-def-B-diff-spectral-op}) that for $H$ sufficiently small,
\begin{equation}\label{eq:nonsmoothpart-B-diff-Spectral-op-diag}
G_{R}(X)= {\widehat
	\Phi}(D(H))+O(\|H\|^{2})   ,
\end{equation} where
$D(H)=\left(S(H_{a_{1}a_{1}}),\ldots,S(H_{a_{r}a_{r}}), H_{b\bar{a}} \right)\in{\cal W}$ and ${\widehat
\Phi}$ is defined by (\ref{eq:def-Diag-Phi})

\bigskip
\noindent {(b)}
Next, consider the general case that $\overline{X}\in\V^{m\times n}$. For any $H\in\V^{m\times n}$,
 {we rewrite (\ref{eq:def-Y-Z-B-diff}) by using the singular value decomposition of $\overline{X}$ as follows:} $ {\widetilde{X} :=} [\Sigma(\overline{X})\quad 0 ]+\overline{U}^\T H\overline{V}=\overline{U}^\T U[\Sigma( {X}) \quad 0]V^\T \overline{V}$.
 Then, since $ \overline{U}$ and $ \overline{V}$ are unitary matrices, we know from (\ref{eq:nonsmoothpart-B-diff-Spectral-op-diag}) that
\begin{equation}\label{eq:nonsmoothpart-B-diff-Spectral-op}
G_{R}(X)=  {\overline{U} G_R(\widetilde{X})\overline{V}^\T= }
\overline{U} {\widehat
	\Phi}(D(H)) \overline{V}^\T  +O(\|H\|^{2}) ,
\end{equation}
where $D(H)=\left(S(\widetilde{H}_{a_{1}a_{1}}),\ldots,S(\widetilde{H}_{a_{r}a_{r}}), \widetilde{H}_{b\bar{a}} \right)$ and $\widetilde{H}=\overline{U}^\T H\overline{V}$. Thus, by combining
 {(\ref{eq:def-Psi})}, (\ref{eq:smoothpart-B-diff-Spectral-op}) and (\ref{eq:nonsmoothpart-B-diff-Spectral-op}) and noting that
$ {G(\overline{X})}=G_{S}(\overline{X})$, we obtain that for any  $H\in\V^{m\times n}$ sufficiently close to $0$,
\begin{eqnarray*}
&& G(X)-G(\overline{X})-G'(\overline{X};H)
\\[3pt]
&=&  {G_R(X) + G_S(X) - G_S(\overline{X})-G'(\overline{X};H)
= G_R(X) -\overline{U} {\widehat
	\Phi}(D(H)) \overline{V}^\T+ O(\|H\|^{2})
}
=O(\|H\|^{2}) ,
\end{eqnarray*}
where the directional derivative $G'(\overline{X};H)$ of $G$ at $\overline{X}$ along $H$ is given by \eqref{eq:def-Psi}. This implies that $G$ is $1$-order B-differentiable at $\overline{X}$.

(ii) Suppose that $G$ is $1$-order B-differentiable at $\overline{X}$. Let $(\overline{U},\overline{V})\in{\mathbb O}^{m\times n}(\overline{X})$ be fixed.  For any $h\in\R^{m}$, let $H=\overline{U}[{\rm Diag}(h)\quad 0]\overline{V}^\T \in\V^{m\times n}$. We know from \cite[Proposition 3]{DSSToh18} that for all $h$ sufficiently close to $0$, $G(\overline{X}+H)=\overline{U}{\rm Diag}(g(\overline{\sigma}+h))\overline{V}_1^\T$. Therefore, we know from the assumption that
\[
{\rm Diag}(g(\overline{\sigma}+h)-g(\overline{\sigma}))= \overline{U}^\T \left(G(\overline{X}+H)-G(\overline{X})\right)\overline{V}_1=\overline{U}^\T G'(\overline{X};H)\overline{V}_1+O(\|H\|^{2}) .
\]
This shows that $g$ is $1$-order B-differentiable at $\overline{\sigma}$. The proof  is completed.
\end{proof}

\section{G-semismoothness}\label{section:semismoothness}

Let ${\cal Z}$ and ${\cal Z}'$ be two finite dimensional real Euclidean spaces and ${\cal O}$ be an open set in ${\cal Z}$. Suppose that $F: {\cal O}  \subseteq {\cal Z}  \to  {\cal Z}^\prime$ is a locally Lipschitz continuous function on ${\cal O}$. Then, according to Rademacher's theorem, $F$ is almost everywhere differentiable (in the sense of Fr\'{e}chet)  in ${\cal O}$.  Let ${\cal D}_F$ be the set of points in ${\cal O}$ where $F$ is differentiable. Let $F^\prime(z)$ be the derivative of $F$ at $z\in {\cal D}_F$. Then the  {\it B(ouligand)-subdifferential} of $F$ at $z\in {\cal O}$ is denoted by \cite{Qi93}:
\[
\partial_{B}F(z):=\left\{ \lim_{{\cal D}_{F}\ni z^{k}\to z} F'(z^{k}) \right\}
\] and  the  {\it Clarke generalized Jacobian} of $F$ at $z\in {\cal O}$ \cite{Clarke83}  takes the form:
\[
\partial F(z)={\rm conv}\{\partial_{B} F(z)\} ,
\] where ``conv'' stands for the convex hull in the usual sense of convex analysis \cite{Rockafellar70}. The function $F$ is said to be G-semismooth  at a point $z \in {\cal O}$  if for any $y\to z $ and $V \in \partial F(y)$,
\[
F(y) -F(z) - V(y-z) = o(\|y- z\| )  .
\]
A stronger notion than G-semismoothness is $\rho$-order G-semismoothness with $\rho>0$. The function $F$ is said to be  $\rho$-order G-semismooth at $z$  if
for any $y\to z $ and $V \in \partial F(y)$,
\[
F(y) - F(z) - V(y- z) = O(\|y-z\|^{1+\rho})  .
\]
In particular, the function $F$ is said to be strongly G-semismooth at $z$ if $F$ is $1$-order G-semismooth at $z$. Furthermore, the  function $F$ is said to be  ($\rho$-order, strongly) semismooth at $z\in {\cal O}$  if (i)
the directional derivative of $F$ at $z$ along any direction $d\in {\cal Z}$, denoted by $F^\prime(z;d)$,  exists; and (ii) $F$ is   ($\rho$-order, strongly) G-semismooth.

The following result taken from \cite[Theorem 3.7]{SSun02} provides a convenient tool for proving the G-semismoothness of Lipschitz functions.
\begin{lemma}\label{lem:semismoothness-equiv}
	Let $F: {\cal O}  \subseteq {\cal Z} \to {\cal Z}^\prime $ be a locally Lipschitz continuous function on the open set ${\cal O}$,  {and  $\rho >0$ be} a constant. $F$ is $\rho $-order G-semismooth (G-semismooth) at $z$ if and only if  for any ${\cal D}_F\ni y\to  z $,
	\begin{equation}\label{eq:G-semismoothness-equiv}
	F(y) - F(z) - F^\prime(y)(y- z) = O(\|y- z\|^{1+\rho}) \quad \big(=o(\|y- z\|)\big)   .
	\end{equation}
\end{lemma}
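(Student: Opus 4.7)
My plan is to treat the two directions of the equivalence separately, with the nontrivial work concentrated in the $``\Longleftarrow"$ implication. For the forward direction, I would simply note that whenever $y\in{\cal D}_F$, the derivative $F'(y)$ is obtained as the limit of the trivial constant sequence $y^{k}\equiv y$ in ${\cal D}_F$, so $F'(y)\in\partial_{B}F(y)\subseteq\partial F(y)$. Specializing the definition of ($\rho$-order) G-semismoothness at $V=F'(y)$ then yields \eqref{eq:G-semismoothness-equiv} immediately, in both the $O$-version and $o$-version.

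For the converse direction, the idea is to propagate the residual bound from the dense set ${\cal D}_F$ to an arbitrary element of the Clarke Jacobian at an arbitrary nearby point by a two-step approximation. First, given any $V\in\partial F(y)={\rm conv}\,\partial_{B}F(y)$, Carath\'{e}odory's theorem produces a finite decomposition $V=\sum_{i=1}^{N}\lambda_{i}V_{i}$ with $V_{i}\in\partial_{B}F(y)$ and nonnegative weights $\lambda_i$ summing to one. Second, by the very definition of $\partial_{B}F(y)$, each $V_{i}$ is the limit of $F'(y^{i,k})$ along some sequence ${\cal D}_F\ni y^{i,k}\to y$. I would apply the hypothesis \eqref{eq:G-semismoothness-equiv} at each $y^{i,k}$ and then pass to the limit $k\to\infty$, using the local Lipschitz continuity of $F$ (to control $F(y^{i,k})\to F(y)$), the convergence $F'(y^{i,k})\to V_{i}$ as linear operators applied to $y^{i,k}-z\to y-z$, and the continuity of the norm, to obtain $\|F(y)-F(z)-V_{i}(y-z)\|=O(\|y-z\|^{1+\rho})$ (respectively $o(\|y-z\|)$) for each $i$. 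A triangle-inequality estimate on the convex combination then transfers the bound from each $V_{i}$ to $V$ with the same asymptotic order.

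The only technical wrinkle is to make the estimate uniform in $y$ throughout a neighborhood of $z$, so that taking the limit in $k$ does not destroy the asymptotic order. In the $\rho$-order case this is straightforward: I would fix a radius $\delta_{0}>0$ and constant $C>0$ from the hypothesis, restrict to $y$ with $\|y-z\|\le\delta_{0}/2$, and observe that for $k$ sufficiently large the approximating points $y^{i,k}$ lie in the ball of radius $\delta_{0}$ about $z$, so the hypothesis applies with the common constant $C$ and the estimate $\|y^{i,k}-z\|^{1+\rho}\to\|y-z\|^{1+\rho}$ passes cleanly to the limit. The $o$-version is handled in the same spirit but by first fixing $\varepsilon>0$ and extracting a matching $\delta(\varepsilon)$ from the hypothesis. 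I do not foresee a deeper obstacle; the whole argument is a standard Carath\'{e}odory-plus-limit transfer from differentiable data on ${\cal D}_F$ to the full Clarke subdifferential, together with the book-keeping just described.
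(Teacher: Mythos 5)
The paper does not actually prove this lemma---it is quoted directly from \cite[Theorem 3.7]{SSun02}---so there is no internal proof to compare against; your argument is correct and is essentially the standard proof of that cited result. The forward direction is exactly as you say ($F'(y)\in\partial_B F(y)\subseteq\partial F(y)$ at differentiable points), and your converse correctly transfers the estimate from ${\cal D}_F$ to an arbitrary $V\in\partial F(y)$ by writing $V$ as a finite convex combination of limits of Jacobians $F'(y^{i,k})$ at differentiable points near $y$, passing to the limit via the local Lipschitz bound on $\|F'(y^{i,k})\|$ and the convergence $F'(y^{i,k})\to V_i$, with the needed uniform constant--neighborhood (resp.\ $\varepsilon$--$\delta$) form of the $O$ (resp.\ $o$) hypothesis extracted as you describe.
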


Let  $\overline{X}\in{\cal N}$ be given. Assume that $g$ is locally Lipschitz continuous near $\overline{\sigma}=\sigma(\overline{X})$.  {Then} from Theorem \ref{thm:Lip-spectral-op} we know that the corresponding spectral operator $G$ is locally Lipschitz continuous near $\overline{X}$. The following theorem is on the G-semismoothness of the spectral operator $G$.

\begin{theorem}\label{thm:rho-order-G-semismooth-spectral-op}
	Suppose that $\overline{X}\in{\cal N}$ has the  {singular value} decomposition (\ref{eq:Y-eig-Z-SVD}).  Let $0<\rho\leq 1$ be given. $G$ is $\rho$-order G-semismooth at $\overline{X}$ if and only if $g$ is $\rho$-order G-semismooth at $\overline{\sigma}$.
\end{theorem}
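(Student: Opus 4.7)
The plan is to apply Lemma \ref{lem:semismoothness-equiv} in both directions so that we work only along sequences of differentiability points, where the Fr\'echet derivative formulas from \cite{DSSToh18} are available. As in Theorem \ref{thm:rho-order-B-diff-spectral-op}, I will carry out the argument for $\rho=1$; the general $0<\rho\le 1$ case is essentially identical. For the ``only if'' direction, fix $(\overline{U},\overline{V})\in{\mathbb O}^{m,n}(\overline{X})$ and, for $h\in\R^m$ small with $\overline{\sigma}+h$ a differentiability point of $g$, set $H:=\overline{U}[\mathrm{Diag}(h)\ 0]\overline{V}^\T$. By the diagonal reduction of \cite[Proposition 3]{DSSToh18}, $X:=\overline{X}+H$ is a differentiability point of $G$ and the derivative acts diagonally in the fixed frame, giving $\overline{U}^\T G'(X)(H)\overline{V}_1=\mathrm{Diag}(g'(\overline{\sigma}+h)h)$. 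Applying Lemma \ref{lem:semismoothness-equiv} to $G$ along such $X\to\overline{X}$ then forces $g(\overline{\sigma}+h)-g(\overline{\sigma})-g'(\overline{\sigma}+h)h=O(\|h\|^{1+\rho})$, and a second application of Lemma \ref{lem:semismoothness-equiv} delivers $\rho$-order G-semismoothness of $g$ at $\overline{\sigma}$.

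For the ``if'' direction, take any sequence ${\cal D}_G\ni X\to\overline{X}$ and decompose $G=G_S+G_R$ as in \eqref{eq:def-GS}. Since $G_S$ is twice continuously differentiable on a neighborhood of $\overline{X}$ by \cite[Lemma 1]{DSSToh18}, Taylor's theorem yields $G_S(X)-G_S(\overline{X})-G_S'(X)(X-\overline{X})=O(\|X-\overline{X}\|^2)$, which is harmless at order $1+\rho$. For $G_R$, I mirror the strategy from the proof of Theorem \ref{thm:rho-order-B-diff-spectral-op}(i): taking the SVD $X=U[\Sigma(X)\ 0]V^\T$ and $H=X-\overline{X}$, the orthogonal blocks $U_{a_l},V_{a_l},U_b$ and $[V_b\ V_c]$ satisfy the $O(\|H\|)$-perturbation bounds of \cite[Proposition 7]{DSToh10}, and $G'(X)$ can be written down explicitly using the divided-difference matrices ${\cal E}^0_1(\sigma(X))$, ${\cal E}^0_2(\sigma(X))$ and ${\cal F}^0(\sigma(X))$. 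The residual for $G_R$ then splits into two pieces: (i) an on-diagonal contribution governed by $g(\sigma(X))-g(\overline{\sigma})-g'(\sigma(X))(\sigma(X)-\overline{\sigma})$, which is $O(\|X-\overline{X}\|^{1+\rho})$ by Lemma \ref{lem:semismoothness-equiv} applied to $g$ at $\overline{\sigma}$ together with $\|\sigma(X)-\overline{\sigma}\|\le\|X-\overline{X}\|$; and (ii) off-diagonal ``rotation'' contributions arising from the fact that the singular vectors at $X$ differ from those at $\overline{X}$ by $O(\|H\|)$, combined with the locally Lipschitz bounds on $g$ and the divided-difference weights provided by Proposition \ref{prop:gigj}.

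The main obstacle is the compatibility in step (i)-(ii): one has to verify that at a differentiability point $X$ of $G$ the singular value vector $\sigma(X)$ is a differentiability point of $g$ (so that Lemma \ref{lem:semismoothness-equiv} can legitimately be applied to $g$ at $\overline{\sigma}$ along $\sigma(X)\to\overline{\sigma}$) and, more delicately, that the explicit form of $G'(X)$ in terms of ${\cal E}^0_1(\sigma(X))$, ${\cal E}^0_2(\sigma(X))$ and ${\cal F}^0(\sigma(X))$ exactly absorbs the block-diagonal part of $G(X)-G(\overline{X})$, leaving only the off-diagonal rotation terms, which are quadratic in $\|H\|$. This second-order control on the singular-vector rotations, together with the uniform Lipschitz bound from Theorem \ref{thm:Lip-spectral-op}, is precisely what converts $\rho$-order G-semismoothness of $g$ at $\overline{\sigma}$ into that of $G$ at $\overline{X}$.
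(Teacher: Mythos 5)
Your overall route is the same as the paper's: both implications are reduced via Lemma \ref{lem:semismoothness-equiv} to sequences of differentiability points, the ``$\Longrightarrow$'' direction uses the diagonal construction $H=\overline{U}[{\rm Diag}(h)\ \ 0]\overline{V}^\T$ together with the equivariance result \cite[Proposition 3]{DSSToh18} (the fact that such an $X$ lies in ${\cal D}_G$ is \cite[Theorem 4]{DSSToh18}, not Proposition 3, but this is a citation detail), and the ``$\Longleftarrow$'' direction rests on the splitting $G=G_S+G_R$ of \eqref{eq:def-GS} with the $C^2$ part $G_S$ disposed of by Taylor expansion. The ``$\Longrightarrow$'' part of your argument is complete and coincides with the paper's.

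The problem is in the ``$\Longleftarrow$'' direction: what you call ``the main obstacle'' is precisely the content of the theorem, and your proposal asserts it rather than proves it. Concretely, two things are needed that are not supplied by the tools you list. First, at $X\in{\cal D}_G$ the derivative is not expressed by the divided-difference matrices ${\cal E}^0_1(\sigma(X)),{\cal E}^0_2(\sigma(X)),{\cal F}^0(\sigma(X))$ alone: by \cite[Theorem 4]{DSSToh18} (which is also what guarantees $\sigma(X)\in{\cal D}_g$), $G'(X)H$ involves $g'(\sigma(X))$ through the matrices ${\cal E}_1,{\cal E}_2,{\cal F},{\cal C}$ of \cite[(33)--(36)]{DSSToh18}, and on the zero block $b$ of $\overline{X}$ one must distinguish the indices with $\sigma_i(X)>0$ from those with $\sigma_i(X)=0$ (the sets $b_1,b_2$ entering \eqref{eq:R-k-semismooth-spectral-op-3}); your outline treats the block structure of $\overline{X}$ as if it persisted at $X$. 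Second, and decisively, the $O(\|H\|)$ perturbation bounds on the singular-vector blocks from \cite[Proposition 7]{DSToh10}, combined with Proposition \ref{prop:gigj}, Theorem \ref{thm:Lip-spectral-op} and local Lipschitz continuity of $g$, control only the off-diagonal terms. To match the diagonal part one must identify $S(U_{a_l}^\T HV_{a_l})$ (and the corresponding $b$-blocks) with $\Sigma(X)_{a_la_l}-\Sigma(\overline{X})_{a_la_l}$ up to $O(\|H\|^{2})$, i.e.\ one needs the second-order relations \cite[(31)--(33) in Proposition 7]{DSToh10}, or equivalently the strong semismoothness of the singular value map $\sigma(\cdot)$ \cite{SSun03}, as in \eqref{eq:Delta-semismooth-spectral-op}. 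Only with this identification does the diagonal part of $G'(X)H$, namely $g'(\sigma(X))$ applied to ${\rm diag}(S(U^\T HV_1))$, agree to order $O(\|H\|^{1+\rho})$ with $g'(\sigma(X))(\sigma(X)-\overline{\sigma})$, so that Lemma \ref{lem:semismoothness-equiv} applied to $g$ along ${\cal D}_g\ni\sigma(X)\to\overline{\sigma}$ closes the estimate; Lipschitz bounds plus first-order rotation bounds alone do not give this, and this is exactly the computation the paper carries out between \eqref{eq:GX-GbarX-semismooth} and \eqref{eq:Delta-semismooth-spectral-op}.
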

\begin{proof}
	Without loss of generality, we only prove the result for the case that $\rho=1$.

$``\Longleftarrow"$ For any $H\in\V^{m\times n}$, denote $X=\overline{X}+H$. Let $U\in{\mathbb O}^{m}$ and $V\in{\mathbb O}^{n}$ be such that
\begin{equation}\label{eq:def-Y-Z-semismooth}
X=U[\Sigma(X)\quad 0]V^\T.
\end{equation}
Denote $\sigma=\sigma(X)$.  {Recall the mappings $G_{S}$ and $G_{R}$} defined in (\ref{eq:def-GS}). We know from \cite[Proposition 8]{DSToh10} that there exists an open neighborhood ${\cal B}\subseteq{\cal N}$ of $\overline{X}$ such that $G_{S}$ twice continuously differentiable on ${\cal B}$ and
\begin{eqnarray}
&& G_{S}(X)-G_{S}(\overline{X})=\sum_{l=1}^{r}\bar{g}_{l}\,{\cal U}'_{l}(X)\,H
+O(\|H\|^{2}) \nonumber\\
&=&\sum_{l=1}^{r} \bar{g}_{l}\left\{U[\Gamma_{l}(X)\circ S(U^\T HV_{1} )+\Xi_{l}(X)\circ
T(U^\T HV_{1})]V_{1}^\T +U(\Upsilon_{l}(X)\circ U^\T HV_{2} )V_{2}^\T \right\} +O(\|H\|^{2}) ,
\label{eq:smoothpart-semismooth-Spectral-op}
\end{eqnarray}
where for each $l\in\{1,\ldots,r\}$, $\Gamma_{l}(X)$, $\Xi_{l}(X)$  and $\Upsilon_{l}(X)$ are given by
\cite[(40)--(42)]{DSToh10}, respectively. By taking a smaller ${\cal B}$ if necessary, we
 {may} assume that for any $X\in{\cal B}$ and $l,l'\in\{1,\ldots,r\}$,
\begin{equation}\label{eq:sigma_X-diff}
\sigma_i(X)>0,\quad \sigma_i(X)\neq \sigma_j(X)\quad \forall\, i\in a_l,\ j\in a_{l'}\ {\rm and}\ l\neq l' .
\end{equation} Since $g$ is locally Lipschitz continuous near $\overline{\sigma}$, we know that for any $H$ sufficiently small,
\begin{equation}\label{eq:g-Lip-semismooth}
\bar{g}_{l}=g_{i}(\sigma)+O(\|H\|)\quad \forall\, i\in a_l,\quad l=1,\ldots,r .
\end{equation}
By noting that $U\in{\mathbb O}^{m}$ and $V\in{\mathbb O}^{n}$ are uniformly bounded, we know from \eqref{eq:smoothpart-semismooth-Spectral-op} and \eqref{eq:g-Lip-semismooth} that for any $X\in{\cal B}$ (shrinking ${\cal B}$ if necessary),
\begin{equation}\label{eq:GX-GbarX-semismooth}
G_{S}(X)-G_{S}(\overline{X})=U\left[{\cal E}^0_{1}\circ S(U^\T HV_{1} ) + {\cal E}^0_{2}\circ
T(U^\T HV_{1} ) \quad {\cal F}^0\circ U^\T HV_{2} \right]V^\T +O(\|H\|^{2}) ,
\end{equation}
where ${\cal E}^0_{1}$, ${\cal E}^0_{2}$ and ${\cal F}^0$ are the corresponding real matrices defined in \eqref{eq:def-matric-E1}--\eqref{eq:def-matric-F} (depending on $X$), respectively.

Let $X\in{\cal D}_{G}\cap{\cal B}$, where ${\cal D}_{G}$ is the set of points in $\V^{m\times n}$ for which $G$ is (F-)differentiable. Define the corresponding index sets in $\{1,\ldots,m\}$ for $X$ by $a':=\{i\mid \sigma_i(X)>0\}$ and $b':=\{i\mid \sigma_i(X)=0\}$. By \eqref{eq:sigma_X-diff}, we have
\begin{equation}
a'\supseteq a \quad {\rm and} \quad  b'\subseteq b .
\end{equation}
We know from  \cite[Theorem 4]{DSSToh18} that
\begin{equation}\label{eq:diff-Y-spectral-op-semismooth}
G'(X)H= U[{\cal E}_{1}\circ S(U^\T HV_1)+{\cal E}_{2}\circ T(U^\T HV_1)+ {\rm Diag}\left({\cal C}{\rm diag}(S(U^\T HV_1))\right)
\quad {\cal F}\circ U^\T HV_2 ]V^\T   ,
\end{equation}
where $\eta$, ${\cal E}_{1}$, ${\cal E}_{2}$, ${\cal F}$ and  $\cal C$ are defined by \cite[(33)--(36)]{DSSToh18} with respect to $\sigma$, respectively.  Denote $\Delta(H):=G'(X)H-(G_{S}(X)-G_{S}(\overline{X}))$.
Moreover, since there exists an integer $j\in\{0,\ldots,|b|\}$ such that $|a'|=|a|+j$, we can define two index sets $b_1:=\{|a|+1,\ldots,|a|+j\}$ and $b_2:=\{|a|+j+1,\ldots,|a|+|b|\}$ such that  $a'=a\cup b_1$ and $b'=b_2$.
From (\ref{eq:GX-GbarX-semismooth}) and (\ref{eq:diff-Y-spectral-op-semismooth}), we obtain that
\begin{equation}\label{eq:Delta2-semismooth-spectral-op}
\Delta(H)=U
{\widehat R} (H)
V^\T +O(\|H\|^{2}) ,
\end{equation}
where  ${\widehat R} (H)\in \V^{m\times n}$ is defined by
\[
{\widehat R}(H):= \left[\begin{array}{cc} {\rm Diag}\left( R_{1}(H), \dots, R_{r}(H) \right)  & 0
\\
0 &  R_{r+1}(H)
\end{array}\right],
\]
\begin{eqnarray}\label{eq:R-k-semismooth-spectral-op-2}
R_{l}(H) &=& ({\cal E}_{1})_{a_{l}a_{l}}\circ S(U_{a_{l}}^\T HV_{a_{l}})+{\rm Diag}\left(({\cal C}{\rm diag}(S(U^\T HV_1)))_{a_la_l}\right), \  l=1,\ldots,r,
\\
\label{eq:R-k-semismooth-spectral-op-3}
R_{r+1}(H)&=&\left[\begin{array}{ccc}
({\cal E}_{1})_{b_1b_1}\circ S(U_{b_1}^\T HV_{b_1})+{\rm Diag}\left(({\cal C}{\rm diag}(S(U^\T HV_1)))_{b_1b_1}\right)  & 0 & 0 \\[3pt]
0 & \gamma U_{b_2}^\T HV_{b_2}  &\; \gamma U_{b_2}^\T HV_{2}
\end{array}\right]
\quad
\end{eqnarray}
and $\gamma:=(g'(\sigma))_{ii}$ for any $i\in b_2$.
By (\ref{eq:Y-eig-Z-SVD}),  we obtain from (\ref{eq:def-Y-Z-semismooth}) that
\[
\left[ \Sigma(\overline{X}) \quad 0\right]+\overline{U}^\T H\overline{V}=\overline{U}^\T U\left[ \Sigma(X)\quad 0 \right]V^\T \overline{V} .
\] Let $\widehat{H}:=\overline{U}^\T H\overline{V}$, $\widehat{U}:=\overline{U}^\T U$ and $\widehat{V}:=\overline{V}^\T V$. Then, $U^\T HV=\widehat{U}^\T \overline{U}^\T H\overline{V}\widehat{V}=\widehat{U}^\T \widehat{H}\widehat{V}$. We know from \cite[(31) in Proposition 7]{DSToh10} that there exist $Q_{l}\in{\mathbb O}^{|a_{l}|}$, $l=1,\ldots,r$ and $M\in{\mathbb O}^{|b|}$, $N\in{\mathbb O}^{n-|a|}$  such that
\begin{eqnarray*}
	&U_{a_{l}}^\T HV_{a_{l}}=\widehat{U}_{a_{l}}^\T  \widehat{H} \widehat{V}_{a_{l}}=Q_{l}^\T  \widehat{H}_{a_{l}a_{l}}Q_{l}+O(\|H\|^{2}),\quad l=1,\ldots,r ,&
	\\[3pt]
	&\left[ U_{b}^\T HV_{b}\quad U_{b}^\T HV_{2} \right]
	= \left[ \widehat{U}_{b}^\T  \widehat{H} \widehat{V}_{b}\quad \widehat{U}_{b}^\T  \widehat{H} \widehat{V}_{2} \right]=M^\T \left[\widehat{H}_{bb}\quad \widehat{H}_{bc}\right]N+O(\|H\|^{2}) .&
\end{eqnarray*}
Moreover, from \cite[(32) and (33) in Proposition 7]{DSToh10}, we obtain that
\begin{eqnarray*}
	&S(U_{a_{l}}^\T HV_{a_{l}})=Q_{l}^\T  S(\widehat{H}_{a_{l}a_{l}})Q_{l}+O(\|H\|^{2})=\Sigma(X)_{a_{l}a_{l}}-\Sigma(\overline{X})_{a_{l}a_{l}}+O(\|H\|^{2}),\quad l=1,\ldots,r ,&
	\\[3pt]
	&\left[ U_{b}^\T HV_{b}\quad U_{b}^\T HV_{2} \right]=M^\T \left[\widehat{H}_{bb}\quad \widehat{H}_{bc}\right]N=\left[\Sigma(X)_{bb}-\Sigma(\overline{X})_{bb} \quad 0\right]+O(\|H\|^{2}) .&
\end{eqnarray*}
Denote $h=\sigma'(X;H)\in\R^{m}$. Since the  {singular} value functions are strongly semismooth \cite{SSun03}, we know that
\begin{eqnarray*}
	& S(U_{a_{l}}^\T HV_{a_{l}})={\rm Diag}(h_{a_{l}})+O(\|H\|^{2}), \quad l=1,\ldots,r ,&
	\\[3pt]
	&S(U_{b_1}^\T HV_{b_1})={\rm Diag}(h_{b_1})+O(\|H\|^{2}),\quad \left[ U_{b_2}^\T HV_{b_2}\quad U_{b_2}^\T HV_{2} \right]=\left[ {\rm Diag}(h_{b_2})\quad 0 \right]+O(\|H\|^{2}).
	\nonumber
	&
\end{eqnarray*}
Therefore, since ${\cal C}=g'(\sigma)-{\rm Diag}(\eta)$, by (\ref{eq:R-k-semismooth-spectral-op-2}) and (\ref{eq:R-k-semismooth-spectral-op-3}), we obtain from (\ref{eq:Delta2-semismooth-spectral-op}) that
\begin{equation}\label{eq:Delta-semismooth-spectral-op}
 {\Delta(H)}
=U\left[{\rm Diag}\left(g'(\sigma)h\right)  \quad 0\right]V^\T +O(\|H\|^{2}) .
\end{equation}

On the other hand, for $X$ sufficiently close to $\overline{X}$, we have ${\cal U}_{l}(X)={\sum_{i\in a_{l}}}u_{i}v_{i}^\T $, $l=1,\ldots,r$. Therefore,
\begin{equation}\label{eq:GR-semismooth-spectral-op-1}
G_{R}(X)=G(X)-G_{S}(X)=\sum_{l=1}^{r}\sum_{i\in a_{l}}[g_i(\sigma)-g_i(\overline{\sigma})]u_{i}v_{i}^\T +\sum_{i\in b}g_i(\sigma)u_iv_i^\T  .
\end{equation}
 {Note that by definition, $G_R(\overline{X}) = 0$.}
 We know from \cite[Theorem 4]{DSSToh18} that $G$ is differentiable at $X$ if and only if $g$ is differentiable at $\sigma$.  Since $g$ is $1$-order G-semismooth at $\overline{\sigma}$ and $\sigma(\cdot)$ is strongly semismooth, we obtain that for any $X\in{\cal D}_{G}\cap{\cal B}$ (shrinking ${\cal B}$ if necessary),
\begin{equation*}
g(\sigma)-g(\overline{\sigma})=g'(\sigma)(\sigma-\overline{\sigma})+O(\|H\|^{2})=g'(\sigma)(h+O(\|H\|^{2}))+O(\|H\|^{2})=g'(\sigma)h+O(\|H\|^{2}) .
\end{equation*} Then, since $U\in{\mathbb O}^{m}$ and $U\in{\mathbb O}^{n}$ are uniformly bounded, we obtain from (\ref{eq:GR-semismooth-spectral-op-1}) that
\[
G_{R}(X)=U\left[{\rm Diag}\left(g'(\sigma)h\right)  \quad 0\right]V^\T +O(\|H\|^{2}) .
\] Thus, from (\ref{eq:Delta-semismooth-spectral-op}), we obtain that $\Delta(H)=G_{R}(X)+O(\|H\|^{2})$. That is, for any $X\in{\cal D}_{G}$ converging to $\overline{X}$,
\[
G(X)-G(\overline{X})-G'(X)H =  {G_R(X) + G_S(X) - G_S(\overline{X})  - G'(X)H}
= G_{R}(X) -\Delta(H)=O(\|H\|^{2}) .
\]

$``\Longrightarrow"$ Suppose that $G$ is $1$-order G-semismooth at $\overline{X}$.  Let $(\overline{U},\overline{V})\in{\mathbb O}^{m\times n}(\overline{X})$ be fixed.  Assume that $\sigma=\overline{\sigma}+h\in {\cal D}_{g}$ and $h\in\R^{m}$ is sufficiently small. Let $X=\overline{U}\left[{\rm Diag}(\sigma)\quad 0 \right] \overline{V}^\T $ and $H=\overline{U}\left[{\rm Diag}(h)\quad 0 \right] \overline{V}^\T $. Then, $X\in{\cal D}_{G}$ and converges to $\overline{X}$ if $h$ goes to zero. We know from \cite[Proposition 3]{DSSToh18} that for all $h$ sufficiently close to $0$, $G(X)=\overline{U}{\rm Diag}(g({\sigma}))\overline{V}_1^\T$. Therefore,   for any $h$ sufficiently close to $0$,
\[
{\rm Diag}(g(\overline{\sigma}+h)-g(\overline{\sigma}))= \overline{U}^\T  \left(G(X)-G(\overline{X})\right)\overline{V}_1=\overline{U}^\T G'(X)H\overline{V}_1+O(\|H\|^{2}) .
\]
Hence, since obviously ${\rm Diag}(g'(\sigma)h)=\overline{U}^\T G'(X)H\overline{V}_1$, we know that for $h$ sufficiently small,
$
g(\overline{\sigma}+h)-g(\overline{\sigma})=g'(\overline{\sigma})h+O(\|h\|^{2})
$.   {Thus,} $g$ is $1$-order G-semismooth at $\overline{\sigma}$.
\end{proof}

It is worth mentioning that for matrix optimization problems, we are able to obtain the semismoothness of the proximal point mapping $P_{f}$ defined by \eqref{eq:def-proximal} by employing the corresponding results on tame functions. We first recall the following concept on the {\it o(rder)-minimal structure} (cf. \cite[Definition 1.4]{Coste99}).

\begin{definition}\label{def:o-minimal-structure}
	An o-minimal structure of $\mathbb{R}^{n}$ is a sequence $ {\cal M}=\{ {\cal M}_{i}\}_{i=1}^{\infty}$
	 {such that} for each $i\ge 1$, ${\cal M}_{i}$ is a collection of subsets of $\mathbb{R}^{i}$ satisfying the following axioms.
	\begin{itemize}
		\item[(i)] For every $i$, ${\cal M}_{i}$ is closed under Boolean operators (finite unions, intersections and complement).
		\item[(ii)] If $A\in {\cal M}_{i}$ and $B\in{\cal M}_{i'}$, then $A\times B$ belongs to ${\cal M}_{i+i'}$.
		\item[(iii)] ${\cal M}_{i}$ contains all the subsets of the form $\{x\in\mathbb{R}^{i}\mid p(x)=0\}$, where $p:\mathbb{R}^{i}\to\mathbb{R}$ is a polynomial function.
		\item[(iv)] Let $\Pi: \mathbb{R}^{i+1}\to\mathbb{R}^{i}$ be the projection on the first $i$ coordinates. If $A\in{\cal M}_{i+1}$, then $\Pi(A)\in{\cal M}_{i}$.
		\item[(v)] The elements of ${\cal M}_{1}$ are exactly the finite union of points and intervals.
	\end{itemize}
	The elements of o-minimal structure are called definable sets. A map $F: A\subseteq\mathbb{R}^{n}\to\mathbb{R}^{m}$ is called definable if its graph is a definable subset of $\mathbb{R}^{n+m}$.
\end{definition}
A set of $\mathbb{R}^{n}$ is called  {\it tame} with respect to an o-minimal structure, if its intersection with the interval $[-r,r]^{n}$ for every $r > 0$ is definable in this structure, i.e., the element of this structure. A mapping is tame if its graph is tame. One most  {frequently} used o-minimal structure is the class of  semialgebraic subsets of $\mathbb{R}^{n}$. A set in $\mathbb{R}^{n}$ is {\it semialgebraic} if it is a finite union of sets of the form
\[
\left\{ x\in\mathbb{R}^{n}\,|\, p_{i}(x)>0,\ q_{j}(x)=0,\quad i=1,\ldots,a,\ j=1,\ldots,b \right\}\,,
\]
where $p_{i}:\mathbb{R}^{n}\to\mathbb{R}$, $i=1,\ldots,a$ and $q_{j}:\mathbb{R}^{n}\to\mathbb{R}$, $j=1,\ldots,b$ are polynomials. A mapping is semialgebraic if its graph is  semialgebraic.

For tame functions, we have the following proposition  of the semismoothness \cite{BDLewis09,Ioffe08}.
\begin{proposition}\label{prop:tame-semismooth}
	Let $\xi:\mathbb{R}^{n}\to\mathbb{R}^{m}$ be a locally Lipschitz continuous mapping.
	\begin{itemize}
		\item[(i)] If $\xi$ is tame, then $\xi$ is semismooth.
		
		\item[(ii)] If $\xi$ is semialgebraic, then $\xi$ is $\gamma$-order semismooth with some $\gamma> 0$.
	\end{itemize}
\end{proposition}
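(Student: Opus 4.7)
The plan is to derive both statements from the structural properties of tame (respectively semialgebraic) sets, specifically from the existence of a Whitney $C^k$ stratification of the graph of $\xi$. Since $\xi$ is tame and locally Lipschitz, its graph $\mathrm{Gr}(\xi)\subset\mathbb{R}^{n+m}$ is a tame set. Invoking the stratification theorem for tame sets, I would decompose $\mathrm{Gr}(\xi)$ into finitely many $C^k$ manifolds $\{M_i\}$ satisfying Whitney's condition~(a). Projecting onto $\mathbb{R}^n$ produces a stratification of a neighborhood of $z$ into finitely many $C^k$ submanifolds on each of which $\xi$ is $C^k$-smooth.

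Next, I would establish directional differentiability of $\xi$ at any $z\in\mathbb{R}^n$. Fix a direction $d$; by the tame curve selection lemma applied to the ray $\{z+td:t>0\}$ relative to the stratification, the ray enters (and eventually stays in) a single stratum for all sufficiently small $t>0$. Smoothness of $\xi$ on this stratum, together with local Lipschitz continuity, then yields the existence of $\xi'(z;d)$. Together with part (a) of the definition of semismoothness, this reduces the problem to verifying the residual estimate in (b).

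For the residual estimate, the core tool is the projection formula (due to Bolte--Daniilidis--Lewis and Ioffe), which asserts that for any $y$ in the top-dimensional stratum through which it passes, the Clarke generalized Jacobian $\partial\xi(y)$ is controlled by the Jacobian of $\xi$ restricted to that stratum, with error controlled by Whitney regularity. Given $y\to z$ and $V\in\partial\xi(y)$, I would connect $y$ to $z$ by a tame curve $\gamma:[0,1]\to\mathbb{R}^n$ with controlled length (arising, e.g., from cell decomposition in the o-minimal structure) and apply a nonsmooth mean value estimate along $\gamma$. Whitney's condition (a) forces the stratumwise gradients along $\gamma$ to approach a common limit as $y\to z$, and together with the Lipschitz bound this yields
\[
\xi(y)-\xi(z)-V(y-z)=o(\|y-z\|),
\]
establishing part (i). The main obstacle here will be making the residual estimate uniform in the choice of $V\in\partial\xi(y)$ as $y$ drifts between strata; controlling this uniformity is precisely where the Whitney condition and the projection formula do the essential work.

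For part (ii), the upgrade to $\rho$-order G-semismoothness uses that semialgebraic tame geometry yields quantitative \L{}ojasiewicz-type inequalities that are absent in the general tame setting. Concretely, in the semialgebraic case the tame curve $\gamma$ connecting $y$ to $z$ can be chosen with length $O(\|y-z\|)$, and the stratumwise gradient fluctuation obeys a power-law bound $O(\|y-z\|^{\gamma})$ for some $\gamma>0$ determined by the semialgebraic structure. Combining these bounds in the mean-value argument above upgrades $o(\|y-z\|)$ to $O(\|y-z\|^{1+\gamma})$, giving $\gamma$-order semismoothness for some $\gamma>0$.
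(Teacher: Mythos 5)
First, note that the paper does not prove this proposition at all: it is quoted directly from the cited references (Bolte--Daniilidis--Lewis, \emph{Tame functions are semismooth}, and Ioffe), so the relevant benchmark is the argument in those works. Your sketch starts from the same raw materials (a Whitney $C^1$ stratification of the graph, smoothness of $\xi$ on strata, the projection formula, one-variable definability for directional derivatives), but the step that actually carries the proof is not correct as you state it. Whitney's condition (a) does \emph{not} force the stratumwise gradients to approach a common limit as $y\to z$; for $\xi(x)=|x|$ at $z=0$ the gradients on the two open strata are $-1$ and $+1$ and never merge, yet $|x|$ is (strongly) semismooth. Semismoothness is not a statement about gradient continuity across strata: the estimate $\xi(y)-\xi(z)-V(y-z)=o(\|y-z\|)$ works because $V\in\partial\xi(y)$ is taken at the \emph{nearby} point $y$, and (by the projection formula) its component along the tangent space of the stratum containing $y$ is exactly $\nabla(\xi|_M)(y)$, while the normal component is harmless only because, along a definable curve approaching $z$, the secant direction $(y-z)/\|y-z\|$ becomes asymptotically tangent to that stratum. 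Your mean-value argument along a tame curve joining $y$ to $z$ does not engage this: the mean value theorem sees subdifferentials at interior points of the curve, whereas $V$ sits at the endpoint $y$, so the uniformity in $V$ that you yourself flag as ``the main obstacle'' is left unresolved rather than resolved.

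The missing idea is the reduction of an arbitrary violating sequence to a definable object. The proofs in the cited papers argue by contradiction: if the estimate fails, definable choice (curve selection) produces a definable curve $y(t)\to z$ together with a definable selection $v(t)\in\partial\xi(y(t))$ along which the violation persists; the monotonicity theorem makes $t\mapsto y(t)$ and $t\mapsto \xi(y(t))$ eventually $C^1$ inside a single stratum, the projection formula gives $\frac{d}{dt}\xi(y(t))=\langle v(t),\dot y(t)\rangle$ up to a normal term, and the standard asymptotics of definable curves (arc length comparable to $\|y(t)-z\|$, alignment of $\dot y(t)$ with the secant direction) yield the contradiction. Without this reduction the o-minimal machinery never applies to your arbitrary sequence $y\to z$, $V\in\partial\xi(y)$. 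The same gap propagates to part (ii): there the exponent comes from the growth dichotomy (Puiseux-type expansion) for one-variable semialgebraic functions applied to the definable violation function along the selected curve, i.e. a definable function of one variable is either eventually zero or bounded below by $c\,t^{\alpha}$; your appeal to a \L{}ojasiewicz bound on ``gradient fluctuation'' along short curves is the right intuition but again presupposes the curve-selection reduction you have not made (and, as a minor point, reuses $\gamma$ both for the curve and for the exponent). So the architecture is recognizably that of the cited proofs, but as written the central estimate rests on a false mechanism and the decisive definable-choice step is absent.
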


Let ${\cal Z}$ be a finite dimensional Euclidean space. If the closed proper convex function $f:{\cal Z}\to(-\infty,\infty]$ is semialgebraic, then the Moreau-Yosida regularization $\psi_{f}(x):=\displaystyle\min_{z\in {\cal Z}} \left\{ f(z)+\frac{1}{2}\|z-x\|^{2}\right\} $, $x\in{\cal Z}$ of $f$
is semialgebraic. Moreover, since the graph of the corresponding proximal point mapping $P_{f}$ is of the form
\[
{\rm gph}\,P_{f}= \left\{ (x,z)\in{\cal Z}\times{\cal Z}\,|\, f(z)+\frac{1}{2}\|z-x\|^{2}=\psi_{f}(x)\right\}\,,
\]
we know that $P_{f}$ is also semialgebraic (cf. \cite{Ioffe08}). Since $P_{f}$ is globally Lipschitz continuous, according to Proposition \ref{prop:tame-semismooth} (ii), it yields that $P_{f}$ is $\gamma$-order semismooth with some $\gamma> 0$. On the other hand, most unitarily invariant closed proper convex functions $f:{\cal X}\to(-\infty,\infty]$ in MOPs are semialgebraic. For example, it is easy to verify that the indicator function $\delta_{\mathbb{S}_{+}^{n}}(\cdot)$ of the positive semidefinite (PSD) matrix cone  and the matrix Ky Fan $k$-norm $\|\cdot\|_{(k)}$ (the sum of $k$-largest singular values of matrices) are all semialgebraic. Therefore, we know that the corresponding  proximal point mapping $P_{f}$ defined by \eqref{eq:def-proximal} for MOPs are $\gamma$-order semismooth with some $\gamma> 0$. However,  {since$\gamma$ is
not known explicitly,} by this approach, we may not  {be able to} show the strong semismoothness of  {the} spectral operator $G=P_f$ even  {if} the corresponding symmetric mapping $g$ is strongly semismooth.

\section{Characterization of Clarke's generalized Jacobian}\label{section:Clake g-J}

Let  $\overline{X}\in{\cal N}$ be given. In this section, we assume that $g$ is locally Lipschitz continuous near $\overline{\sigma}=\sigma(\overline{X})$ and directionally differentiable at $\overline{\sigma}$. Therefore, from Theorem \ref{thm:Lip-spectral-op} and \cite[Theorem 3 and Remark 1]{DSSToh18}, we know that the corresponding spectral operator $G$ is locally Lipschitz continuous near $\overline{X}$ and  directionally differentiable at $\overline{X}$. Furthermore, we define the function $d:\R^{m}\to\R^{m}$ by
\begin{equation}\label{eq:def_d}
d(h):=g(\overline{\sigma}+h)-g(\overline{\sigma})-g'(\overline{\sigma};h),\quad h\in\R^{m} .
\end{equation}
Consequently, we know that the function $d$ is also a mixed symmetric mapping, with respect to
$ \mathbb{P}^{|a_1|}\times\ldots\times\mathbb{P}^{|a_r|}\times\pm\mathbb{P}^{|b|}$,
over ${\cal V}=\R^{|a_1|}\times\ldots\times\R^{|a_r|}\times\R^{|b|}$.
Again, since $g$ is locally Lipschitz continuous near $\overline{\sigma}$ and  {directionally} differentiable at $\overline{\sigma}$, we know from \cite{Shapiro90} that $g$ is B-differentiable at $\overline{\sigma}$. Thus,  $d$ is differentiable at zero with the derivative $d'(0)=0$. Furthermore, if we assume that the function $d$ is also strictly differentiable at zero, then we have
\begin{equation}\label{eq:theta-strict-diff}
\lim_{w,w'\to 0\atop w\neq w'}\frac{d(w)-d(w')}{\|w-w'\|}=0 .
\end{equation} By using the mixed symmetric property of $d$, one can easily obtain the following results. We omit the details of the proof here.

\begin{lemma}\label{lem:limit-deri-theta}
	Let $d:\R^m\to\R^m$ be the function given by \eqref{eq:def_d}. Suppose that $d$ is strictly differentiable at zero. Let $\{w^k\}$ be a given sequence in $\R^m$ converging to zero. Then, if there exist $i,j\in a_{l}$ for some $l\in\{1,\ldots,r\}$ or $i, j\in b$ such that $w^k_i\neq w^k_j$ for all $k$ sufficiently large, then
	\begin{equation}\label{eq:limit-deri-theta-1}
	\lim_{k\to \infty}\frac{d_i(w^k)-d_j(w^k)}{w^k_i-w^k_j}=0 ;
	\end{equation} if there exist $i,j \in b$ such that $w^k_i+w^k_j\neq 0$ for all $k$ sufficiently large, then
	\begin{equation}\label{eq:limit-deri-theta-2}
	\lim_{k\to \infty}\frac{d_i(w^k)+d_j(w^k)}{w^k_i+w^k_j}=0 ;
	\end{equation}
	and if there exists $i\in b$ such that $w^k_i\neq 0$ for all $k$ sufficiently large, then
	\begin{equation}\label{eq:limit-deri-theta-3}
	\lim_{k\to \infty}\frac{d_i(w^k)}{w^k_i}=0 .
	\end{equation}
\end{lemma}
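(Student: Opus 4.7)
The overall plan is to exploit the two hypotheses in tandem: strict differentiability of $d$ at $0$ with $d'(0)=0$ provides
\[
\|d(w)-d(w')\|=o(\|w-w'\|)\quad \text{as } w,w'\to 0,
\]
as recorded in \eqref{eq:theta-strict-diff}, while the mixed-symmetric property of $d$ lets me manufacture, for each sequence $\{w^k\}$, a companion sequence $\{\tilde w^k\}$ obtained by applying a carefully chosen element $Q$ of the symmetry group $\mathbb{P}^{|a_1|}\times\cdots\times\mathbb{P}^{|a_r|}\times\pm\mathbb{P}^{|b|}$. The symmetry will be selected so that the target quantity ($d_i(w^k)-d_j(w^k)$, $d_i(w^k)+d_j(w^k)$, or $d_i(w^k)$) rewrites as $d_i(w^k)-d_i(\tilde w^k)$ up to sign. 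Since $\tilde w^k\to 0$ as well, the one-sided estimate $|d_i(w^k)-d_i(\tilde w^k)|\le\|d(w^k)-d(\tilde w^k)\|$ combined with \eqref{eq:theta-strict-diff} reduces each claim to comparing $\|w^k-\tilde w^k\|$ with the denominator appearing in the stated limit.

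For \eqref{eq:limit-deri-theta-1}, with $i,j$ in the same block $a_l$ (respectively both in $b$), I take $Q$ to be the permutation that exchanges the $i$th and $j$th coordinates, noting that $Q\in\mathbb{P}^{|a_l|}$ (resp.\ $Q\in\pm\mathbb{P}^{|b|}$). Setting $\tilde w^k:=Qw^k$, mixed symmetry gives $d_i(\tilde w^k)=d_j(w^k)$, so
\[
|d_i(w^k)-d_j(w^k)|=|d_i(w^k)-d_i(\tilde w^k)|\le \|d(w^k)-d(\tilde w^k)\|,
\]
while $\|w^k-\tilde w^k\|=\sqrt{2}\,|w^k_i-w^k_j|$, and \eqref{eq:theta-strict-diff} yields the claim.

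For \eqref{eq:limit-deri-theta-2}, with $i,j\in b$, I use the signed permutation $Q\in\pm\mathbb{P}^{|b|}$ that simultaneously swaps coordinates $i,j$ and negates both of them. Then $\tilde w^k:=Qw^k$ satisfies $\tilde w^k_i=-w^k_j$ and $\tilde w^k_j=-w^k_i$, so $d_i(\tilde w^k)=-d_j(w^k)$, giving
\[
|d_i(w^k)+d_j(w^k)|=|d_i(w^k)-d_i(\tilde w^k)|\le \|d(w^k)-d(\tilde w^k)\|,
\]
together with $\|w^k-\tilde w^k\|=\sqrt{2}\,|w^k_i+w^k_j|$. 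For \eqref{eq:limit-deri-theta-3}, I take $Q\in\pm\mathbb{P}^{|b|}$ to be the sign-flip of the $i$th coordinate alone; then $d_i(\tilde w^k)=-d_i(w^k)$ and $\|w^k-\tilde w^k\|=2|w^k_i|$, so $2|d_i(w^k)|=|d_i(w^k)-d_i(\tilde w^k)|=o(|w^k_i|)$.

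There is no serious obstacle in this argument; the only care needed is in matching each of the three claims with the appropriate element of the symmetry group (pure transposition inside an $a_l$-block, sign-swap inside $b$, and pure sign-flip inside $b$) and in verifying that the chosen $Q$ genuinely fixes all coordinates outside the pair $\{i,j\}$, so that $\|w^k-\tilde w^k\|$ reduces cleanly to the expected factor of $|w^k_i\pm w^k_j|$ or $|w^k_i|$. Once the correct $Q$ is identified, each case is a one-line application of \eqref{eq:theta-strict-diff}.
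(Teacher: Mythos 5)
Your proof is correct and is exactly the argument the paper has in mind: the paper omits the details, saying only that the result follows from the mixed symmetric property of $d$ together with \eqref{eq:theta-strict-diff}, and your swap/sign-swap/sign-flip companion vectors $\tilde w^k=Qw^k$ are the strict-differentiability analogue of the vectors $t$, $\hat t$, $s$ used in the paper's proof of Proposition \ref{prop:gigj}. No gaps; the choices of $Q$ and the norm computations $\|w^k-\tilde w^k\|=\sqrt{2}\,|w^k_i\mp w^k_j|$ and $2|w^k_i|$ are all as intended.
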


Again, since the spectral operator $G$ is locally Lipschitz continuous near $\overline{X}$, we know that $\Psi=G'(\overline{X};\cdot)$ is globally Lipschitz continuous (cf. \cite[Theorem A.2]{Robinson87} or \cite[Lemma 2.2]{QSun93}). Therefore, $\partial_B \Psi(0)$ and $\partial \Psi(0)$ are well-defined. Furthermore, we have the following characterization of   the B-subdifferential   and Clarke's subdifferential
of the spectral operator $G$ at $\overline{X}$.

\begin{theorem}\label{thm:B-subdiff-spectral-op}
	Suppose that the given $\overline{X}\in{\cal N}$ has the decomposition (\ref{eq:Y-eig-Z-SVD}). Suppose that there exists an open neighborhood ${\cal B}\subseteq\R^{m}$ of $\overline{\sigma}$ in $\hat{\sigma}_{\cal N}$ such that $g$ is differentiable at $\sigma\in{\cal B}$ if and only if $g'(\overline{\sigma};\cdot)$ is differentiable at $\sigma-\overline{\sigma}$. Assume further that the function $d:\R^{m}\to\R^{m}$ defined by \eqref{eq:def_d} is strictly differentiable at zero. Then, we have
	\[
	\partial_{B}G(\overline{X})=\partial_{B}\Psi(0) \quad {\rm and} \quad \partial G(\overline{X})=\partial\Psi(0) .
	\]
\end{theorem}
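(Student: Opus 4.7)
The second identity $\partial G(\overline{X})=\partial \Psi(0)$ follows from the first by taking convex hulls, since by Clarke's theorem the generalized Jacobian equals $\mathrm{conv}(\partial_B\cdot)$, so I would focus on proving $\partial_B G(\overline{X})=\partial_B\Psi(0)$. The plan is to build a one-to-one correspondence between the differentiability points of $G$ near $\overline{X}$ and those of $\Psi$ near $0$, and to show that the corresponding derivatives coincide asymptotically. The hypothesis that $g$ is differentiable at $\sigma\in{\cal B}$ if and only if $\phi:=g'(\overline{\sigma};\cdot)$ is differentiable at $\sigma-\overline{\sigma}$, combined with the characterization of F-differentiability of spectral operators in \cite[Theorem 4]{DSSToh18}, tells us that $G$ is differentiable at $X=\overline{X}+H$ (for $X$ close to $\overline{X}$) if and only if $\Psi$ is differentiable at $H$. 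This provides the desired bijective correspondence between limiting sequences.

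For the inclusion $\partial_B G(\overline X)\supseteq \partial_B\Psi(0)$, I would pick $V\in\partial_B\Psi(0)$ with $H^k\to 0$, $\Psi$ differentiable at $H^k$, and $\Psi'(H^k)\to V$. Setting $X^k:=\overline{X}+H^k$, the equivalence above guarantees that $G$ is differentiable at $X^k$. It then remains to show $G'(X^k)\to V$, which I would obtain via the explicit derivative formula \cite[Theorem 4]{DSSToh18}: $G'(X^k)$ is built from matrices ${\cal E}_1,{\cal E}_2,{\cal F},{\cal C}$ associated with $g$ at $\sigma(X^k)$, while $\Psi'(H^k)$ is built from the analogous matrices associated with $\phi$ at the appropriate singular values of blocks of $\overline{U}^\T H^k\overline{V}$. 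Writing $g=\phi+d+\text{const}$ componentwise (where $d$ is defined in \eqref{eq:def_d}), the entrywise difference between these two collections of matrices is controlled exactly by the expressions appearing in Lemma \ref{lem:limit-deri-theta}. The strict differentiability of $d$ at zero, through \eqref{eq:limit-deri-theta-1}--\eqref{eq:limit-deri-theta-3}, drives each of these differences to zero as $H^k\to 0$, which yields $G'(X^k)-\Psi'(H^k)\to 0$ and hence $G'(X^k)\to V$.

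The reverse inclusion $\partial_B G(\overline{X})\subseteq\partial_B\Psi(0)$ is handled symmetrically: given $V\in\partial_B G(\overline X)$ with $X^k\to\overline{X}$, $G$ differentiable at $X^k$, and $G'(X^k)\to V$, set $H^k:=X^k-\overline{X}$; the biconditional on differentiability gives $H^k\in{\cal D}_\Psi$, and the same matrix-by-matrix comparison together with Lemma \ref{lem:limit-deri-theta} gives $\Psi'(H^k)\to V$, so $V\in\partial_B\Psi(0)$.

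The main obstacle is carrying out the matrix-level comparison $G'(X^k)-\Psi'(H^k)\to 0$ cleanly. Concretely, one must match the singular-value blocks of $X^k$ with the corresponding eigenvalue/singular-value blocks of $(S(\widetilde{H}^k_{a_la_l}),\widetilde{H}^k_{b\bar a})$ (with $\widetilde{H}^k=\overline{U}^\T H^k\overline{V}$), verify that the entries of the divided-difference matrices ${\cal E}_i$, ${\cal F}$ and of ${\cal C}$ for $g$ converge to those for $\phi$ (via Lemma \ref{lem:limit-deri-theta} applied to the differences of the $g$- and $\phi$-values divided by the corresponding gaps/sums/values of singular values), and combine with the strong semismoothness of $\sigma(\cdot)$ to transfer these limits back to quantities depending on $X^k$. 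The $G_S$-part of the derivative is twice continuously differentiable by \cite[Lemma 1]{DSSToh18} and so contributes the same limit in both formulations; the delicate part is the $G_R$-part, which involves the singular/eigenvalue decomposition of the small blocks where the strict differentiability of $d$ is exactly what is needed.
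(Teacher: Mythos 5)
There is a genuine gap at the heart of your plan: the claimed correspondence ``$G$ is differentiable at $X=\overline{X}+H$ if and only if $\Psi$ is differentiable at $H$'' does not follow from the stated hypothesis, and is false in general. By \cite[Theorem 4]{DSSToh18}, $G$ is differentiable at $\overline{X}+H$ iff $g$ is differentiable at $\sigma(\overline{X}+H)$, which by the hypothesis is equivalent to $\phi=g'(\overline{\sigma};\cdot)$ being differentiable at $\sigma(\overline{X}+H)-\overline{\sigma}$. On the other hand, by \eqref{eq:def-Psi} and \cite[Theorem 7]{DSSToh18}, $\Psi$ is differentiable at $H$ iff $\Phi$ is differentiable at $D(H)$, i.e., iff $\phi$ is differentiable at $\kappa(D(H))=\big(\lambda(S(\widetilde{H}_{a_1a_1})),\ldots,\lambda(S(\widetilde{H}_{a_ra_r})),\sigma(\widetilde{H}_{b\bar a})\big)$. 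The two test points $\sigma(\overline{X}+H)-\overline{\sigma}$ and $\kappa(D(H))$ agree only up to $O(\|H\|^{2})$, and differentiability at a point is not stable under such perturbations of the point. For instance, with $m=1$, $n=2$, $\overline{X}=(1,0)$ and $H=(0,t)$, one has $\kappa(D(H))=0$ while $\sigma(\overline{X}+H)-\overline{\sigma}=\sqrt{1+t^{2}}-1>0$; nothing in the hypothesis ties the differentiability of $\phi$ at these two distinct points together. Consequently both of your key steps --- ``setting $X^k:=\overline{X}+H^k$, the equivalence guarantees $G$ is differentiable at $X^k$'' and, in the converse direction, ``$H^k:=X^k-\overline{X}$ lies in ${\cal D}_\Psi$'' --- are unjustified, and with them the whole identification of ${\cal D}_G$ near $\overline{X}$ with $\overline{X}+{\cal D}_\Psi$ near $0$.

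The paper avoids this by never using $X^k-\overline{X}$ (resp.\ $\overline{X}+C^k$) as the companion point. Given $X^k\in{\cal D}_G$ with SVD factors $U^k,V^k$ converging (along a subsequence) to $U^\infty=\overline{U}M$, $V^\infty=\overline{V}N$ with block-diagonal $M={\rm Diag}(Q_1,\ldots,Q_r,Q')$, $N={\rm Diag}(Q_1,\ldots,Q_r,Q'')$, it sets $w^k:=\sigma(X^k)-\overline{\sigma}$ and builds $W^k_l=Q_l{\rm Diag}(w^k_{a_l})Q_l^\T$, $W^k_{r+1}=Q'[{\rm Diag}(w^k_b)\ \ 0]{Q''}^\T$ and the corresponding $C^k$, so that $\kappa(D(C^k))=w^k$ \emph{exactly}; only then does the hypothesis yield $C^k\in{\cal D}_\Psi$. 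Conversely, from $C^k\in{\cal D}_\Psi$ it extracts $w^k=\kappa(D(C^k))$ and defines $X^k:=\overline{U}M[{\rm Diag}(\overline{\sigma}+w^k)\ \ 0]N^\T\overline{V}^\T$, so that $\sigma(X^k)=\overline{\sigma}+w^k$ exactly and $G$ is differentiable at $X^k$. The rotations $M,N$ are also what make the derivative comparison legitimate: one compares $\Delta^k$ with $M^\T\widehat{\Phi}'(W^k)D(H)N$, not $G'(X^k)$ with $\Psi'(X^k-\overline{X})$ directly. The second half of your plan --- the entrywise eight-case comparison driven by Lemma \ref{lem:limit-deri-theta} and by $\lim_{k\to\infty}\phi'(w^k)=\lim_{k\to\infty}g'(\sigma^k)$ (from strict differentiability of $d$ at zero), plus the reduction of $\partial G(\overline{X})=\partial\Psi(0)$ to the B-subdifferential identity by taking convex hulls --- does match the paper and would go through once the correspondence of differentiability points is repaired as above.
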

\begin{proof}
	We only need to prove the result for the B-subdifferentials. Let ${\cal V}$ be any element of $\partial_{B}G(\overline{X})$. Then, there exists a sequence $\{X^k\}$ in ${\cal D}_{G}$ converging to $\overline{X}$ such that ${\cal V}=\displaystyle{\lim_{k\to\infty}} G'(X^k)$.
	 {Now we present two preparatory steps before proving
	that ${\cal V} \in \partial_B \Psi(0)$.}
	
	\noindent{ (a)}
	For each $X^k$, let $U^k\in{\mathbb O}^{m}$ and $V^k\in{\mathbb O}^{n}$ be the  matrices such that
\[
X^k=U^k[\Sigma(X^k)\quad 0](V^k)^\T  .
\] For each $X^k$, denote $\sigma^k=\sigma(X^k)$. Then, we know from \cite[Theorem 4]{DSSToh18} that for each $k$, $\sigma^{k}\in{\cal D}_{g}$. For $k$ sufficiently large,  we know from \cite[Lemma 1]{DSSToh18} that for each $k$, $G_{S} $ is twice continuously differentiable at $\overline{X}$. Thus,  $\displaystyle{\lim_{k\to\infty}}G'_{S}(X^k)=G'_{S}(\overline{X})$. Hence, we have for any $H\in\V^{m\times n}$,
\begin{equation}\label{eq:limit-GS-spectral-op}
\lim_{k\to \infty}G'_{S}(X^k)H=G'_{S}(\overline{X})H=\overline{U}\left[\overline{\cal E}^0_1\circ S(\overline{U}^\T H\overline{V}_{1})+\overline{\cal E}^0_2\circ T(\overline{U}^\T H\overline{V}_{1})\quad \overline{\cal F}^0\circ \overline{U}^\T H\overline{V}_{2}  \right]\overline{V}^\T  .
\end{equation}
Moreover, we know that the mapping $G_{R}=G-G_{S}$ is also differentiable at each $X^k$ for $k$ sufficiently large. Therefore, we have
\begin{equation}\label{eq:U-Gs-Gr}
{\cal V}=\lim_{k\to \infty}G'(X^k)=G'_{S}(\overline{X})+\lim_{k\to \infty}G_{R}'(X^k) .
\end{equation}
From the continuity of the singular value function $\sigma(\cdot)$, by taking a subsequence if necessary, we assume that for each $X^k$ and $l,l'\in\{1,\ldots,r\}$, $\sigma_i(X^k)>0$, $\sigma_{i}(X^k)\neq \sigma_{j}(X^k)$ for any $i\in a_{l}$, $j\in a_{l'}$ and $l\neq l'$. Since $\{U^{k}\}$ and $\{V^{k}\}$ are uniformly bounded, by taking subsequences if necessary, we may also assume that  $\{U^{k}\}$ and $\{V^{k}\}$ converge and denote the limits by $U^{\infty}\in{\mathbb O}^{m}$ and $V^{\infty}\in{\mathbb O}^{n}$, respectively. It is clear that $(U^{\infty},V^{\infty})\in{\mathbb O}^{m,n}(\overline{X})$. Therefore, we know from \cite[Proposition 5]{DSToh10} that there exist $Q_{l}\in{\mathbb O}^{|a_{l}|}$, $l=1,\ldots,r$,  $Q'\in{\mathbb O}^{|b|}$ and $Q''\in{\mathbb O}^{n-|a|}$ such that $U^{\infty}=\overline{U}M$ and $V^{\infty}=\overline{V}N$, where $M={\rm Diag}(Q_{1},\ldots,Q_{r},Q')\in{\mathbb O}^{m}$ and $N={\rm Diag}(Q_{1},\ldots,Q_{r},Q'')\in{\mathbb O}^{n}$. Let $H\in\V^{m\times n}$ be arbitrarily given. For each $k$, denote $\widetilde{H}^k:=(U^k)^\T HV^k$. Since $\{(U^k,V^k)\}\in{\mathbb O}^{m,n}(X^k)$ converges to $(U^{\infty},V^{\infty})\in{\mathbb O}^{m,n}(\overline{X})$, we know that ${\lim_{k\to\infty}}\widetilde{H}^k=(U^{\infty})^\T HV^{\infty}$.
For  notational simplicity, we denote $\widetilde{H}:=\overline{U}^\T H\overline{V}$ and $\widehat{H}:=(U^{\infty})^\T HV^{\infty}$.

For $k$ sufficiently large, we know from \cite[Proposition 8]{DSToh10} and \cite[(38) in Theorem 4]{DSSToh18} that for any $H\in\V^{m\times n}$, $G_R'(X^k)H=U^k\Delta^k(V^k)^\T $ with
\[
\Delta^k:=\left[\begin{array}{cc}
{\rm Diag}\left(\Delta_1^k ,\dots,\Delta_{r}^k  \right) & 0\\
0  & \Delta_{r+1}^k
\end{array}\right]\in\V^{m\times n} ,
\]
where for each $k$, $\Delta_l^k=({\cal E}_{1}(\sigma^k))_{a_{l}a_{l}}\circ S(\widetilde{H}^k_{a_{l}a_{l}})+{\rm Diag}(({\cal C}(\sigma){\rm diag}(S(\widetilde{H}^k)))_{a_{l}})$, $l=1,\ldots,r$,
\[
\Delta_{r+1}^k=\left[({\cal E}_{1}(\sigma^k))_{bb}\circ S(\widetilde{H}^k_{bb})+{\rm Diag}(({\cal C}(\sigma^k){\rm diag}(S(\widetilde{H}^k)))_{b})+({\cal E}_{2}(\sigma^k))_{bb}\circ T(\widetilde{H}^k_{bb})\quad ({\cal F}(\sigma^k))_{bc}\circ \widetilde{H}^k_{bc}\right]
\]
and ${\cal E}_{1}(\sigma^k)$, ${\cal E}_{2}(\sigma^k)$, ${\cal F}(\sigma^k)$ and ${\cal C}(\sigma^k)$ are defined for $\sigma^k$ by \cite[(34)--(36)]{DSSToh18}, respectively. Again, since $\{U^{k}\}$ and $\{V^{k}\}$ are uniformly bounded, we know that
\begin{equation}\label{eq:Gr'H-limit}
\lim_{k\to \infty}G_{R}'(X^k)H=U^{\infty}(\lim_{k\to\infty}\Delta^k)(V^{\infty})^\T =\overline{U}M(\lim_{k\to\infty}\Delta^k)N^\T \overline{V}^\T  .
\end{equation}

\noindent {(b)}
For each $k$, denote $w^k:=\sigma^k-\overline{\sigma}\in\R^{m}$. Moreover, for each $k$, we can define $W^k_l:=Q_l{\rm Diag}(w^k_{a_l})Q_l^\T \in\S^{|a_l|}$, $l=1,\ldots,r$ and $W^k_{r+1}:=Q'[{\rm Diag}(w^k_b)\quad 0]Q''^\T \in\V^{|b|\times(n-|a|)}$. Therefore, it is clear that for each $k$, $W^k:=(W^k_1,\ldots,W^k_l,W^k_{r+1})\in{\cal W}$ and $\kappa(W^k)=w^k$, where ${\cal W}=\S^{|a_{1}|}\times\ldots\times\S^{|a_{r}|}\times\V^{|b|\times(n-|a|)}$. Moreover, since ${\lim_{k\to \infty}}\sigma^k=\overline{\sigma}$, we know that ${\lim_{k\to \infty}}W^k=0$ in ${\cal W}$.
From the assumption, we know that $\phi=g'(\overline{\sigma};\cdot)$ and $d(\cdot)$ are differentiable at each $w^k$ and $\phi'(w^{k})=g'(\sigma^{k})-d'(w^{k})$ for all $w^{k}$. Since $d$ is strictly differentiable at zero, it can be checked easily that $\lim_{k\to\infty}d'(w^{k})=d'(0)=0$. By taking a subsequence if necessary, we may assume that $\lim_{k\to\infty}g'(\sigma^k)$ exists. Therefore, we have
\begin{equation}\label{eq:lim-g'-phi'}
\lim_{k\to\infty}\phi'(w^k)=\lim_{k\to\infty}g'(\sigma^k) .
\end{equation}
Since $\Phi$ is the spectral operator with respect to the mixed symmetric mapping $\phi$, from \cite[Theorem 7]{DSSToh18} we know that $\Phi$ is differentiable at $W\in{\cal W}$  if and only if $\phi$ is differentiable at $\kappa(W)$.
Recall that  ${\widehat \Phi}: {\cal W}\to\V^{m\times n}$ is defined by (\ref{eq:def-Diag-Phi}). Then, for $k$ sufficiently large, $ {\widehat \Phi}$ is differentiable at $W^k$. Moreover, for each $k$, we  define the matrix $C^k\in\V^{m\times n}$ by
\[
C^k=\overline{U} \left[\begin{array}{cc} {\rm Diag}\left( W^k_{1},\dots,W^k_{r}\right) & 0 \\[2mm]
0 & W^k_{r+1}
\end{array}\right]
\overline{V}^\T  .
\]  Then, we know that for $k$ sufficiently large, $\Psi $ is differentiable at $C^k$ and ${\lim_{k\to \infty}}C^k=0$ in $\V^{m\times n}$.
Thus, we know from \eqref{eq:def-Psi} that for each $k$,
\[
\Psi'(C^k)H=G'_S(\overline{X})H+\overline{U}\left[{\widehat \Phi}'(W^k)D(H)\right]\overline{V}^\T
\quad\forall \; H\in\V^{m\times n}  ,
\]
where $D(H)=\left(S(\widetilde{H}_{a_{1}a_{1}}),\ldots,S(\widetilde{H}_{a_{r}a_{r}}), \widetilde{H}_{b\bar{a}} \right)$ with $\widetilde{H}=\overline{U}^\T H\overline{V}$ and ${\widehat \Phi^\prime}(W^k)D(H)$ can be derived from \cite[Theorem 7]{DSSToh18}.
By comparing with \eqref{eq:U-Gs-Gr} and \eqref{eq:Gr'H-limit},
we know that
 { ${\cal V}\in\partial_B \Psi(0)$ if we can} show that
\begin{equation}\label{eq:K=limRt-1}
\lim_{k\to\infty}\Delta^k=\lim_{k\to\infty}M^\T {\widehat \Phi}'(W^k)D(H)N .
\end{equation}

 {To show that (\ref{eq:K=limRt-1}) holds, we consider eight different cases.}
For any $(i,j)\in\{1,\ldots,m\}\times\{1,\ldots,n\}$, consider the following cases.

{\bf  Case 1:} $i=j$. It is easy to check that for each $k$,
\[
(\Delta^k)_{ii}=(g'(\sigma^k)h^k)_i \quad {\rm and}\quad  \left(M^\T {\widehat \Phi}'(W^k)D(H)N\right)_{ii}=(\phi'(w^k)\widehat{h})_i ,
\]
where $h^k=\left({\rm diag}(S(\widetilde{H}^{k}_{aa})),{\rm diag}(\widetilde{H}^{k}_{bb})\right)$ and $\widehat{h}=\left({\rm diag}(S(\widehat{H}_{aa})),{\rm diag}(\widehat{H}_{bb})\right)$. Therefore, we know from \eqref{eq:lim-g'-phi'} that
\[
\lim_{k\to\infty}(\Delta^k)_{ii}=\lim_{k\to\infty}(g'(\sigma^k)h^k)_i=\lim_{k\to\infty}(\phi'(w^k)\widehat{h})_i=\lim_{k\to\infty}\left(M^\T {\widehat \Phi}'(W^k)D(H)N\right)_{ii} .
\]

{\bf  Case 2:} $i,j\in a_{l}$ for some $l\in\{1,\ldots,r\}$, $i\neq j$ and $\sigma^k_{i}\neq\sigma^k_{j}$ for $k$ sufficiently large. We obtain that for $k$ sufficiently large,
\begin{eqnarray*}
	&(\Delta^k)_{ij}=\frac{g_i(\sigma^k)-g_j(\sigma^k)}{\sigma^k_{i}-\sigma^k_{j}}(S(\widetilde{H}^{k}_{a_la_l}))_{ij},&
	\\
	&
	\left(M^\T {\widehat \Phi}'(W^k)D(H)N\right)_{ij}=\frac{\phi_i(w^k)-\phi_j(w^k)}{w^k_{i}-w^k_{j}}(S(\widehat{H}_{a_la_l}))_{ij}  .
	&
\end{eqnarray*}
Since $\overline{\sigma}_{i}=\overline{\sigma}_{j}$ and $g_{i}(\overline{\sigma})=g_{j}(\overline{\sigma})$, we know that for $k$ sufficiently large,
\begin{eqnarray}
\frac{g_i(\sigma^k)-g_j(\sigma^k)}{\sigma^k_{i}-\sigma^k_{j}}&=&\frac{g_i(\overline{\sigma}+w^k)-g_j(\overline{\sigma}+w^k)}{w^k_i-w^k_j}=\frac{g_i(\overline{\sigma}+w^k)-g_i(\overline{\sigma})+g_j(\overline{\sigma})-g_j(\overline{\sigma}+w^k)}{w^k_i-w^k_j}\nonumber\\
&=&\frac{d_i(w^k)-d_j(w^k)}{w^k_i-w^k_j}+\frac{\phi_i(w^k)-\phi_j(w^k)}{w^k_i-w^k_j} .\label{eq:difference-g-theta-phi-1}
\end{eqnarray} Therefore, we know from \eqref{eq:limit-deri-theta-1} that
\[
\lim_{k\to\infty}\frac{g_i(\sigma^k)-g_j(\sigma^k)}{\sigma^k_{i}-\sigma^k_{j}}(S(\widetilde{H}^{k}_{a_la_l}))_{ij}=\lim_{k\to\infty}\frac{\phi_i(w^k)-\phi_j(w^k)}{w^k_{i}-w^k_{j}}(S(\widehat{H}_{a_la_l}))_{ij} ,
\] which implies $\displaystyle{\lim_{k\to\infty}}(\Delta^k)_{ij}=\displaystyle{\lim_{k\to\infty}}\left(M^\T {\widehat \Phi}'(W^k)D(H)N\right)_{ij}$.

{\bf  Case 3:} $i,j\in a_{l}$ for some $l\in\{1,\ldots,r\}$, $i\neq j$ and $\sigma^k_{i}= \sigma^k_{j}$ for $k$ sufficiently large. We have for $k$ sufficiently large,
\begin{eqnarray*}
	&(\Delta^k)_{ij}=\left((g'(\sigma^k))_{ii}-(g'(\sigma^k))_{ij}\right)(S(\widetilde{H}^{k}_{a_la_l}))_{ij},&
	\\[3pt]
	&\left(M^\T {\widehat \Phi}'(W^k)D(H)N\right)_{ij}=\left((\phi'(w^k))_{ii}-(\phi'(w^k))_{ij}\right)(S(\widehat{H}_{a_la_l}))_{ij}  .&
\end{eqnarray*}
Therefore, we obtain from \eqref{eq:lim-g'-phi'} that
\[
\lim_{k\to\infty}\left((g'(\sigma^k))_{ii}-(g'(\sigma^k))_{ij}\right)(S(\widetilde{H}^{k}_{a_la_l}))_{ij}=\lim_{k\to\infty}\left((\phi'(w^k))_{ii}-(\phi'(w^k))_{ij}\right)(S(\widehat{H}_{a_la_l}))_{ij} .
\]
Thus, we have $\displaystyle{\lim_{k\to\infty}}(\Delta^k)_{ij}=\displaystyle{\lim_{k\to\infty}}\left(M^\T {\widehat \Phi}'(W^k)D(H)N\right)_{ij}$.

{\bf  Case 4:} $i,j\in b$, $i\neq j$ and $\sigma^k_{i}=\sigma^k_{j}>0$ for $k$ sufficiently large.
We have for $k$ large,
\begin{eqnarray*}
	&(\Delta^k)_{ij}=\left((g'(\sigma^k))_{ii}-(g'(\sigma^k))_{ij}\right)(S(\widetilde{H}^{k}_{bb}))_{ij}
	+\frac{g_i(\sigma^k)+g_j(\sigma^k)}{\sigma^k_{i}+\sigma^k_{j}}(T(\widetilde{H}^{k}_{bb}))_{ij},&
	\\[3pt]
	&\left(M^\T {\widehat \Phi}'(W^k)D(H)N\right)_{ij}=\left((\phi'(w^k))_{ii}-(\phi'(w^k))_{ij}\right)(S(\widehat{H}_{bb}))_{ij}+\frac{\phi_i(w^k)+\phi_j(w^k)}{w^k_{i}+w^k_{j}}(T(\widehat{H}_{bb}))_{ij} .&
\end{eqnarray*}
Since $\overline{\sigma}_{i}=\overline{\sigma}_{j}=0$ and $g_i(\overline{\sigma})=g_j(\overline{\sigma})=0$, we get
\begin{eqnarray}
\frac{g_i(\sigma^k)+g_j(\sigma^k)}{\sigma^k_{i}+\sigma^k_{j}}
&=&\frac{d_i(w^k)+d_j(w^k)}{w^k_i+w^k_j}+\frac{\phi_i(w^k)+\phi_j(w^k)}{w^k_i+w^k_j} .\label{eq:difference-g-theta-phi-2}
\end{eqnarray}
Therefore, we know from (\ref{eq:limit-deri-theta-2}) and (\ref{eq:lim-g'-phi'}) that $\displaystyle{\lim_{k\to\infty}}(\Delta^k)_{ij}=\displaystyle{\lim_{k\to\infty}}\left(M^\T {\widehat \Phi}'(W^k)D(H)N\right)_{ij}$.

{\bf  Case 5:} $i,j\in b$, $i\neq j$ and $\sigma^k_{i}\neq\sigma^k_{j}$ for $k$ sufficiently large. For large $k$, we have
\begin{eqnarray*}
	&(\Delta^k)_{ij}=\frac{g_i(\sigma^k)-g_j(\sigma^k)}{\sigma^k_{i}-\sigma^k_{j}}(S(\widetilde{H}^{k}_{bb}))_{ij}+
	\frac{g_i(\sigma^k)+g_j(\sigma^k)}{\sigma^k_{i}+\sigma^k_{j}}(T(\widetilde{H}^{k}_{bb}))_{ij},&
	\\[3pt]
	&\left(M^\T {\widehat \Phi}'(W^k)D(H)N\right)_{ij}=\frac{\phi_i(w^k)-\phi_j(w^k)}{w^k_{i}-w^k_{j}}(S(\widehat{H}_{bb}))_{ij}+\frac{\phi_i(w^k)+\phi_j(w^k)}{w^k_{i}+w^k_{j}}(T(\widehat{H}_{bb}))_{ij} .&
\end{eqnarray*}
Thus, by (\ref{eq:difference-g-theta-phi-1}) and (\ref{eq:difference-g-theta-phi-2}), we know from (\ref{eq:limit-deri-theta-1}) and (\ref{eq:limit-deri-theta-2}) that $\displaystyle{\lim_{k\to\infty}}(\Delta^k)_{ij}=\displaystyle{\lim_{k\to\infty}}\left(M^\T {\widehat \Phi}'(W^k)D(H)N\right)_{ij}$.

{\bf  Case 6:} $i,j\in b$, $i\neq j$ and $\sigma^k_{i}=\sigma^k_{j}=0$ for $k$ sufficiently large. We know  {that for $k$ sufficiently large},
\begin{eqnarray*}
	&(\Delta^k)_{ij}=\left((g'(\sigma^k))_{ii}-(g'(\sigma^k))_{ij}\right)(S(\widetilde{H}^{k}_{bb}))_{ij}+
	(g'(\sigma^k))_{ii}(T(\widetilde{H}^{k}_{bb}))_{ij},
	&
	\\[3pt]
	&\left(M^\T {\widehat \Phi}'(W^k)D(H)N\right)_{ij}=\left((\phi'(w^k))_{ii}-(\phi'(w^k))_{ij}\right)(S(\widehat{H}_{bb}))_{ij}+(\phi'(w^k))_{ii}(T(\widehat{H}_{bb}))_{ij} .&
\end{eqnarray*}
Again, we obtain from \eqref{eq:lim-g'-phi'} that $\displaystyle{\lim_{k\to\infty}}(\Delta^k)_{ij}=\displaystyle{\lim_{k\to\infty}}\left(M^\T {\widehat \Phi}'(W^k)D(H)N\right)_{ij}$.

{\bf  Case 7:} $i\in b$, $j\in c$ and $\sigma^k_{i}>0$ for $k$ sufficiently large. We have for $k$ sufficiently large,
\begin{equation*}
(\Delta^k)_{ij}=\frac{g_i(\sigma^k)}{\sigma^k_{i}}(\widetilde{H}^{k}_{bc})_{ij},
\quad \left(M^\T {\widehat \Phi}'(W^k)D(H)N\right)_{ij}=\frac{\phi_i(w^k)}{w^k_{i}}(\widehat{H}_{bc})_{ij} .
\end{equation*}
Since $\overline{\sigma}_{i}=0$ and $g_{i}(\overline{\sigma})=0$, we get
\[
\frac{g_i(\sigma^k)}{\sigma^k_{i}}=\frac{g_i(\overline{\sigma}+w^k)-g_i(\overline{\sigma})}{w^k_{i}}=\frac{d_i(w^k)}{w^k_{i}}+\frac{\phi_i(w^k)}{w^k_{i}} .
\] Therefore,  by (\ref{eq:limit-deri-theta-3}), we obtain that $\displaystyle{\lim_{k\to\infty}}(\Delta^k)_{ij}=\displaystyle{\lim_{k\to\infty}}\left(M^\T {\widehat \Phi}'(W^k)D(H)N\right)_{ij}$.

{\bf  Case 8:} $i\in b$, $j\in c$ and $\sigma^k_{i}=0$ for $k$ sufficiently large. We have
for $k$ sufficiently large,
\begin{eqnarray*}
	&(\Delta^k)_{ij}=(g'(\sigma^k))_{ii}(\widetilde{H}^{k}_{bc})_{ij},\quad
	\left(M^\T {\widehat \Phi}'(W^k)D(H)N\right)_{ij}=(\phi'(w^k))_{ii}(\widehat{H}_{bc})_{ij} .&
\end{eqnarray*}
Therefore, by (\ref{eq:lim-g'-phi'}), we obtain that $\displaystyle{\lim_{k\to\infty}}(\Delta^k)_{ij}=\displaystyle{\lim_{k\to\infty}}\left(M^\T {\widehat \Phi}'(W^k)D(H)N\right)_{ij}$.

Thus, we know that \eqref{eq:K=limRt-1} holds. Therefore, by \eqref{eq:U-Gs-Gr} and \eqref{eq:Gr'H-limit}, we obtain that ${\cal V}\in\partial_{B}\Psi(0)$.

\medskip
Conversely, suppose that ${\cal V}\in\partial_{B}\Psi(0)$ is arbitrarily chosen. Then, from the definition of $\partial_{B}\Psi(0)$, we know that there exists a sequence $\{C^k\}\subseteq\V^{m\times n}$ converging to zero such that $\Psi$ is differentiable at each $C^k$ and ${\cal V}={\lim_{k\to\infty}}\Psi'(C^k)$. For each $k$, we know from \eqref{eq:def-Psi} that $\Psi$ is differentiable at $C^k$ if and only if the spectral operator $\Phi:{\cal W}\to {\cal W}$ is differentiable at $W^k:=D(C^k)=\left(S(\widetilde{C}^k_{a_{1}a_{1}}),\ldots,S(\widetilde{C}^k_{a_{r}a_{r}}),\widetilde{C}^k_{b\bar{a}}\right)\in{\cal W}$, where for each $k$, $\widetilde{C}^k=\overline{U}^\T C^k\overline{V}$.
Moreover, for each $k$, we have the following decompositions
\begin{eqnarray*}
	S(\widetilde{C}^k_{a_{l}a_{l}})=Q^k_{l}\Lambda(S(\widetilde{C}^k_{a_{l}a_{l}}))(Q^k_{l})^\T ,\; l=1,\ldots,r,
	\quad
	\widetilde{C}^k_{b\bar{a}}={Q'}^k\left[ \Sigma(\widetilde{C}^k_{b\bar{a}})\quad 0 \right]({Q''}^k)^\T  ,
\end{eqnarray*}
where $Q^k_{l}\in{\mathbb O}^{|a_{l}|}$,   ${Q'}^k\in{\mathbb O}^{|b|}$ and ${Q''}^k\in{\mathbb O}^{n-|a|}$. For each $k$,
let
\begin{eqnarray*}
	&w^k:=\left(\lambda(S(\widetilde{C}^k_{a_{1}a_{1}})),\ldots,\lambda(S(\widetilde{C}^k_{a_{r}a_{r}})),
	\sigma(\widetilde{C}^k_{b\bar{a}})\right)\in\R^m,&
	\\
	&M^k:={\rm Diag}\Big(Q_1^k,\dots,Q_{r}^k,  {Q^\prime}^k\Big) \in{\mathbb O}^m,
	\quad
	N^k:= {\rm Diag}\Big(Q_1^k,\dots,Q_{r}^k,  {Q^{\prime\prime}}^k\Big)
	\in{\mathbb O}^n.&
\end{eqnarray*}
Since $\{M^k\}$ and $\{N^k\}$ are uniformly bounded, by taking subsequences if necessary, we know that there exist $Q_l\in{\mathbb O}^{|a_l|}$, $Q'\in{\mathbb O}^{|b|}$ and $Q''\in{\mathbb O}^{n-|b|}$ such that
\[
\lim_{k\to\infty}M^k=M:=
{\rm Diag}\Big(Q_1,\dots,Q_{r},  {Q^\prime}\Big)
\quad
\lim_{k\to\infty}N^k=N:=
{\rm Diag}\Big(Q_1,\dots,Q_{r},  {Q^{\prime\prime}}\Big).
\]
For each $k$, by  \cite[Theorem 7]{DSSToh18}, we know that for any $H\in\V^{m\times n}$,
\begin{equation}\label{eq:Psi'CkH}
\Psi'(C^k)H = \overline{U}\left[\overline{\cal E}^0_1\circ S(\overline{U}^\T H\overline{V}_{1})+\overline{\cal E}^0_2\circ T(\overline{U}^\T H\overline{V}_{1})\quad \overline{\cal F}^0\circ \overline{U}^\T H\overline{V}_{2}  \right]\overline{V}^\T + \overline{U}\left[{\widehat \Phi}'(W^k)D(H)\right]\overline{V}^\T,
\end{equation}
where $D(H)=\left(S(\widetilde{H}_{a_{1}a_{1}}),\ldots,S(\widetilde{H}_{a_{r}a_{r}}), \widetilde{H}_{b\bar{a}} \right)$ with $\widetilde{H}=\overline{U}^\T H\overline{V}$. Let $R^k: =\Phi'_k(W^k)D(H)$,  $k=1, \ldots,  r+1$.

For each $k$, define $\sigma^k:=\overline{\sigma}+w^k\in\R^m$.
Since ${\lim_{k\to\infty}}w^k=0$ and for each $k$, $w^k_i\geq 0$ for all $i\in b$,
we have $\sigma^k\geq 0$ for $k$ sufficiently large.
Therefore, for $k$ sufficiently large, we are able to define
\[
X^k:=\overline{U}M[{\rm Diag}(\sigma^k)\quad 0]N^\T \overline{V}^\T \in\V^{m\times n} .
\]
For simplicity, denote $U=\overline{U}M\in{\mathbb O}^{m}$ and $ V=\overline{V}N\in{\mathbb O}^{n}$. It is clear that the sequence $\{X^k\}$ converges to $\overline{X}$. From the assumption, we know that $g$ is differentiable at each $\sigma^k$ and $d$ is differentiable at each $w^{k}$ with $g'(\sigma^{k})=\phi'(w^{k})+d'(w^{k})$ for all $\sigma^{k}$. Therefore, by \cite[Theorem 4]{DSSToh18}, we know that $G$ is differentiable at each $X^k$. By taking subsequences if necessary, we may assume that $\lim_{k\to\infty}\phi'(w^{k})$ exists. Thus, since $d$ is strictly differentiable at zero,  we know that \eqref{eq:lim-g'-phi'} holds.
Since the derivative formula \eqref{eq:Gs'-formula} is independent of $(U,V)\in{\mathbb O}^{m,n}(\overline{X})$, we know from \cite[(38) in Theorem 4]{DSSToh18} that for any $H\in\V^{m\times n}$,
\begin{eqnarray}
G'(X^k)H&=&\overline{U}\left[\overline{\cal E}^0_1\circ S(\overline{U}^\T H\overline{V}_{1})+\overline{\cal E}^0_2\circ T(\overline{U}^\T H\overline{V}_{1})\quad \overline{\cal F}^0\circ \overline{U}^\T H\overline{V}_{2}  \right]\overline{V}^\T \nonumber\\
&&+\;\overline{U}
\left[\begin{array}{cc} {\rm Diag}\left(Q_1\Omega_1^kQ_1^\T,\dots,
Q_r\Omega_r^kQ_r^\T\right)  & 0 \\[2mm]
0 & Q'\Omega_{r+1}^kQ''^\T  \end{array}\right]
\overline{V}^\T   ,
\label{eq:G'm-form-conver}
\end{eqnarray}
where for each $k$, $\Omega_l^k=({\cal E}_{l}(\sigma^k))_{a_{l}a_{l}}\circ S(\widehat{H}_{a_{l}a_{l}})+{\rm Diag}(({\cal C}(\sigma^k){\rm diag}(S(\widehat{H})))_{a_{l}})$, $l=1,\ldots,r$ and
\[
\Omega_{r+1}^k=\left[({\cal E}_{1}(\sigma^k))_{bb}\circ S(\widehat{H}_{bb})+{\rm Diag}(({\cal C}(\sigma^k){\rm diag}(S(\widehat{H})))_{b})+({\cal E}_{2}(\sigma^k))_{bb}\circ T(\widehat{H}_{bb})\quad ({\cal F}_{2}(\sigma^k))_{bc}\circ \widehat{H}_{bc}\right] ,
\]
${\cal E}_{1}(\sigma^k)$, ${\cal E}_{2}(\sigma^k)$ and ${\cal F}(\sigma^k)$ are defined   by \cite[(34)--(36)]{DSSToh18}, respectively and $\widehat{H}:=M^\T \overline{U}^\T H\overline{V}N=M^\T \widetilde{H}N$. Therefore, by comparing \eqref{eq:Psi'CkH} and \eqref{eq:G'm-form-conver}, we know that the inclusion ${\cal V}\in \partial_B G(\overline{X})$ follows if we  {can} show that
\begin{equation}\label{eq:lim-R-Omega}
\lim_{k\to \infty} \left(R_1^k,\dots, R_r^k, R_{r+1}^k \right)
=\lim_{k\to \infty}
\left(Q_1\Omega_1^kQ_1^\T,\dots,Q_r\Omega_r^kQ_r^\T,
Q'\Omega_{r+1}^kQ''^\T \right) .
\end{equation}
 {Similar} to the proofs  {for} {\bf  Cases 1-8} in the first part, by using   (\ref{eq:lim-g'-phi'}) and \eqref{eq:limit-deri-theta-1}--\eqref{eq:limit-deri-theta-3} in Lemma \ref{lem:limit-deri-theta}, we can show that \eqref{eq:lim-R-Omega} holds. For simplicity, we omit the details here. Therefore, we obtain that $\partial_{B}G(\overline{X})=\partial_{B}\Psi(0)$. This completes the proof.
\end{proof}

\section{Extensions}\label{section:extension}

 {In this section, we consider the extensions of the related results obtained in previous sections for the case that ${\cal X}\equiv \mathbb{V}^{m\times n}$ to the  general spectral operators defined on the vector space ${\cal X}$ given by \eqref{eq:space-X}, i.e., the Cartesian product of several  real or complex matrices. One special class of this nature are the smoothing spectral operators.}

\subsection{The spectral operators defined on the general matrix spaces}

In fact, the corresponding properties of the general spectral operators defined on the vector space ${\cal X}$ given by \eqref{eq:space-X}, including locally Lipschitzian continuity, $\rho$-order B-differentiability, $\rho$-order G-semismoothness and
the characterization of the Clarke  generalized Jacobian, can be studied in the same fashion as those in Sections
\ref{section:B-diff}--\ref{section:Clake g-J}. For simplicity, we omit the
proofs here. For readers who are interested in seeking the  {details}, we refer them to \cite{Ding12}.

Let ${\cal X}$ and ${\cal Y}$ be the vector spaces defined by \eqref{eq:space-X} and \eqref{eq:space-Y}, respectively. Suppose that ${\cal N}$  {is}  a given nonempty open set in ${\cal X}$. Let $G:{\cal X}\to{\cal X}$ be the spectral operator defined in Definition \ref{def:def-spectral-op} with respect to $g:{\cal Y}\to{\cal Y}$, which is mixed symmetric on an open set $\hat{\kappa}_{\cal N}$ in ${\cal Y}$ containing $\kappa_{\cal N}:=\{\kappa(X)\mid X\in {\cal N}\}$. For the given $\overline{X}=(\overline{X}_1,\ldots,\overline{X}_{s_0},\overline{X}_{s_0+1},\ldots,\overline{X}_{s})\in{\cal X}$, recall  {that} $\kappa(\overline{X})=\left(\lambda(\overline{X}_1),\ldots,\lambda(\overline{X}_{s_0}),\sigma(\overline{X}_{s_0+1}),\ldots,\sigma(\overline{X}_s)\right)\in{\cal Y}$. We first consider the locally Lipschitzian continuity of spectral operators of matrices.
\begin{theorem}\label{thm:Lip-spectral-op-block}
	Let $\overline{X}\in{\cal N}$ be given. The spectral operator $G$ is locally Lipschitz continuous near $\overline{X}$ if and only if the corresponding mixed symmetric function $g$ is locally Lipschitz continuous near $\kappa(\overline{X})$.
\end{theorem}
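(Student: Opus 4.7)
My plan is to adapt the two implications of Theorem \ref{thm:Lip-spectral-op} blockwise, noting that the main ingredients (Steklov averaging and the fact that locally each factor corresponds to a single matrix) go through with only bookkeeping changes.

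For the implication ``$g$ locally Lipschitz $\Rightarrow$ $G$ locally Lipschitz'', I would first extend Proposition \ref{prop:gigj} to the Cartesian product setting. Its proof relied on the absolute symmetry of $g$ to compare $g_i(\sigma)\pm g_j(\sigma)$ to $|\sigma_i\pm\sigma_j|$ or $|\sigma_i|$ by producing auxiliary vectors via a permutation (or sign flip) acting only on the indices $i,j$. In the product setting, the analogous swaps must be performed within a single block: permutations alone inside a symmetric block $k\le s_0$, and signed permutations inside a non-symmetric block $k>s_0$. Since the mixed symmetry hypothesis in Definition \ref{def:mixed_symmetric} allows precisely these element operations (tensored with the identity on all other blocks), the three-inequality statement of Proposition \ref{prop:gigj} reproduces blockwise: for each $k\in\{1,\ldots,s\}$ one gets $|g_{k,i}(\kappa)-g_{k,j}(\kappa)|\le L'|\kappa_{k,i}-\kappa_{k,j}|$ whenever $\kappa_{k,i}\neq \kappa_{k,j}$, and for $k>s_0$ also the $+$-version and the one-index version. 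Next I would define the Steklov averaged function on ${\cal Y}$ by averaging over a product of small cubes centered at each $y_k$; because such boxes are invariant under the action of ${\cal P}$, the resulting $g(\omega,\cdot)$ remains mixed symmetric on a neighborhood of $\kappa(\overline X)$, is $C^1$, and has uniformly bounded Jacobian of modulus at most $L$.

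With these in hand, Proposition \ref{prop:G'-uni-bounded} extends: the derivative formula of the corresponding spectral operator $G(\omega,\cdot)$ on ${\cal X}$, assembled from the per-block formulas of \cite{DSSToh18} in the style of \eqref{eq:Gs'-formula} (now using the block matrices $\mathcal E_1^0,\mathcal E_2^0,\mathcal F^0$ formed from the components of $g(\omega,\cdot)$ and the joint vector $\kappa$), is uniformly bounded on a compact neighborhood of $\overline X$ because each divided-difference/quotient coefficient is controlled by the blockwise Proposition \ref{prop:gigj}. Uniform convergence $G(\omega,\cdot)\to G$ as $\omega\downarrow 0$ on that neighborhood follows exactly as in the single-block case. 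Then the mean value theorem argument from the proof of Theorem \ref{thm:Lip-spectral-op} applies verbatim to produce a Lipschitz constant $\overline L$ for $G$ near $\overline X$.

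The reverse implication is simpler and proceeds by restriction. Fix, once and for all, $\overline P_k\in\mathbb O^{m_k}(\overline X_k)$ for $1\le k\le s_0$ and $(\overline U_k,\overline V_k)\in\mathbb O^{m_k,n_k}(\overline X_k)$ for $s_0+1\le k\le s$. For any $y=(y_1,\dots,y_s)$ in a small neighborhood of $\kappa(\overline X)$ in $\hat\kappa_{\cal N}$, set $Y_k:=\overline P_k\,\mathrm{Diag}(y_k)\overline P_k^\T$ or $Y_k:=\overline U_k[\mathrm{Diag}(y_k)\ 0]\overline V_k^\T$ according to the block type. The analogue of \cite[Proposition 3]{DSSToh18} for the Cartesian product case (which simply pastes the single-block identity block-by-block using that $\overline P_k$ and $(\overline U_k,\overline V_k)$ are admissible decompositions of $Y_k$) yields $G_k(Y)=\overline P_k\,\mathrm{Diag}(g_k(y))\overline P_k^\T$ or the corresponding SVD form. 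Consequently $\|Y-Y'\|=\|y-y'\|$ and $\|G(Y)-G(Y')\|=\|g(y)-g(y')\|$, so a local Lipschitz estimate for $G$ near $\overline X$ immediately restricts to one for $g$ near $\kappa(\overline X)$.

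The main obstacle is really only the first paragraph: carefully verifying that mixed symmetry interacts with the Steklov averaging and the blockwise divided-difference bounds in a way that preserves the per-block structure. Once those are in place, the mean value argument and the restriction argument are purely mechanical extensions of the $\mathbb V^{m\times n}$ case.
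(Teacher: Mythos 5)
Your proposal is correct and follows essentially the route the paper intends: the paper omits the proof of Theorem \ref{thm:Lip-spectral-op-block}, stating that it is obtained ``in the same fashion'' as the single-matrix case (referring to \cite{Ding12}), and your blockwise adaptation of Proposition \ref{prop:gigj} (difference bounds only for the symmetric blocks, all three bounds for the rectangular blocks), the mixed-symmetry-preserving Steklov averaging on ${\cal Y}$, the uniform derivative bound and mean value argument, and the restriction argument for the converse is exactly that adaptation.
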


For the $\rho$-order B(ouligand)-differentiability ($0<\rho\leq 1$) of the general spectral operators, we have the following theorem.
\begin{theorem}\label{thm:rho-order-B-diff-spectral-op-block}
	Let $\overline{X}\in{\cal N}$ and $0<\rho\leq 1$ be given. Then, we have the following results.
	\begin{itemize}
		\item[(i)] If $g$ is locally Lipschitz continuous near $\kappa(\overline{X})$ and $\rho$-order B-differentiable at $\kappa(\overline{X})$, then $G$ is $\rho$-order B-differentiable at $\overline{X}$.
		\item[(ii)] If $G$ is $\rho$-order B-differentiable at $\overline{X}$, then $g$ is $\rho$-order B-differentiable at $\kappa(\overline{X})$.
	\end{itemize}
\end{theorem}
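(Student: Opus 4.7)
\medskip

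\noindent\textbf{Proof proposal.} The plan is to reduce Theorem~\ref{thm:rho-order-B-diff-spectral-op-block} to the single-space case (Theorem~\ref{thm:rho-order-B-diff-spectral-op}) by treating the Cartesian product block-by-block, while carefully tracking the coupling induced by the fact that each $g_k$ depends on all of $\kappa(X)$, not only on $\kappa_k(X)$. As in the proof of Theorem~\ref{thm:rho-order-B-diff-spectral-op}, I will only write up the case $\rho=1$; the general $0<\rho\le 1$ case is identical up to the exponent.

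\medskip

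\noindent\emph{Part (i).} First I would extend the decomposition $G=G_S+G_R$ of \eqref{eq:def-GS} to the product setting. For each block index $k$, on a neighborhood $\mathcal{B}_k$ of $\overline{X}_k$ one has a partition of $\{1,\ldots,m_k\}$ into index sets $a^k_1,\ldots,a^k_{r_k},b^k$ corresponding to the distinct eigen/singular values of $\overline{X}_k$, and one can define $\mathcal{U}^k_l(X_k)$ together with the constants $\bar{g}_{k,l}:=g_k(\kappa(\overline{X}))_i$ (which, by the mixed symmetry of $g$ on $\hat{\kappa}_\mathcal{N}$ and \cite[Proposition~1]{DSSToh18}, are independent of $i\in a^k_l$). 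Setting
\[
(G_S)_k(X)\ :=\ \sum_{l=1}^{r_k}\bar{g}_{k,l}\,\mathcal{U}^k_l(X_k),\qquad (G_R)_k(X):=G_k(X)-(G_S)_k(X),
\]
the block-wise analogue of \cite[Lemma~1]{DSSToh18} (combined with the corresponding symmetric-matrix result used in \cite{DSSToh18}) shows that $G_S$ is twice continuously differentiable near $\overline{X}$ and that $G_S(\overline{X}+H)-G_S(\overline{X})=G_S'(\overline{X})H+O(\|H\|^2)$ on a neighborhood of $\overline{X}$ in $\mathcal{N}$. Crucially, $G_S'(\overline{X})$ coincides with the ``smooth part'' of the directional derivative formula for general product spectral operators.

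\medskip

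\noindent Next I would control $G_R$. The Lipschitz continuity of $g$ near $\kappa(\overline{X})$, together with Theorem~\ref{thm:Lip-spectral-op-block}, makes $G$ locally Lipschitz near $\overline{X}$, hence directionally differentiable (by \cite[Theorem~3 and Remark~1]{DSSToh18}) and with $G'(\overline{X};\cdot)$ globally Lipschitz \cite[Theorem~A.2]{Robinson87}. Using strong semismoothness of eigenvalues and singular values \cite{SSun03} in each block, one gets $\kappa(X)-\kappa(\overline{X})=\kappa'(\overline{X};H)+O(\|H\|^2)$, and then the $1$-order B-differentiability and Lipschitz continuity of $g$ give
\[
g(\kappa(X))-g(\kappa(\overline{X}))\ =\ g'(\kappa(\overline{X});\,h)+O(\|H\|^2),\qquad h:=\kappa'(\overline{X};H).
\]
For each block I would then apply, blockwise, the same eigenvector/singular-vector perturbation bounds from \cite[Proposition~7]{DSToh10} that were already used in the single-space proof (steps (a)--(b) of Theorem~\ref{thm:rho-order-B-diff-spectral-op}(i)); this converts the vector identity above into the matrix identity
\[
G_R(X)\ =\ \widehat{\Phi}(D(H))+O(\|H\|^2),
\]
where $\widehat{\Phi}$ is the directional-derivative spectral operator attached to the mixed symmetric mapping $\phi=g'(\kappa(\overline{X});\cdot)$ on the analogue of $\mathcal{W}$ for the product structure (built from $\mathcal{S}^{|a^k_l|}$ blocks for symmetric components and $\mathcal{V}^{|b^k|\times(n_k-|a^k|)}$ blocks for rectangular components), and $D(H)$ is the concatenation of the appropriate $S(\cdot)$ and rectangular sub-blocks of $\overline{P}_k^{\mathsf T} H_k \overline{P}_k$ or $\overline{U}_k^{\mathsf T} H_k \overline{V}_k$. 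Combining with the $G_S$ estimate gives $G(X)-G(\overline{X})-G'(\overline{X};H)=O(\|H\|^2)$, which is exactly $1$-order B-differentiability.

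\medskip

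\noindent\emph{Part (ii).} This direction is the soft one. Fix any $(\overline{P}_k)\in\prod_{k\le s_0}\mathbb{O}^{m_k}(\overline{X}_k)$ and any $(\overline{U}_k,\overline{V}_k)\in\prod_{k>s_0}\mathbb{O}^{m_k,n_k}(\overline{X}_k)$. For $y=(y_1,\ldots,y_s)\in\mathcal{Y}$ small, define $Y_k:=\overline{P}_k\,\mathrm{Diag}(\kappa_k(\overline{X})+y_k)\overline{P}_k^{\mathsf T}$ for $k\le s_0$ and $Y_k:=\overline{U}_k[\mathrm{Diag}(\kappa_k(\overline{X})+y_k)\ \ 0]\overline{V}_k^{\mathsf T}$ for $k>s_0$. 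By the diagonal-invariance property \cite[Proposition~3]{DSSToh18} (applied block-wise), $G(Y)$ is diagonal in the same bases, so extracting diagonals yields
\[
g(\kappa(\overline{X})+y)-g(\kappa(\overline{X}))\ =\ (\text{diagonal of }G'(\overline{X};H))+O(\|H\|^{1+\rho}),
\]
where $H$ is $Y$ viewed as a perturbation. Since $\|H\|=\|y\|$, this establishes the $\rho$-order B-differentiability of $g$ at $\kappa(\overline{X})$ with directional derivative equal to the diagonal of $G'(\overline{X};\cdot)$ under the construction above.

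\medskip

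\noindent\emph{Main obstacle.} The only genuinely new difficulty compared with Theorem~\ref{thm:rho-order-B-diff-spectral-op} is bookkeeping: one must align the block-wise partitions $\{a^k_l,b^k\}$, the block-wise orthogonal/unitary matrices $(\overline{P}_k)$ and $(\overline{U}_k,\overline{V}_k)$, and the corresponding components of the mixed symmetric directional derivative $\phi$, so that the product-space analogue of the single-matrix identity $G_R(X)=\widehat{\Phi}(D(H))+O(\|H\|^2)$ still holds. The essential point is that the $O(\|H\|^2)$ remainders produced by \cite[Proposition~7]{DSToh10} in each block are independent across blocks and therefore add up, so the coupling through $g$ (which sees all of $\kappa(X)$ at once) enters only through the Lipschitz/B-differentiability hypothesis on $g$ itself, and no new analytic ingredient is required.
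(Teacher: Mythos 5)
Your proposal is correct and is essentially the proof the paper intends: the paper itself omits the argument for Theorem~\ref{thm:rho-order-B-diff-spectral-op-block}, stating only that it proceeds ``in the same fashion'' as the single-space case (Theorem~\ref{thm:rho-order-B-diff-spectral-op}) and referring to \cite{Ding12}, and your block-wise extension of the $G=G_S+G_R$ decomposition, the perturbation bounds from \cite[Proposition 7]{DSToh10}, and the diagonal-restriction argument via \cite[Proposition 3]{DSSToh18} for part (ii) is exactly that adaptation. The only cosmetic remark is that for the symmetric blocks ($1\le k\le s_0$) there is no distinguished zero-value index set $b^k$; the partition is simply by distinct eigenvalues, which your bookkeeping handles without change.
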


Suppose that $g$ is locally Lipschitz continuous near $\kappa(\overline{X})$.  {Then} we know from Theorem \ref{thm:Lip-spectral-op-block} that the corresponding spectral operator $G$ is also locally Lipschitz continuous near $\overline{X}$. We have the following theorem on the G-semismoothness of spectral operators.

\begin{theorem}\label{thm:rho-order-G-semismooth-spectral-op-block}
	Let $\overline{X}\in{\cal N}$ be given. Suppose that $0<\rho\leq 1$. Then, the spectral operator $G$ is $\rho$-order G-semismooth at $\overline{X}$ if and only if  $g$ is $\rho$-order G-semismooth at $\kappa(\overline{X})$.
\end{theorem}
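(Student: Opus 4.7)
The plan is to reduce Theorem \ref{thm:rho-order-G-semismooth-spectral-op-block} to a block-indexed version of the argument used for Theorem \ref{thm:rho-order-G-semismooth-spectral-op}, invoking the characterization of Lemma \ref{lem:semismoothness-equiv} to work with sequences $\{X^k\}\subset\mathcal{D}_G$ converging to $\overline{X}$. The cornerstone is the decomposition $G = G_S + G_R$ generalized to the Cartesian product setting: with $\overline{\nu}_{k,1}>\cdots>\overline{\nu}_{k,r_k}$ the distinct nonzero eigenvalues/singular values of $\overline{X}_k$ and index sets $a_{k,l}$ defined analogously to \eqref{eq:def-a-b-c-ak-nonsymmetric}, one sets $G_S(X)_k = \sum_l \bar g_{k,l}\,\mathcal{U}_{k,l}(X_k)$, where $\bar g_{k,l}$ is the common value of $g_k(\kappa(\overline{X}))$ over $a_{k,l}$. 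An extension of \cite[Lemma 1]{DSSToh18} to the product space $\mathcal{X}$ shows $G_S$ is $C^2$ near $\overline{X}$, contributing an $O(\|X-\overline{X}\|^2)$ error automatically and reducing the task to the residual $G_R$.

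For the $(\Longleftarrow)$ direction, block-wise perturbation of each $\overline{X}_k$ via \cite[Proposition 7]{DSToh10} produces, for large $k$, orthogonal matrices $Q_{k,l}\in\mathbb{O}^{|a_{k,l}|}$ (and their non-symmetric analogues on the zero/nonzero singular value subblocks) that conjugate the local structure of $X^k_k$ onto that of $\overline{X}_k$ modulo $O(\|H\|^2)$, where $H=X^k-\overline{X}$. Combining this with the strong semismoothness of the eigenvalue and singular value maps \cite{SSun03} yields $\kappa(X^k)-\kappa(\overline{X}) = h^k + O(\|H\|^2)$ for an explicit block-diagonal direction $h^k$. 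The hypothesis that $g$ is $\rho$-order G-semismooth at $\kappa(\overline{X})$ then gives $g(\kappa(X^k))-g(\kappa(\overline{X})) = g'(\kappa(X^k))h^k + O(\|H\|^{1+\rho})$, and matching this against the block derivative formula for $G'(X^k)$ from the product-space version of \cite[Theorem 4]{DSSToh18} shows that the residuals cancel up to $O(\|H\|^{1+\rho})$. Each of the case analyses (analogous to Cases 1--8 in the proof of Theorem \ref{thm:rho-order-G-semismooth-spectral-op}) must be repeated for the symmetric blocks $k\le s_0$ (two-index cases analogous to $a_l, a_l$ and $b,b$) and the non-symmetric blocks $k>s_0$ (eight-index cases mirroring the SVD setting).

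For the $(\Longrightarrow)$ direction, fix $\overline{P}_k\in\mathbb{O}^{m_k}(\overline{X}_k)$ for $k\le s_0$ and $(\overline{U}_k,\overline{V}_k)\in\mathbb{O}^{m_k,n_k}(\overline{X}_k)$ for $k>s_0$. Given $y\in\mathcal{D}_g$ near $\kappa(\overline{X})$, construct $X\in\mathcal{N}$ by sandwiching $\mathrm{Diag}(y_k)$ between the fixed bases block-by-block. Then \cite[Proposition 3]{DSSToh18} and the product-space differentiability criterion from \cite[Theorem 4]{DSSToh18} give $X\in\mathcal{D}_G$, and $G(X)$ is the same sandwich applied to $g(y)$; the $\rho$-order G-semismoothness of $G$ at $\overline{X}$ then immediately yields the corresponding estimate for $g$ at $\kappa(\overline{X})$.

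The main obstacle will be the bookkeeping required by the cross-block coupling inherent in the mixed symmetric mapping $g$: although each block's eigenvector/singular vector perturbation is block-local, the derivative $g'(\kappa(X))$ entering $G'(X)$ mixes variables across all blocks, so one must verify that the product-space derivative formula from \cite[Theorem 7]{DSSToh18} absorbs these cross-block contributions cleanly and that the resulting error, after block-wise application of the perturbation lemmas, remains of order $\|H\|^{1+\rho}$. Once this is set up carefully, the case analysis itself is a mechanical extension of the single-matrix argument and may be omitted in the final write-up.
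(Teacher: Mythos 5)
Your proposal matches the paper's approach: the paper gives no separate proof of this theorem, stating only that it can be handled ``in the same fashion'' as Theorem \ref{thm:rho-order-G-semismooth-spectral-op} with details deferred to \cite{Ding12}, and that is precisely the block-wise extension you outline (decomposition into a $C^2$ part $G_S$ plus a residual $G_R$, Lemma \ref{lem:semismoothness-equiv}, the perturbation results of \cite{DSToh10}, strong semismoothness of $\lambda(\cdot)$ and $\sigma(\cdot)$, and the diagonal sandwich construction for the converse). One cosmetic correction: for the symmetric blocks $k\le s_0$ zero eigenvalues are not special (no sign symmetry forces $g_k$ to vanish there), so $G_S$ should be built from \emph{all} distinct eigenvalues of $\overline{X}_k$ rather than only the nonzero ones, and the $b,b$-type cases involving the $T(\cdot)$ terms arise only for the blocks $k> s_0$.
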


Finally, we assume that $g$ is locally Lipschitz continuous near $\overline{\kappa}=\kappa(\overline{X})$ and directionally differentiable at $\overline{\kappa}$. From Theorem \ref{thm:Lip-spectral-op-block}, \cite[Theorems 6 and Remark 1]{DSSToh18}, the spectral operator $G$ is also locally Lipschitz continuous near $\overline{X}$ and directionally differentiable at $\overline{X}$. Then, we have the following results on the characterization of  the Clarke generalized Jacobian of $G$.

\begin{theorem}\label{thm:B-subdiff-spectral-op-block}
	Let $\overline{X}\in{\cal N}$ be given. Suppose that there exists an open neighborhood ${\cal B}\subseteq{\cal Y}$ of $\overline{\kappa}$ in $\hat{\kappa}_{\cal N}$ such that $g$ is differentiable at $\kappa\in{\cal B}$ if and only if $\phi=g'(\overline{\kappa};\cdot)$ is differentiable at $\kappa-\overline{\kappa}$. Assume that the function $d:{\cal Y}\to{\cal Y}$ defined by
	\[
	d(h)=g(\overline{\kappa}+h)-g(\overline{\kappa})-g'(\overline{\kappa};h),\quad h\in{\cal Y}
	\] is strictly differentiable at zero. Then, we have
	\[
	\partial_{B}G(\overline{X})=\partial_{B}\Psi(0)\quad {\rm and} \quad \partial G(\overline{X})=\partial\Psi(0) ,
	\]where $\Psi:=G'(\overline{X};\cdot):{\cal X}\to{\cal X}$ is the directional derivative of $G$ at $\overline{X}$.
\end{theorem}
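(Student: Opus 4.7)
\medskip
\noindent
\textbf{Proof Plan for Theorem \ref{thm:B-subdiff-spectral-op-block}.}
The strategy is to mimic the proof of Theorem \ref{thm:B-subdiff-spectral-op}, treating each block $X_k$ of $\overline{X}\in{\cal X}$ separately but combining them through the product structure of the Cartesian space. As in the single-matrix case, it suffices to prove the equality of B-subdifferentials, since the Clarke generalized Jacobian equality then follows by taking the convex hull on both sides. Exploiting the block-wise directional derivative formula for $G$ from \cite[Theorem 6]{DSSToh18}, we first decompose $G=G_S+G_R$ as in \eqref{eq:def-GS}, where $G_S$ is twice continuously differentiable near $\overline{X}$ and $G_R$ captures the nonsmooth behavior within each eigenvalue/singular-value cluster of each block.

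For the inclusion $\partial_B G(\overline{X})\subseteq \partial_B \Psi(0)$, I would take ${\cal V}\in \partial_B G(\overline{X})$ with ${\cal V}=\lim_{k\to\infty} G'(X^k)$ along some sequence $X^k\to\overline{X}$ of points of F-differentiability. For each block $k$, I would fix a simultaneous eigendecomposition (if $1\le k\le s_0$) or SVD (if $s_0+1\le k\le s$) of $X^k_k$, and pass to a subsequence so that the associated orthogonal matrices converge to limits $(P_k^\infty)$ and $(U_k^\infty,V_k^\infty)$ in ${\mathbb O}^{m_k}(\overline{X}_k)$ or ${\mathbb O}^{m_k,n_k}(\overline{X}_k)$. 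By \cite[Proposition 5]{DSToh10} applied block-wise, these limiting matrices differ from the reference $\overline{P}_k$ or $(\overline{U}_k,\overline{V}_k)$ by block-diagonal orthogonal transformations $M_k,N_k$ that commute with the cluster structure. Defining $w^k:=\kappa(X^k)-\overline{\kappa}$ and assembling the block ${\cal W}$-valued sequence $W^k\in{\cal W}$ together with the associated $C^k\in{\cal X}$, I would show that $\Psi$ is differentiable at $C^k$ (by the block-wise analogue of \cite[Theorem 7]{DSSToh18}), that $C^k\to 0$, and that $\Psi'(C^k)\to {\cal V}$ by matching entries.

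The entry-matching step is the main obstacle and the heart of the proof. Using $\phi'(w^k)=g'(\kappa(X^k))-d'(w^k)$ and the strict differentiability of $d$ at $0$, we have $\lim_k d'(w^k)=0$, hence $\lim_k \phi'(w^k)=\lim_k g'(\kappa(X^k))$. One then needs to compare, entry by entry within each block, the divided-difference coefficients appearing in $G_R'(X^k)$ (involving $g_i(\kappa(X^k))$, $g_j(\kappa(X^k))$, and the eigenvalues/singular values of $X^k_k$) with the corresponding coefficients in $\widehat\Phi'(W^k)$ (involving $\phi_i(w^k)$, $\phi_j(w^k)$, and entries of $w^k$). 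By the block-wise analogue of Lemma \ref{lem:limit-deri-theta}, the discrepancy $d_i(w^k)\pm d_j(w^k)$ divided by $w^k_i\pm w^k_j$ tends to $0$ whenever $i,j$ lie in a common eigenvalue cluster $a_\ell$ (for a symmetric block), in the zero-singular-value set $b$ (for a non-symmetric block), or in the analogous index sets. Thus the limits coincide in each of the case categories analyzed in Theorem \ref{thm:B-subdiff-spectral-op}, namely (i) diagonal entries within a cluster, (ii) off-diagonal entries where the two underlying scalars have split apart in the limit, (iii) off-diagonal entries where they have remained equal, and, for non-symmetric blocks, (iv) the additional $T$-type and $\widetilde H_{b\bar a}$-type entries involving sums $\sigma_i+\sigma_j$ or single singular values $\sigma_i$ that vanish in the limit. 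Writing this out case by case for each type of block is the bulk of the work, but each case is a direct transcription of the corresponding case in the proof of Theorem \ref{thm:B-subdiff-spectral-op}.

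For the reverse inclusion $\partial_B\Psi(0)\subseteq \partial_B G(\overline{X})$, the construction is dual. Given ${\cal V}=\lim_k \Psi'(C^k)$ with $C^k\to 0$, I would diagonalize the block components of $D(C^k)$ to obtain $w^k\in{\cal Y}$ and orthogonal matrices $M_k^j,N_k^j$ (with $M_k^j=N_k^j$ in symmetric blocks), then set $X^k_k:=\overline{P}_k M^k_k\,\mathrm{Diag}(\overline\kappa_k+w^k_k)(M^k_k)^\T \overline{P}_k^\T$ or its SVD-type analogue. The hypothesis that $g$ is differentiable at $\kappa$ iff $\phi$ is differentiable at $\kappa-\overline\kappa$ on some neighborhood, combined with $\overline\kappa+w^k$ staying in the feasible region (nonnegativity of singular-value components), ensures that $G$ is differentiable at each $X^k$ and that $X^k\to \overline{X}$. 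Invoking strict differentiability of $d$ at $0$ exactly as in the forward direction, and applying the same entry-wise case analysis with the roles of $C^k$ and $X^k$ interchanged, yields $G'(X^k)\to {\cal V}$, completing the proof.
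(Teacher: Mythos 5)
Your proposal is correct and follows exactly the route the paper intends: the paper itself omits the proof of Theorem \ref{thm:B-subdiff-spectral-op-block}, stating only that it can be obtained ``in the same fashion'' as Theorem \ref{thm:B-subdiff-spectral-op} (with details deferred to \cite{Ding12}), and your block-wise transcription of that argument---the $G=G_S+G_R$ decomposition, convergent orthogonal factors via \cite[Proposition 5]{DSToh10}, the identity $\phi'(w^k)=g'(\kappa(X^k))-d'(w^k)$ with strict differentiability of $d$ forcing $d'(w^k)\to 0$, and the entry-wise case analysis through the block analogue of Lemma \ref{lem:limit-deri-theta}, plus the dual construction for the reverse inclusion---is precisely that argument. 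No gaps beyond the routine case-by-case verification you already flag.
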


\subsection{The smoothing spectral operators}

In this subsection, we consider the smoothing spectral operators of matrices. For simplicity, we mainly focus on the case ${\cal X}\equiv \R\times \mathbb{V}^{m\times n}$. The corresponding results can be obtained as special cases for the spectral operators defined on the general matrix space $\cal X$ given by \eqref{eq:space-X}.

Let ${\cal N}$ be a given nonempty open set in $\V^{m\times n}$. Suppose that $g:\R^m\to\R^m$ is  mixed symmetric with respect to ${\cal P}\equiv\pm{\mathbb P}^m$ on an open set $\hat{\sigma}_{ {\cal N}}$ in $\R^{m}$ containing $\sigma _{\cal N}=\left\{\sigma(X)\mid X\in{\cal N}\right\}$. Let $\overline{X}\in {\cal N}$ be given. Assume that $g$ is Lipschitz continuous near $\overline{\sigma}=\sigma(\overline{X})$. Suppose there exists a mapping $\theta:\mathbb{R}_{++}\times\hat{\sigma}_{ {\cal N}}\to\mathbb{R}^m$
 {such that} for any $x\in \hat{\sigma}_{ {\cal N}}$ and $(\omega,z)\in\mathbb{R}_{++}\times\hat{\sigma}_{ {\cal N}}$ close to $(0,x)$, $\theta$ is continuously differentiable  {around}  $(\omega,z)$ unless $\omega=0$ and $\theta(\omega,z)\to g(x)$ as $(\omega,z)\to(0,x)$. For convenience, for any $x\in \hat{\sigma}_{ {\cal N}}$, we always define $\theta(0,x)=g(x)$  and $\theta(\omega,x)=\theta(-\omega,x)$ for any $\omega<0$. Furthermore, we assume that for any fixed $\omega$ close to $0$, $\theta(\omega,\cdot)$ is also  mixed symmetric on $\hat{\sigma}_{ {\cal N}}$. Then, the mapping $\theta$ is said to be a smoothing approximation of $g$ on $\hat{\sigma}_{ {\cal N}}$.  For a given mixed symmetric mapping $g$, there are many ways to construct such a smoothing approximation. For example, as mentioned in Section \ref{section:Lip}, the Steklov averaged function defined  by \eqref{eq:def-Steklov averaged function} is a smoothing approximation of the  mixed symmetric mapping $g$.

 {Define $\pi: \mathbb{R}\times\hat{\sigma}_{ {\cal N}}\to \mathbb{R}\times\mathbb{R}^m$ by $\pi(\omega,x)=(\omega, \theta(\omega,x))$, $(\omega,x)\in \mathbb{R}\times\hat{\sigma}_{ {\cal N}}$. Then, it is easy to verify that $\pi$ is mixed symmetric (Definition \ref{def:mixed_symmetric}) over $\mathbb{R}\times\mathbb{R}^m$ with respect to $\pm\mathbb{P}^{1}\times\pm \mathbb{P}^{m}$. Note that $\mathbb{R}\equiv\mathbb{V}^{1\times 1}$. The  spectral operator $\Pi:\mathbb{V}^{1\times 1}\times\mathbb{V}^{m\times n}\to \mathbb{V}^{1\times 1}\times\mathbb{V}^{m\times n}$ defined  with respect to $\pi$ takes the form:
	$$\Pi(\omega,X)=(\omega,\Theta(\omega,X)),\quad (\omega,X)\in \mathbb{V}^{1\times 1}\times{\cal N},$$
	where $\Theta(\omega,X):=U\left[{\rm Diag}\big(\theta(\omega,\sigma(X))\big)\quad 0\right]V^\T$ and $(U,V)\in \mathbb{O}^{m,n}(X)$.} We call $\Theta:\mathbb{V}^{1\times 1}\times{\cal N}\to\mathbb{V}^{m\times n}$ the smoothing spectral operator of $G$ with respect to $\theta$. It follows from \cite[Theorem 1]{DSSToh18} that $\Theta$ is well-defined. Moreover, since $\theta$ is continuously differentiable at any $(\omega,z)\in\mathbb{R}\times\hat{\sigma}_{ {\cal N}}$ with $\omega$ close to $0$, we know from \cite[Theorem 7]{DSSToh18} that $\Theta$ is also continuously differentiable at any $(\omega,X)\in\mathbb{R}\times{\cal N}$, and the corresponding derivative formula can be found in \cite[Theorem 7]{DSSToh18}. For the case $\omega=0$, the continuity and Hadamard  {directional} differentiability of $\Theta$ follows directly from \cite[Theorem 6]{DSSToh18}.  {Next, we study the locally Lipschitz continuity, $\rho$-order B-differentiable ($0<\rho\leq 1$), $\rho$-order G-semismooth ($0<\rho\le 1$), and  the characterization of  the Clarke generalized Jacobian of $\Theta$ at $(0,\overline{X})$.  
The first property we consider is  the local  Lipschitzian continuity of $\Theta$ near $(0,\overline{X})$.}

 {\begin{theorem}\label{thm:Lip-smoothing-spectral}
	Let $\overline{X}\in {\cal N}$ be given. Suppose that the smoothing approximation $\theta$ of $g$ is locally Lipschitz continuous near $(0,\overline{\sigma})$. Then, the smoothing spectral operator $\Theta$ with respect to $\theta$ is locally Lipschitz continuous near $(0,\overline{X})$.
\end{theorem}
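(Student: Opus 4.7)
The plan is to recognize the smoothing spectral operator $\Theta$ as a component of a spectral operator defined on a Cartesian product space and then invoke the Lipschitz continuity result already established in the Cartesian product setting, namely Theorem~\ref{thm:Lip-spectral-op-block}. This follows the observation made just before the theorem statement: with ${\cal X}_0 := \mathbb{V}^{1\times 1}\times\mathbb{V}^{m\times n}$ (a special case of \eqref{eq:space-X} with $s=2$, $s_0=0$, $m_1=n_1=1$, $m_2=m$, $n_2=n$), the mapping $\pi(\omega,x)=(\omega,\theta(\omega,x))$ is mixed symmetric over $\mathbb{R}\times\mathbb{R}^m$ with respect to $\pm\mathbb{P}^{1}\times\pm\mathbb{P}^{m}$, and $\Pi(\omega,X)=(\omega,\Theta(\omega,X))$ is the spectral operator on ${\cal X}_0$ associated with $\pi$.

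First, I would verify the Lipschitz hypothesis on $\pi$. Since $\theta$ is assumed to be locally Lipschitz continuous near $(0,\overline{\sigma})=\kappa(0,\overline{X})$ and the first coordinate map $\omega\mapsto\omega$ is globally $1$-Lipschitz, there exist a neighborhood $\mathcal{V}$ of $(0,\overline{\sigma})$ in $\mathbb{R}\times\mathbb{R}^m$ and a constant $L>0$ such that
\begin{equation*}
\|\pi(\omega_1,x_1)-\pi(\omega_2,x_2)\|\le L\,\|(\omega_1,x_1)-(\omega_2,x_2)\|\quad\forall\,(\omega_1,x_1),(\omega_2,x_2)\in\mathcal{V}.
\end{equation*}
Thus $\pi$ is locally Lipschitz continuous near $\kappa(0,\overline{X})$ as a mixed symmetric mapping on ${\cal X}_0$.

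Next, I would apply Theorem~\ref{thm:Lip-spectral-op-block} to the spectral operator $\Pi$ associated with $\pi$: the local Lipschitz continuity of the mixed symmetric generator near $\kappa(0,\overline{X})$ is equivalent to the local Lipschitz continuity of $\Pi$ near $(0,\overline{X})$. Hence $\Pi$ is locally Lipschitz near $(0,\overline{X})$, and since $\Theta$ is the second component of $\Pi$, the estimate
\begin{equation*}
\|\Theta(\omega_1,X_1)-\Theta(\omega_2,X_2)\|\le\|\Pi(\omega_1,X_1)-\Pi(\omega_2,X_2)\|
\end{equation*}
immediately yields local Lipschitz continuity of $\Theta$ near $(0,\overline{X})$.

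The argument is essentially a corollary of the Cartesian product version of the Lipschitz theorem, so there is no genuine obstacle; the only point requiring care is to confirm that $\pi$ fits exactly into the framework of Definition~\ref{def:mixed_symmetric} (with the sign symmetry in the first block being automatic because $\theta(\omega,x)=\theta(-\omega,x)$ by convention, and the mixed symmetry in the second block inherited from that of $\theta(\omega,\cdot)$). Once that is in place, the result follows directly.
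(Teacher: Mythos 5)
Your proposal is correct and follows essentially the same route the paper intends: the paper introduces $\pi(\omega,x)=(\omega,\theta(\omega,x))$ and $\Pi(\omega,X)=(\omega,\Theta(\omega,X))$ precisely so that the smoothing results become special cases of the Cartesian-product theorems, and Theorem~\ref{thm:Lip-spectral-op-block} applied to $\Pi$ (using the even extension $\theta(\omega,x)=\theta(-\omega,x)$ to get the $\pm\mathbb{P}^1$ symmetry, as you note) gives the claim. Your extraction of $\Theta$ as the second component of $\Pi$ is exactly the intended argument, so nothing further is needed.
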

}

The following theorem is on the $\rho$-order B-differentiability ($0<\rho\leq 1$) of the smoothing spectral operator $\Theta$ at $(0,\overline{X})$.
\begin{theorem}\label{thm:rho-order-B-diff-smoothing-spectral}
	Let $\overline{X}\in{\cal N}$ and $0<\rho\leq 1$ be given. If the smoothing approximation $\theta$ of $g$ is locally Lipschitz continuous near $(0,\overline{\sigma})$ and $\rho$-order B-differentiable at $(0,\overline{\sigma})$, then the smoothing spectral operator $\Theta$ is $\rho$-order B-differentiable at $(0,\overline{X})$.
\end{theorem}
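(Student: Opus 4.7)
The plan is to reduce the statement to Theorem \ref{thm:rho-order-B-diff-spectral-op-block}(i), the corresponding result for general spectral operators defined on a Cartesian product of matrix spaces. The key observation, already recorded in the setup of this subsection, is that the mapping $\pi(\omega,x)=(\omega,\theta(\omega,x))$ is mixed symmetric on $\R\times\R^m$ with respect to $\pm\mathbb{P}^{1}\times\pm\mathbb{P}^{m}$, and that the associated spectral operator $\Pi:\V^{1\times 1}\times\V^{m\times n}\to\V^{1\times 1}\times\V^{m\times n}$ takes the form $\Pi(\omega,X)=(\omega,\Theta(\omega,X))$. The even extension convention $\theta(-\omega,\cdot)=\theta(\omega,\cdot)$ is precisely what makes this identification compatible with the spectral operator construction on $\V^{1\times 1}$, whose singular value map returns $|\omega|$.

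First, I would verify that $\pi$ inherits the regularity hypotheses placed on $\theta$. Since the first component $\pi_1(\omega,x)=\omega$ is smooth, both local Lipschitz continuity near $(0,\overline{\sigma})$ and $\rho$-order B-differentiability at $(0,\overline{\sigma})$ for $\pi$ reduce to the same statements for $\pi_2=\theta$, which hold by the assumptions of the theorem. Hence $\pi$ is locally Lipschitz continuous near $\kappa((0,\overline{X}))=(0,\overline{\sigma})$ and $\rho$-order B-differentiable at $(0,\overline{\sigma})$.

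Next, I would invoke Theorem \ref{thm:rho-order-B-diff-spectral-op-block}(i) applied to $\Pi$, which immediately yields that $\Pi$ is $\rho$-order B-differentiable at $(0,\overline{X})$. Since the projection from $\V^{1\times 1}\times\V^{m\times n}$ onto its second factor is a bounded linear map, it preserves $\rho$-order B-differentiability, and therefore $\Theta$, as the second component of $\Pi$, is $\rho$-order B-differentiable at $(0,\overline{X})$, which is the desired conclusion.

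The main obstacle---which is more conceptual than computational---is to confirm that the reduction through $\Pi$ genuinely fits into the Cartesian-product spectral-operator framework of Section \ref{subsec:nonsymmetric}. In particular, one must check that treating the scalar smoothing parameter as an element of $\V^{1\times 1}$ respects the evenness extension of $\theta$ in $\omega$, and that the formal definition of $\Pi(\omega,X)$ as $(\omega,\Theta(\omega,X))$ indeed agrees with the spectral-operator formula applied to $\pi$ (the $\mathrm{sign}(\omega)$ left ``singular vector'' in the $\V^{1\times 1}$ slot restores $\omega$ from $|\omega|$, and the evenness of $\theta$ gives $\theta(|\omega|,\sigma(X))=\theta(\omega,\sigma(X))$). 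Once this consistency check is in place, the argument goes through without further difficulty, since all of the heavy lifting has already been carried out in Theorem \ref{thm:rho-order-B-diff-spectral-op-block}.
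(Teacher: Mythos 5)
Your reduction is exactly the route the paper intends: Theorem \ref{thm:rho-order-B-diff-smoothing-spectral} is presented there as a special case of Theorem \ref{thm:rho-order-B-diff-spectral-op-block}(i) via the mixed symmetric mapping $\pi(\omega,x)=(\omega,\theta(\omega,x))$ and the spectral operator $\Pi(\omega,X)=(\omega,\Theta(\omega,X))$ on $\V^{1\times 1}\times\V^{m\times n}$, with the evenness convention $\theta(-\omega,\cdot)=\theta(\omega,\cdot)$ ensuring consistency with the singular value $|\omega|$. Your verification that the hypotheses transfer to $\pi$ and that the second-component projection preserves $\rho$-order B-differentiability is correct, so the proposal is sound and essentially identical to the paper's argument.
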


Suppose that the smoothing approximation $\theta$ of $g$ is locally Lipschitz continuous near $(0,\sigma(\overline{X}))$. Then,   {by Theorem \ref{thm:Lip-smoothing-spectral}},  the smoothing spectral operator $\Theta$ is also locally Lipschitz continuous near $\overline{X}$. Moreover, we have  the following results on the G-semismoothness of the smoothing spectral operator $\Theta$ at $(0,\overline{X})$.

\begin{theorem}
	Let $\overline{X}\in {\cal N}$ be given. Suppose that the smoothing approximation $\theta$ of $g$ is $\rho$-order G-semismooth ($0<\rho\le 1$) at $(0,\sigma(\overline{X}))$. Then, the corresponding smoothing spectral operator $\Theta$ is $\rho$-order G-semismooth at $(0,\overline{X})$.
\end{theorem}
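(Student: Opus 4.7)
The plan is to reduce the claim to Theorem~\ref{thm:rho-order-G-semismooth-spectral-op-block} applied to the auxiliary spectral operator $\Pi$ and its underlying mixed symmetric map $\pi(\omega,x)=(\omega,\theta(\omega,x))$ introduced right before the statement. Since $\Pi(\omega,X)=(\omega,\Theta(\omega,X))$ is by construction the spectral operator on the Cartesian product $\mathbb{V}^{1\times 1}\times\mathbb{V}^{m\times n}$ with respect to $\pi$, and $\kappa(0,\overline{X})=(0,\overline{\sigma})$, the argument has three pieces: (i) upgrade $\rho$-order G-semismoothness of $\theta$ to $\rho$-order G-semismoothness of $\pi$; (ii) invoke Theorem~\ref{thm:rho-order-G-semismooth-spectral-op-block} to obtain $\rho$-order G-semismoothness of $\Pi$ at $(0,\overline{X})$; (iii) read off the second component to conclude $\rho$-order G-semismoothness of $\Theta$ at $(0,\overline{X})$.

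For step (i), since the first coordinate of $\pi$ is the identity map in $\omega$ (in particular smooth), $\pi$ is differentiable at $(\omega,x)$ if and only if $\theta$ is, and then $\pi'(\omega,x)$ has the block form $\bigl(\begin{smallmatrix}1 & 0\\ W_\omega & W_x\end{smallmatrix}\bigr)$ with $(W_\omega,W_x)=\theta'(\omega,x)$. Passing to limits of derivatives and then taking convex hulls preserves this block structure, so every $V\in\partial\pi(\omega,x)$ has first block row $(1,0)$ and second block row an element of $\partial\theta(\omega,x)$. Recalling that G-semismoothness already presupposes local Lipschitz continuity of $\theta$ near $(0,\overline\sigma)$, applying Lemma~\ref{lem:semismoothness-equiv} componentwise, together with the $\rho$-order G-semismoothness assumption on $\theta$, immediately gives the same property for $\pi$ at $(0,\overline{\sigma})$.

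Step (ii) is then a direct citation: Theorem~\ref{thm:rho-order-G-semismooth-spectral-op-block} applied to the spectral operator $\Pi$ on $\mathbb{V}^{1\times 1}\times\mathbb{V}^{m\times n}$, with mixed symmetric mapping $\pi$ and base point $(0,\overline{X})$, yields that $\Pi$ is $\rho$-order G-semismooth at $(0,\overline{X})$. For step (iii) the identity-coordinate trick runs in reverse at the matrix level: any $W\in\partial\Pi(\omega,X)$ has first block row $(1,0)$ and second block row belonging to $\partial\Theta(\omega,X)$. Writing out the $\rho$-order G-semismoothness inequality for $\Pi$ along a small increment $(h,H)$ and extracting the second block component produces exactly the $\rho$-order G-semismoothness estimate for $\Theta$ at $(0,\overline{X})$, noting that local Lipschitz continuity of $\Theta$ near $(0,\overline{X})$ is guaranteed by Theorem~\ref{thm:Lip-smoothing-spectral}.

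The only subtlety, and the place where I would spell things out carefully, is the bookkeeping that justifies the block-triangular correspondence between $\partial\pi$ and $\partial\theta$, and between $\partial\Pi$ and $\partial\Theta$. This correspondence is the clean statement that augmenting a locally Lipschitz mapping by an identity coordinate preserves both local Lipschitz continuity and $\rho$-order G-semismoothness, with generalized Jacobians related by a transparent $2\times 2$ block form whose first row is constant. Once this elementary reduction is in place, no further analysis of eigenvalues, singular values, or the matrix decomposition \eqref{eq:Y-eig-Z-SVD} is required — all of the substantive work is absorbed by Theorem~\ref{thm:rho-order-G-semismooth-spectral-op-block}.
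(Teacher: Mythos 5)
Your proposal is correct and follows essentially the same route the paper intends: the subsection introduces $\pi(\omega,x)=(\omega,\theta(\omega,x))$ and $\Pi(\omega,X)=(\omega,\Theta(\omega,X))$ precisely so that the smoothing results are obtained as special cases of the product-space theorems, and your reduction via Theorem \ref{thm:rho-order-G-semismooth-spectral-op-block} is exactly that. The identity-coordinate bookkeeping you flag is indeed elementary (the first block of every derivative is constant, so ${\cal D}_\Pi={\cal D}_\Theta$ and the generalized Jacobians correspond exactly; alternatively Lemma \ref{lem:semismoothness-equiv} lets you work only at differentiability points), so there is no gap.
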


Finally, suppose that the smoothing approximation $\theta$ of $g$ is locally Lipschitz continuous near $(0,\overline{\sigma})$ and directionally differentiable at $(0,\overline{\sigma})$. It then follows from Theorem \ref{thm:Lip-smoothing-spectral} and \cite[Theorems 3]{DSSToh18} that the smoothing  spectral operator $\Theta$ is also locally Lipschitz continuous near $(0,\overline{X})$ and directionally differentiable at $(0,\overline{X})$. Furthermore, we have the following results on the characterization of  the Clarke generalized Jacobian of $\Theta$ at $(0,\overline{X})$.

\begin{theorem}\label{thm:B-subdiff-smoothing-spectral}
	Let $\overline{X}\in{\cal N}$ be given. Suppose that there exists an open neighborhood ${\cal B}\subseteq\mathbb{R}\times\hat{\sigma}_{\cal N}$ of $(0,\overline{\sigma})$ such that $\theta$ is differentiable at $(\tau,\sigma)\in{\cal B}$ if and only if $\theta'((0,\overline{\sigma});(\cdot,\cdot))$ is differentiable at $(\tau,\sigma-\overline{\sigma})$. Assume that the function $d:\mathbb{R}\times\mathbb{R}^m\to\mathbb{R}^m$ defined by
	\[
	d(\tau,h):=\theta(\tau,\overline{\sigma}+h)-\theta(0,\overline{\sigma})-\theta'((0,\overline{\sigma});\tau,h),\quad (\tau,h)\in\mathbb{R}\times\mathbb{R}^m
	\] is strictly differentiable at zero. Then, we have
	\[
	\partial_{B}\Theta(0,\overline{X})=\partial_{B}\Psi(0,0)\quad {\rm and} \quad \partial \Theta(0,\overline{X})=\partial\Psi(0,0) ,
	\]
	where $\Psi:=\Theta'((0,\overline{X});(\cdot,\cdot))$ is the directional derivative of $\Theta$ at $(0,\overline{X})$.
\end{theorem}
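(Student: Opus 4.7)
My plan is to deduce Theorem \ref{thm:B-subdiff-smoothing-spectral} from Theorem \ref{thm:B-subdiff-spectral-op-block} by realizing the smoothing spectral operator $\Theta$ as the second component of a genuine (non-smoothing) spectral operator defined on the Cartesian product $\V^{1\times 1}\times \V^{m\times n}$, using the identification $\R\equiv \V^{1\times 1}$. This mirrors exactly the auxiliary construction $\Pi(\omega,X)=(\omega,\Theta(\omega,X))$ introduced in Subsection \ref{section:extension} to legitimize the smoothing spectral operator, and it lets us recycle the structural Clarke-Jacobian machinery rather than repeat the delicate eight-case argument that underlies Theorem \ref{thm:B-subdiff-spectral-op}.

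First I would define $\pi:\R\times\R^m\to\R\times\R^m$ by $\pi(\omega,x):=(\omega,\theta(\omega,x))$ and verify that $\pi$ is mixed symmetric with respect to $\pm\mathbb{P}^1\times\pm\mathbb{P}^m$ on an open neighbourhood of $(0,\overline{\sigma})$ in $\R\times\hat{\sigma}_{\cal N}$. The only non-obvious check is the equivariance under $-1\in\pm\mathbb{P}^1$, which is absorbed by the paper's own evenness convention $\theta(-\omega,\cdot)=\theta(\omega,\cdot)$; the first coordinate is linear in $\omega$, so $\pi_1(Q_1\omega,Q_2 x)=Q_1\pi_1(\omega,x)$ is trivial. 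The associated spectral operator on $\V^{1\times 1}\times \V^{m\times n}$ is then precisely $\Pi(\omega,X)=(\omega,\Theta(\omega,X))$, as the formal singular-value decomposition $\omega=\mathrm{sign}(\omega)\cdot|\omega|$ combined with $\theta(|\omega|,\cdot)=\theta(\omega,\cdot)$ reproduces $\omega$ in the first block and $\Theta(\omega,X)$ in the second.

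Next I would translate the hypotheses of Theorem \ref{thm:B-subdiff-smoothing-spectral} into those of Theorem \ref{thm:B-subdiff-spectral-op-block} applied at $\overline{\kappa}=(0,\overline{\sigma})$. Local Lipschitz continuity of $\pi$ near $\overline{\kappa}$ and directional differentiability at $\overline{\kappa}$ follow directly from the corresponding properties of $\theta$. The differentiability-equivalence assumption for $\pi$ reduces to the assumed one for $\theta$, because the first coordinate contributes an identity block to every Jacobian, so $\pi$ is differentiable at $(\tau,\sigma)$ iff $\theta$ is, and $\pi'(\overline{\kappa};\cdot)$ is differentiable at $(\tau,\sigma-\overline{\sigma})$ iff $\theta'((0,\overline{\sigma});\cdot)$ is. The remainder function for $\pi$ reads $d_\pi(\tau,h)=(0,d(\tau,h))$, so its strict differentiability at the origin is equivalent to the assumed strict differentiability of $d$. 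Theorem \ref{thm:B-subdiff-spectral-op-block} then delivers $\partial_B\Pi(0,\overline{X})=\partial_B\Psi_\Pi(0,0)$ and $\partial\Pi(0,\overline{X})=\partial\Psi_\Pi(0,0)$, where $\Psi_\Pi:=\Pi'((0,\overline{X});(\cdot,\cdot))$.

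Finally I would project onto the $\Theta$-component. Since $\Pi=(\mathrm{id},\Theta)$, the map $\Pi$ is Fr\'echet-differentiable at $(\omega,X)$ if and only if $\Theta$ is, and every Jacobian of $\Pi$ has the block-triangular form $\begin{pmatrix}1 & 0\\ A & B\end{pmatrix}$ with $(A,B)$ the corresponding Jacobian of $\Theta$; likewise $\Psi_\Pi(\tau,H)=(\tau,\Psi(\tau,H))$. This sets up an affine bijection between $\partial_B\Pi(0,\overline{X})$ and $\partial_B\Theta(0,\overline{X})$, and between $\partial_B\Psi_\Pi(0,0)$ and $\partial_B\Psi(0,0)$, which is compatible with both limits and convex hulls; hence the identities for $\Pi$ immediately pass to $\Theta$. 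The main obstacle is Step 1/Step 5, namely confirming that the identification $\Pi=(\mathrm{id},\Theta)$ is legitimate as a spectral operator in the sense of Definition \ref{def:def-spectral-op} (the singular-value machinery formally sees $|\omega|$, not $\omega$), and that the block-triangular projection commutes cleanly with the $\partial_B$ and $\partial$ operations; once these are pinned down, everything else is a direct invocation of Theorem \ref{thm:B-subdiff-spectral-op-block}.
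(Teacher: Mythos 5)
Your proposal is correct and follows exactly the route the paper intends: Section \ref{section:extension} introduces $\Pi(\omega,X)=(\omega,\Theta(\omega,X))$ as the spectral operator on $\V^{1\times 1}\times\V^{m\times n}$ associated with the mixed symmetric mapping $\pi(\omega,x)=(\omega,\theta(\omega,x))$, and the smoothing results, including Theorem \ref{thm:B-subdiff-smoothing-spectral}, are obtained precisely as special cases of Theorem \ref{thm:B-subdiff-spectral-op-block} applied at $\overline{\kappa}=(0,\overline{\sigma})$. Your verification of the evenness convention $\theta(-\omega,\cdot)=\theta(\omega,\cdot)$, the identity $d_\pi(\tau,h)=(0,d(\tau,h))$, and the block-triangular projection onto the $\Theta$-component correctly fills in the details the paper omits.
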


\section{Conclusions}\label{sect:remarks}
In this paper, we conduct extensive studies  on spectral operators   initiated in  \cite{DSSToh18}. Several fundamental first and second-order properties of spectral operators, including the
locally Lipschitz continuity,   $\rho$-order B(ouligand)-differentiability ($0<\rho\leq 1$),   $\rho$-order G-semismooth ($0<\rho\leq 1$) and the characterization of Clarke's generalized Jacobian are systematically studied.
These results, together with the results obtained in \cite{DSSToh18} provide the necessary theoretical foundations for both the computational and
theoretical aspects of many applications. In particular,  based on the recent exciting progress made in solving  {large scale} SDP problems, we believe  {that} the properties of the spectral operators  {studied here}, such as the semismoothness and the characterization of Clarke's generalized Jacobian, constitute the backbone for future developments on both designing some efficient numerical methods for solving large-scale MOPs and  conducting second-order variational analysis of the general MOPs. The work done on spectral operators of matrices is by no means complete. Due to the rapid advances in the applications of matrix optimization in different fields, spectral operators of matrices will become even more important and many other properties of spectral operators are waiting to be explored.




\end{document}